\documentclass[preprint,12pt]{elsarticle}

\usepackage{amssymb}
\usepackage{amsmath}
\usepackage{amsthm}
\usepackage{mathrsfs}

\usepackage{tikz}
\usetikzlibrary{calc}

\newcommand{\K}{\mathbb{K}}
\newcommand{\F}{\mathbb{F}}

\newcommand{\Points}{\mathcal{P}}
\newcommand{\Lines}{\mathcal{L}}
\newcommand{\I}{\mathtt{I}}

\newcommand{\SCHex}{\mathsf{H}(q)}

\newcommand{\TT}{\mathsf{T}(q^3,q)}

\newcommand{\T}{\mathsf{T}}

\newcommand{\Qk}{\mathsf{Q}^+(7,\K)}
\newcommand{\Qq}{\mathsf{Q}^+(7,q^3)}
\newcommand{\Q}{\mathsf{Q}}

\newcommand{\PG}{\mathsf{PG}(7,q^3)}
\newcommand{\PGk}{\mathsf{PG}(7,\K)}

\newcommand{\Lt}{\mathcal{L}^{\mathsf{T}}}
\newcommand{\Pt}{\mathcal{P}^{\mathsf{T}}}

\newcommand{\polarity}{\theta}
\newcommand{\triality}{\tau}

\newtheorem{theorem}{Theorem}
\newtheorem{definition}{Definition}
\newtheorem{corollary}{Corollary}
\newtheorem{lemma}{Lemma}
\theoremstyle{remark}
\newtheorem{remark}{Remark}
\newtheorem{example}{Example}

\journal{Discrete Mathematics}

\begin{document}

\begin{frontmatter}

\title{Intersection numbers of the natural embedding of the twisted triality hexagon \(\TT\) in \(\PG\)}

 \author[UC]{Sebastian Petit\corref{cor1}}
 \ead{sebastian.petit@pg.canterbury.ac.nz}
 \cortext[cor1]{Corresponding author}
 
 \author[UC]{Geertrui Van de Voorde}
 \ead{geertrui.vandevoorde@canterbury.ac.nz}
 
 \affiliation[UC]{organization={School of Mathematics and Statistics, University of Canterbury},
             addressline={Private Bag 4800},
             city={Christchurch},
             postcode={8140},
             state={Canterbury},
             country={New Zealand}}
 
\begin{abstract}
	In this paper, we study and characterise the natural embedding of the twisted triality hexagon \(\TT\) in \(\PG\). We begin by describing the possible intersections of subspaces of \(\PG\) with \(\TT\). Then, we provide conditions on a set of lines \(\Lines\) which ensure that \(\Lines\) forms the line set of a naturally embedded twisted triality hexagon.
	This work follows up on similar results for the split Cayley hexagon by J. A. Thas and H. Van Maldeghem (2008) and F. Ihringer (2014).
\end{abstract}



\begin{keyword}
twisted triality hexagon \sep split Cayley hexagon \sep projective embedding \sep generalised polygon

\MSC[2020] 51E12 \sep 51E20
\end{keyword}

\end{frontmatter}

\section{Introduction}	

\emph{Generalised hexagons}, and the overarching \emph{generalised polygons}, were introduced in 1959 by Jacques Tits \cite{Tits1959}. 
Since then, these highly symmetrical structures have been studied intensely for their intriguing properties.
The monograph \cite{VanMaldeghem1998} describes the theory of generalised polygons and the background in more detail.
In the finite case, Feit and Higman have shown that thick generalised \(n\)-gons only exists for \(n\in \{3,4,6,8\}\) \cite{FeitHigman1964}.
Almost by definition, generalised triangles are equivalent to \emph{axiomatic projective planes} and hence have an even longer history.
Similarly, generalised quadrangles have their own extensive literature.

We focus on generalised hexagons.
Up to duality, only two classes of finite thick generalised hexagons are known: the \emph{split Cayley hexagons} of order \((q,q)\) and the \emph{twisted triality hexagons} of order \((q^3,q)\). In \cite{ThasVanMaldeghem2008}, Thas and Van Maldeghem characterised the natural embedding of the split Cayley hexagon in \(\text{PG}(6,q)\) using the intersection numbers with projective subspaces. Later, this was refined by Ihringer in \cite{Ihringer2014}.

This paper contributes to this study by characterising the natural embedding of the twisted triality hexagon \(\TT\) in \(\text{PG}(7,q^3)\). In accordance with \cite{ThasVanMaldeghem2008} and \cite{Ihringer2014} we focus on the set of lines of the generalised hexagon. 
After some introduction (Section \ref{sec:Background}), we first describe in detail (Section \ref{sec:EmbeddedTProp}) how subspaces of the projective space can intersect with the set of lines of a naturally embedded twisted triality hexagon. Then (Section \ref{sec:LineSet}), we start from a set of lines satisfying a number of these properties and show it to be the set of lines of a naturally embedded twisted triality hexagon. 
As a crucial tool, we use items (i) and (ii) from the main result of \cite{ThasVanMaldeghem1998}:
\begin{theorem}[Thas \& Van Maldeghem \cite{ThasVanMaldeghem1998}]\label{thm:KeyTheorem}
	\,
	\begin{enumerate}
		\item  If \(\Gamma\) is a thick generalized hexagon of order \((s,t)\) regularly lax embedded in \(\mathsf{PG}(d,p^h)\),
		then \(d \in \{5,6,7\}\), \(\Gamma\) is a classical generalized hexagon, and there exists a subgeometry
		\(\mathsf{PG}(d, s)\) over the subfield \(\text{GF}(s)\) of \(\text{GF}(p^h)\) such that \(\Gamma\) is naturally embedded in
		\(\mathsf{PG}(d, s)\).
		\item If a thick generalized hexagon \(\Gamma\) of order \((s, t)\) is flatly and fully embedded in \(\mathsf{PG}(d, s)\), then \(d\in \{4,5,6,7\}\) and \(t\le s\). Also, if \(d = 7\), then \(\Gamma\cong \T(s,\sqrt[3]{s})\) and the embedding is natural. If \(d = 6\) and \(t^5 > s^3\), then \(\Gamma\cong \mathsf{H}(s)\) and the embedding is natural. If \(d = 5\) and \(s=t\), then \(\Gamma\cong \mathsf{H}(s)\) with \(s\) even and the embedding is natural.
	\end{enumerate}	
\end{theorem}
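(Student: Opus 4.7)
The plan is to prove items (i) and (ii) separately, combining dimension counts for the embedding with the classification of Moufang hexagons due to Tits.

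For (i), I would start by exploiting regularity to control how pairs of opposite elements sit inside the ambient \(\mathsf{PG}(d,q)\). A regular lax embedding means that lines of \(\Gamma\) are projective lines, but the hexagon points need not generate the full ambient space; regularity then forces the sets of traces through opposite points (or lines) to span only small projective flats. A careful count of the lines through a fixed point, combined with the thickness assumption, should yield the upper bound \(d\le 7\), while \(d\ge 5\) follows from the minimal number of linearly independent lines needed to accommodate a thick hexagon. Next, I would reconstruct enough algebraic structure --- effectively the Moufang condition --- from the many projective reguli forced by regularity in the embedded configuration; Tits' classification then identifies \(\Gamma\) as one of \(\SCH(q)\), \(\SCHexD\), \(\TT\) or \(\TTD\). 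Finally, to locate the subfield I would examine cross-ratios along a fixed line of \(\Gamma\) coming from traces of distance-2 configurations; these must all lie in a common subfield of \(\text{GF}(q)\), and the smallest such subfield is exactly \(\text{GF}(s)\), producing the Baer-type subgeometry \(\mathsf{PG}(d,s)\) that carries the natural embedding.

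For (ii), under a flat and full embedding in \(\mathsf{PG}(d,s)\) the \(t+1\) lines through any point \(p\) of \(\Gamma\) span a projective flat whose dimension is tightly controlled by the flatness axiom. A local-to-global parameter count pins \(d\) into \(\{4,5,6,7\}\), while \(t\le s\) follows by comparing the number \(s+1\) of hexagon points forced onto each line by fullness with the hexagon's point-line incidence counts. For each admissible \(d\) the induced polar-space structure can then be identified: \(d=7\) should give a hyperbolic quadric \(\Q^+(7,s)\) admitting a triality, forcing \(\Gamma\cong\T(s,\sqrt[3]{s})\); \(d=6\) should yield \(\Q(6,s)\) carrying the split Cayley hexagon \(\SCH(s)\) whenever the inequality \(t^5>s^3\) excludes the residual possibilities; and \(d=5\) together with \(s=t\) forces a symplectic polar space \(\mathsf{W}(5,s)\) with \(s\) even, again accommodating \(\SCH(s)\) via its characteristic-2 embedding. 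In each case the identification is rigid once the ambient polar space is recognised, because a flat full embedding of a Moufang hexagon is determined by the polar geometry it respects.

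The main obstacle will be the Moufang reconstruction step in part (i): passing from the purely incidence-geometric regularity of \(\Gamma\) inside \(\mathsf{PG}(d,q)\) to the algebraic Moufang condition needed to invoke the classification. Historically this step requires a careful coordinatisation via hexagonal sexternary rings that is compatible with the projective coordinates, and it is arguably the deepest technical component of the theorem. A secondary challenge, in part (ii), is ruling out the low-dimensional exceptional cases and establishing the sharp order inequalities (in particular \(t^5>s^3\) when \(d=6\)); these require a delicate counting argument blending Haemers--Roos-type bounds on generalised hexagons with the specific geometry imposed by the full embedding.
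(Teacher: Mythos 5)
This statement is not proved in the paper at all: it is Theorem~1 of Thas and Van Maldeghem, imported verbatim from \cite{ThasVanMaldeghem1998} and used purely as a black-box tool (the paper explicitly says ``As a crucial tool, we use items (i) and (ii) from the main result of \cite{ThasVanMaldeghem1998}''). So there is no internal proof to compare your attempt against; the only fair benchmark is the original paper of Thas and Van Maldeghem, which devotes an entire research article to this result.

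Measured against that, your proposal is a plausible road map but not a proof, and the gaps are exactly at the load-bearing points. In part (i) you explicitly defer the ``Moufang reconstruction step'' --- passing from regularity of the embedded configuration to the Moufang condition so that Tits' classification applies --- and you acknowledge it is ``arguably the deepest technical component''; deferring it means the argument never gets off the ground, since without it neither the classification of \(\Gamma\) nor the subfield/cross-ratio argument locating \(\mathsf{PG}(d,s)\) can be carried out. (In fact the published proof does not reconstruct the Moufang condition abstractly; it works directly with the embedding, building the polarised structure and reducing to known full embeddings, so your route is also not the route that is known to work.) In part (ii) the assertions that each admissible \(d\) ``should give'' a specific polar space (\(\Q^+(7,s)\), \(\Q(6,s)\), \(\mathsf{W}(5,s)\)) are precisely the conclusions to be proved, not steps; recognising that the point set of a flatly and fully embedded hexagon lies on such a quadric or symplectic space is the substance of the theorem. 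You also misplace the inequality \(t^5 > s^3\): in the statement it is a \emph{hypothesis} used to force \(\Gamma\cong\mathsf{H}(s)\) when \(d=6\), not a bound one must establish, so there is no ``delicate counting argument'' needed to derive it. As it stands, the proposal names the destinations of the hard steps without supplying any of them, and so does not constitute a proof of the theorem.
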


\subsection*{Statement of the main results}

Let \(\Lines\) be a set of lines of \(\PG\).

\begin{definition}\label{def:LSupported}
	An \(n\)-dimensional subspace \(U\) of \(\PG\) is \emph{\(\Lines\)-supported} if the set of all lines of \(\Lines\) in \(U\) spans the space \(U\).
\end{definition}

We consider the following properties:
\begin{itemize}
	\item \emph{(Pt)} Every point of \(\PG\) is incident with \(0\) or \(q+1\) elements of \(\Lines\). 
	\item \emph{(Pl)} Every plane of \(\PG\) contains \(0\), \(1\) or \(q+1\) elements of \(\Lines\).
	\item \emph{(Sd)} Every solid of \(\PG\) contains \(0, 1, q+1\) or \(2q+1\) elements of \(\Lines\).
	\item \emph{(4d)} Every \(\Lines\)-supported \(4\)-space of \(\PG\) contains \(q^2+q+1\) or \(q^2+2q+1\) elements of \(\Lines\).
	\item \emph{(5d)} Every \(\Lines\)-supported \(5\)-space of \(\PG\) contains \(q^3+1\), \(q^3+q^2+q+1\), \(q^3+2q^2+2q+1\) or \(q^4+q+1\) elements of \(\Lines\).
	\item \emph{(6d)} Every \(\Lines\)-supported \(6\)-space of \(\PG\) contains \(q^5+q^4+q+1\) or \(q^5+q^4+q^3+q^2+q+1\) elements of \(\Lines\).
	\item \emph{(To)} \(|\Lines| \le q^9+q^8+q^5+q^4+q+1\).
\end{itemize}

We also consider the following variation:
\begin{itemize}
	\item \emph{(4d')} Every \(4\)-space of \(\PG\) contains at most \(q^2+2q+1\) elements of \(\Lines\).
\end{itemize}

\begin{theorem}\label{thm:Main1:SatisfiesProperties}
	The line set \(\Lines\) of a regularly embedded twisted triality hexagon \(\TT\) in \(\PG\) satisfies the properties (Pt), (Pl), (Sd), (4d), (4d'), (5d), (6d) and (To).
\end{theorem}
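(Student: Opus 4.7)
The approach is to treat the eight properties one by one, using the structure of the natural regular embedding: \(\TT\) arises from a triality \(\triality\) on the hyperbolic quadric \(\Qq\) in \(\PG\), its points are the absolute points of \(\triality\), its lines are certain lines on \(\Qq\), every point is on exactly \(q+1\) lines of \(\Lines\), and by regularity these \(q+1\) lines are coplanar. A standard incidence count for the hexagon of order \((q^3,q)\) gives \(|\Lines|=(q+1)(q^8+q^4+1)=q^9+q^8+q^5+q^4+q+1\), which proves (To) (with equality), and (Pt) is immediate from the order.

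For (Pl), two lines of \(\Lines\) through a common point \(p\) lie in the plane spanned by the full pencil at \(p\), so a plane containing at least two lines of \(\Lines\) contains at least \(q+1\); any further line of \(\Lines\) in that plane would meet each pencil line in a distinct point, producing a triangle, which is forbidden in a hexagon. For each of (Sd), (4d), (5d) and (6d) the plan is to enumerate the possible \(\Lines\)-supported subspaces of the given dimension and to count the lines in each case. The building blocks are: pencils at a point (plane, \(q+1\) lines); spans of two opposite lines together with the regulus of \(q+1\) transversals (solid, \(2q+1\) lines); point- and line-traces (\(4\)-spaces carrying a projective-plane-like set of \(q^2+q+1\) lines); full grids of two concurrent pencils (\(4\)-spaces with \((q+1)^2\) lines); and naturally embedded subhexagons of \(\TT\) isomorphic to \(\SCHex\), which span \(6\)-spaces carrying \(q^5+q^4+q^3+q^2+q+1\) lines. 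Matching each configuration to its dimension will recover the listed counts. Property (4d') then follows from (4d) together with (Pl) and (Sd): a \(4\)-space that is not \(\Lines\)-supported has its lines lying in a proper subspace, hence at most \(2q+1\le (q+1)^2\) lines.

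The main obstacle is the classification of \(\Lines\)-supported \(5\)- and \(6\)-spaces, where several genuinely different substructures of \(\TT\) can have spans of the same dimension yet distinct line counts. Most of the work will consist of separating, for example, the span of a trace from the span of an opposite-pair regulus and from the span of an \(\SCHex\)-subhexagon, and in ruling out spurious counts outside the listed values. I expect to lean on the incidence-geometric properties of \(\TT\) (no triangles, quadrangles or pentagons; the regulus behaviour of pairs of opposite lines; and the nested \(\SCHex\)-subhexagons), together with the geometry of \(\triality\) on \(\Qq\), rather than on extensive coordinate computation.
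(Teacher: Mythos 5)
Your handling of (Pt), (Pl) and (To) is correct and essentially identical to the paper's, and your derivation of (4d') from (4d) plus (Sd) matches the paper's Corollary following its classification of \(4\)-spaces. However, for (Sd), (4d), (5d) and (6d) your proposal only announces the classification of \(\Lines\)-supported subspaces rather than giving one, and the list of building blocks you propose to match against is both partly wrong and incomplete. A solid spanned by two opposite lines \(L,M\) of \(\TT\) contains exactly the \(q+1\) mutually opposite lines of the regulus \(R(L,M)\) (opposite lines have no transversal lines in a hexagon), whereas the count \(2q+1\) in (Sd) arises from a different configuration: two pencils of lines at collinear points of \(\TT\), sharing the line joining those points. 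More seriously, none of your building blocks yields the counts \(q^3+1\), \(q^3+2q^2+2q+1\) or \(q^4+q+1\) of (5d); in the paper these come respectively from hermitian spreads inside split Cayley subhexagons, from subhexagons of order \((1,q)\), and from the polar space \(L^\polarity\) of a line (which meets \(\TT\) exactly in \(\Lt_{\le 2}(L)\)). The paper's route to these runs through the polarity \(\polarity\) (bounding which lines a subspace can contain and excluding isolated points), through constructive lemmas showing that a \(5\)-space containing a hexagon meets \(\TT\) in a \((1,q)\)-subhexagon and a \(6\)-space containing a heptagon meets \(\TT\) in a hexagon of order \((q,q)\), and through the external Theorem of Thas and Van Maldeghem (1998) together with their 2008 intersection results to recognise that order-\((q,q)\) hexagon as a \emph{naturally embedded} \(\SCHex\) — which is what settles the hermitian-spread case of (5d). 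None of these tools appears in your plan.

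The second concrete gap is your stated intention to avoid coordinate computation. The paper's exact counts all rest on a lemma proved by explicit computation with the trilinear form: if three lines of \(\Lt\) meet a common line \(M\) of \(\Lt\) in distinct points, then they span a \(4\)-space containing exactly \(q^2+q\) lines of \(\Lt\) meeting \(M\); equivalently, the pencil-planes through \(M\) lying in a fixed \(4\)-space are parametrized by the subfield \(\F_q\) (the condition \(\lambda^\sigma=\lambda\) in the computation). This is a property of the embedding — of the triality automorphism \(\sigma\) — and not of the abstract hexagon: the incidence axioms you list (no short \(k\)-gons, regulus behaviour, nested \(\SCHex\)-subhexagons) cannot rule out a line of \(\Lt\) carrying more than \(q+1\) ideal points of a \(4\)-space, and without that bound none of the counts \(q^2+q+1\), \((q+1)^2\), \(q^3+q^2+q+1\), \(q^5+q^4+q+1\), etc.\ can be pinned down. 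So the plan, as stated, is missing the one ingredient that makes the enumeration close up; some substitute for this subfield computation (coordinates, or a specific geometric fact about \(\Qq\) and \(\triality\)) is indispensable.
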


\begin{theorem}\label{thm:Main1.5:4dIsTwistedTriality}
	Let \(\Lines\) be a set of lines of \(\PG\). If \(\Lines\) satisfies the properties (Pt), (Pl), (Sd), (4d') and (To), then \(\Lines\) is the line set of a regularly embedded twisted triality hexagon \(\TT\) in \(\PG\).
\end{theorem}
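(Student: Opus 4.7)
The plan is to show that $\Lines$ is the line set of a thick generalized hexagon of order $(q^3, q)$ flatly and fully embedded in $\PG$, and then apply Theorem~\ref{thm:KeyTheorem}(ii) with $d = 7$ and $s = q^3$ to conclude that this hexagon is isomorphic to $\T(q^3, q) = \TT$ and that the embedding is natural. Fullness is immediate from (Pt): every point on a line of $\Lines$ lies on $q+1$ such lines, so every projective point of a line $L \in \Lines$ is a hexagon point.

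For flatness, I would fix a point $p$ incident with three lines $L_1, L_2, L_3 \in \Lines$ and show $L_3 \subset \langle L_1, L_2 \rangle$. Otherwise the solid $\sigma = \langle L_1, L_2, L_3 \rangle$ contains the three planes $\langle L_i, L_j \rangle$, each with $q+1$ lines of $\Lines$ by (Pl). Since the pairwise intersections of these planes reduce to the $L_i$, inclusion--exclusion yields at least $3q$ lines of $\Lines$ in $\sigma$, contradicting (Sd) when $q \geq 2$. A corollary of this and (Pl) is that any plane containing at least two lines of $\Lines$ is a \emph{pencil plane}: all $q+1$ of its lines in $\Lines$ are concurrent at their common intersection point.

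With flatness in hand, I would rule out short circuits in the incidence graph of $\Lines$. A triangle of lines would force three distinct vertices to each be the concurrency point of one and the same pencil plane, which is impossible. A $4$-cycle cannot be coplanar (since no triangle exists) and so spans a solid containing the four vertex pencils, whose lines are almost disjoint and yield at least $4q$ lines of $\Lines$ in the solid, again contradicting (Sd). A $5$-cycle spans a $4$-space $W$ whose five vertex pencils produce at least $5q$ lines of $\Lines$, which must eventually exceed the bound $q^2 + 2q + 1$ of (4d').

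Once girth at least $12$ is secured, $\Lines$ has the local parameters of a generalized hexagon of order $(q^3, q)$. A standard double count, sharpened by (To) which equals the number of lines $(q+1)(q^8+q^4+1)$ of $\TT$, forces $|\Lines|$ to attain this bound and the incidence graph to be connected of diameter $6$; hence $\Lines$ underlies a thick generalized hexagon, flatly and fully embedded in $\mathsf{PG}(7, q^3)$. Theorem~\ref{thm:KeyTheorem}(ii) with $d = 7$ and $s = q^3$ then yields the natural embedding of $\TT$ and completes the proof. The main obstacle is the pentagon step: for $q \geq 3$, the naive $5q$ lower bound from the vertex pencils does not alone exceed $q^2 + 2q + 1$, so a finer analysis of the mutual position of the five pencils inside $W$, together with lines of $\Lines$ in $W$ forced through non-vertex points on the $L_i$, is required.
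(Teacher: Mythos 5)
Your overall strategy coincides with the paper's: use (Pt), (Pl), (Sd) to exclude triangles and quadrangles, use (4d') to exclude pentagons, then count lines at distance \(1,3,5\) from a fixed point, invoke (To) to force equality and diameter \(6\) of the incidence graph, and finish with Theorem~\ref{thm:KeyTheorem}(2) for \(d=7\), \(s=q^3\). Your flatness/pencil-plane argument (three planes pairwise meeting in the \(L_i\), giving \(3q>2q+1\) lines in a solid) and your \(4\)-cycle count (\(4q>2q+1\)) are correct minor variants of Lemmas~\ref{lem:noTriangles} and~\ref{lem:NoQuad}.

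However, there is a genuine gap, and you flag it yourself: the pentagon step. The bound of \(5q\) lines coming from the five vertex pencils does not exceed \(q^2+2q+1\) once \(q\ge 3\), so girth at least \(12\) is not established, and everything after it (the counting, the identification as a generalised hexagon, the appeal to Theorem~\ref{thm:KeyTheorem}) collapses. This is not a routine detail: the pentagon exclusion is the only place where (4d') enters, so it is the crux of the theorem. The paper's Lemma~\ref{lem:4dNoPenta} closes it with a count that is quadratic rather than linear in \(q\): given a pentagon \(L_1,\dots,L_5\), set \(\Sigma=\langle L_1,L_2,L_3\rangle\) and \(z=L_4\cap L_5\); then \(z\notin\Sigma\) and the plane \(\pi=\langle L_4,L_5\rangle\) carries a pencil of \(q+1\) lines of \(\Lines\) through \(z\). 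One considers the \(q+1\) solids \(\langle L_2,M\rangle\) with \(M\) in this pencil: they all lie in the \(4\)-space \(\langle\Sigma,z\rangle\); they pairwise meet in the plane \(\langle L_2,z\rangle\), which contains only the line \(L_2\) of \(\Lines\); each contains at least \(q+1\) lines of \(\Lines\) by (Sd); and the two solids \(\langle L_2,L_4\rangle\) and \(\langle L_2,L_5\rangle\) contain at least \(2q+1\) lines each by Lemma~\ref{lem:ThreeLines} (using \(L_3\), respectively \(L_1\), as the connecting line). Summing over this bundle of solids gives at least \(2\cdot 2q+(q-1)\cdot q+1=q^2+3q+1\) lines of \(\Lines\) in a single \(4\)-space, contradicting (4d'). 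The idea your sketch is missing is precisely this: the gain comes not from the pentagon's vertex pencils but from a pencil of solids sharing a plane that is poor in lines of \(\Lines\). (A smaller point: to apply Corollary~\ref{cor:incidenceGraph} one needs girth exactly \(12\), so, as the paper does, you should also exhibit a \(12\)-cycle rather than only bound the girth from below.)
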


In proving these results the following definitions will be useful (where we again assume that \(\Lines\) is a set of lines of \(\PG\)):
\begin{definition}\label{def:IsolatedAndIdeal}
	Let \(U\) be a subspace of \(\PG\).
	Given a point \(x\) of \(U\) on at least one line of \(\Lines\), we say that:
	\begin{itemize}
		\item \(x\) is \emph{\(\Lines\)-isolated} in \(U\) if no line of \(\Lines\) through \(x\) is contained in \(U\);
		\item \(x\) is \emph{\(\Lines\)-ideal} in \(U\) if all lines of \(\Lines\) through \(x\) are contained in \(U\).
	\end{itemize}
	If \(\Lines\) is clear from the context, we just write \emph{isolated} and \emph{ideal}.
\end{definition}

\begin{remark}
	Assume that we have the setting of Definition \ref{def:IsolatedAndIdeal}. If \(\Lines\) satisfies (Pt) and (Pl), then there are only three options for the point \(x\): either \(x\) is \(\Lines\)-isolated, \(x\) is \(\Lines\)-ideal or there is a unique line of \(\Lines\) through \(x\) in \(U\).
\end{remark}

\begin{remark}\label{rem:IsolatedAndIdealPoints}
	Whether or not a point is isolated (or ideal) depends on the subspace in which we consider the point. Every point on a line of \(\Lines\) is isolated in the subspace containing just that point and no point is isolated in the full projective space. It is trivial to see that if a point \(x\) is isolated in a subspace \(U\) then \(x\) is also isolated in all subspaces of \(U\) containing \(x\). Similarly, if \(x\) is ideal in a subspace \(U\) then \(x\) is also ideal in all subspaces which contain \(U\).
\end{remark}

\begin{lemma}\label{lem:IsolatedPointsInSandwichedSubspace}
	Let \(U, V\) and \(W\) be subspaces of \(\PG\) such that \(W\subseteq V \subseteq U\).
	If every line of \(\Lines\) in \(U\) intersects \(W\), then any isolated point of \(V\) is either isolated in \(U\) or isolated in \(W\) (for which it needs to be contained in \(W\)).
	
	In particular, if every line of \(\Lines\) in \(U\) intersects \(W\) and neither \(U\) nor \(W\) contains any isolated points, then \(V\) does not contain any isolated points.
\end{lemma}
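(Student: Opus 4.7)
The plan is to prove the first statement directly from the definition, and derive the ``in particular'' clause as an immediate corollary. Let \(x\) be an isolated point of \(V\); I want to show that either \(x\) is isolated in \(U\), or else \(x\in W\) and \(x\) is isolated in \(W\). So suppose \(x\) is \emph{not} isolated in \(U\); by definition there is some line \(\ell\in\Lines\) through \(x\) with \(\ell\subseteq U\). The goal then reduces to showing \(x\in W\) and that no line of \(\Lines\) through \(x\) lies in \(W\).

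The crux of the argument is the following observation. Since \(\ell\) is a line of \(\Lines\) contained in \(U\), the hypothesis gives \(\ell\cap W\neq\emptyset\). On the other hand, since \(x\) is isolated in \(V\), no line of \(\Lines\) through \(x\) is contained in \(V\); in particular \(\ell\not\subseteq V\). Because \(V\) is a subspace and \(\ell\) is a line, \(\ell\cap V\) is either empty, a single point, or all of \(\ell\); as \(x\in\ell\cap V\) and \(\ell\not\subseteq V\), we must have \(\ell\cap V=\{x\}\). Combining with \(W\subseteq V\) and \(\ell\cap W\neq\emptyset\), we get \(\emptyset\neq \ell\cap W\subseteq\ell\cap V=\{x\}\), forcing \(x\in W\). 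Moreover, any line \(m\in\Lines\) through \(x\) contained in \(W\) would be contained in \(V\), contradicting that \(x\) is isolated in \(V\); hence \(x\) is isolated in \(W\).

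For the ``in particular'' statement, suppose \(U\) and \(W\) have no isolated points. If \(V\) contained an isolated point \(x\), the first part would force \(x\) to be isolated in \(U\) or in \(W\), both of which are impossible; thus \(V\) has no isolated points either.

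I do not expect any real obstacle here: the entire argument is a short bookkeeping exercise with the definitions of subspace, line, and \(\Lines\)-isolated point. The only step that requires a moment's thought is the reduction \(\ell\cap V=\{x\}\), which follows because a line meets a subspace in \(\emptyset\), a point, or the whole line, and the first and third options are ruled out.
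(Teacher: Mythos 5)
Your proof is correct and follows essentially the same route as the paper's: the paper also picks a line \(L\in\Lines\) through \(x\) in \(U\), takes a point \(y\in L\cap W\), and notes that \(x\neq y\) would put two points of \(L\) in \(V\) and hence \(L\subseteq V\), which is exactly your observation that \(\ell\cap V=\{x\}\); the final step (a line of \(\Lines\) through \(x\) inside \(W\) would lie in \(V\)) is identical. No gaps.
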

\begin{proof}
	We may assume that \(x\) is isolated in \(V\) but not in \(U\). Let \(L\) be a line of \(\Lines\) in \(U\) through \(x\). Since \(L\) intersects \(W\), we can let \(y\) be a point on \(L\) in \(W\). If \(x\neq y\), then \(L\) would be contained in \(V\) contradicting that \(x\) was isolated in \(V\). Hence, \(x = y\) and contained in \(W\). As in Remark \ref{rem:IsolatedAndIdealPoints}, if \(x\) was not isolated in \(W\), then \(x\) would not be isolated in \(V\). Therefore, \(x\) must be isolated in \(W\).
\end{proof}

\section{Background}\label{sec:Background}

\subsection{Generalised hexagons}\label{GenBack}

In this section we highlight some of the basic properties and notation concerning generalised hexagons which will be used later. More details can be found in \cite{VanMaldeghem1998}.

\begin{definition}\label{def:WeakGeneralisedHexagon}
	A {\em weak generalised hexagon} is a point-line geometry \((\Points,\Lines,\I)\) satisfying:   
	\begin{enumerate}[(GH1)]
		\item There exists no \(k\)-gon (as a subgeometry) for \(2 \le k < 6\).
		\item Every two elements of \(\Points \cup \Lines\) are contained in a hexagon.
	\end{enumerate}
	A {\em thick generalised hexagon} is a weak generalised hexagon with the additional property:
	\begin{enumerate}[(GH3)]
		\item There exists a heptagon (as a subgeometry).
	\end{enumerate}
\end{definition}

\begin{remark}\label{rem:ThickGeneralisedHexagon}
	Property (GH3) is equivalent to the following, see for instance \cite[Lemma 1.3.2]{VanMaldeghem1998}:
	\textit{
		\begin{enumerate}[(GH3')]
			\item All points and lines are incident with at least 3 lines or points.
	\end{enumerate}}
\end{remark}

The {\em incidence graph} of a point-line geometry is the bipartite graph with as vertices the points and the lines and an edge between vertices \(x\) and \(y\) if and only if \(x\) and \(y\) are incident.
A weak generalised hexagon can also be defined using its incidence graph:

\begin{corollary}(See e.g. \cite[Theorem 1.5.10]{VanMaldeghem1998})\label{cor:incidenceGraph}
	Let \(\Gamma\) be a point-line geometry with at least two points and the properties that:
	\begin{itemize}
		\item every point is incident with at least two lines;
		\item every line is incident with at least two points.
	\end{itemize}
	Then, \(\Gamma\) is a weak generalised hexagon if and only if its incidence graph has diameter 6 and girth \(12\).
\end{corollary}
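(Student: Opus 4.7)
The plan is to prove the equivalence by translating the axioms (GH1) and (GH2) into graph-theoretic statements about the incidence graph and then using the fact that a $k$-gon in the geometry corresponds exactly to a cycle of length $2k$ in the incidence graph (since the graph is bipartite and each polygon alternates between $k$ points and $k$ lines). The assumption that every point is on at least two lines and every line has at least two points guarantees minimum degree at least $2$, which is what will make ``extension'' arguments possible in the reverse direction.

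For the forward direction, assume $\Gamma$ is a weak generalised hexagon. Axiom (GH1) says there is no $k$-gon for $2 \le k < 6$, which translates to: no cycle of length $2k$ with $4 \le 2k < 12$ in the incidence graph. Combined with the fact that (GH2) forces the existence of a hexagon (and hence of a $12$-cycle), the girth is exactly $12$. For the diameter, axiom (GH2) places any two elements $u,v$ of $\mathcal{P}\cup\mathcal{L}$ on a common hexagon, so the distance between them in the incidence graph is at most $6$; taking $u$ and $v$ to be opposite vertices on such a hexagon, any strictly shorter path in the incidence graph would close up with one of the two arcs of the hexagon to yield a cycle of length less than $12$, contradicting the girth. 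Hence the diameter equals $6$.

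For the reverse direction, assume the incidence graph has diameter $6$ and girth $12$. The girth condition immediately prohibits cycles of length less than $12$ and thus $k$-gons for $2 \le k < 6$, giving (GH1). To establish (GH2), take any two elements $u,v$ and a shortest path $P$ between them, of length $d \le 6$. Using the minimum-degree hypothesis, choose a neighbour $u'$ of $u$ not lying on $P$; by the diameter assumption there is a path $P'$ from $u'$ to $v$ of length at most $6$. The union $P \cup \{uu'\} \cup P'$ must be a cycle (any short-cut would violate the girth bound of $12$), and a careful length count using girth $12$ forces $P'$ to have length exactly $5$ when $d=6$, yielding a $12$-cycle through $u$ and $v$, which is precisely a hexagon of the geometry. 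The cases $d < 6$ are handled by iterating this extension at both ends until a full $12$-cycle containing $u$ and $v$ is produced.

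The main obstacle is the careful bookkeeping in the reverse direction: one must ensure that the neighbours picked at each extension step are genuinely new (i.e., do not collapse the partial cycle to something shorter), and that the resulting $12$-cycle really uses six points and six lines alternately rather than degenerating. Both issues are controlled by the girth $12$ assumption, which forbids all shorter closed walks, together with minimum degree at least $2$, which guarantees that a fresh neighbour is always available. Since the statement is quoted as \cite[Theorem 1.5.10]{VanMaldeghem1998}, the full argument is standard and I would simply refer the reader there after verifying that the forward implication is clear from the dictionary between $k$-gons and $2k$-cycles.
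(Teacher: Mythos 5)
The paper does not prove this corollary at all---it is quoted directly from \cite[Theorem 1.5.10]{VanMaldeghem1998}---so there is no internal proof to compare against. Your sketch reproduces the standard argument behind that reference (the bipartite dictionary between $k$-gons and $2k$-cycles, girth $12$ giving (GH1), and geodesic extension plus the fact that two distinct paths between the same pair of vertices span a cycle giving (GH2)); it is sound in outline, with only minor bookkeeping to tighten, e.g.\ that the length $d(u',v)=5$ in the $d=6$ case is forced by bipartite parity together with the diameter bound (the girth is what then forces internal disjointness of the two paths), and that when $d(u,v)<6$ every neighbour of an endpoint other than its path-predecessor strictly increases the distance, which is the girth argument that makes the iterated extension terminate at a geodesic of length $6$.
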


The {\em distance}, \(\delta(x,y)\), between two elements \(x\) and \(y\) of a weak generalised hexagon \(\Gamma\) is the distance inherited from its incidence graph. Two points, resp. lines, are {\em opposite} if they are at the furthest possible distance, i.e. 6.

The property that there are no quadrangles implies that for any two points \(x\) and \(y\) at distance 4 from each other, there is a unique point, which we will denote by \(x\bowtie y\), collinear with both.

The following useful observation follows directly from the definition of a weak generalised polygon. 

\begin{corollary}(See e.g. \cite[Theorem 1.3.5]{VanMaldeghem1998})\label{cor:UniqueClosest}
	Let \(x\) be a point and \(L\) be a line of a weak generalised hexagon \(\Gamma\). Then, there is a unique point on \(L\) closest (i.e., at smallest distance) to \(x\). 
\end{corollary}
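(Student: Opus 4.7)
The plan is to deduce this from Corollary \ref{cor:incidenceGraph}, which says that the incidence graph $G$ of $\Gamma$ is bipartite with diameter $6$ and girth $12$. Since the point $x$ and the line $L$ lie on opposite sides of the bipartition, the distance $d := \delta(x,L)$ in $G$ is odd, so $d \in \{1,3,5\}$. If $d = 1$ then $x$ itself is incident with $L$ and is trivially the unique closest point, so assume $d \in \{3,5\}$. Any neighbour $y$ of $L$ then satisfies $\delta(x,y) \in \{d-1, d+1\}$, and $\delta(x,y) = d-1$ if and only if $y$ lies on a shortest path from $x$ to $L$. So the task is to show that only one neighbour of $L$ lies on such a shortest path.

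Suppose, for contradiction, that distinct points $y \ne y'$ on $L$ both satisfy $\delta(x,y) = \delta(x,y') = d-1$, and fix shortest paths
\[
P\colon x = u_0, u_1, \ldots, u_{d-1} = y
\quad\text{and}\quad
P'\colon x = v_0, v_1, \ldots, v_{d-1} = y'
\]
in $G$. Let $k$ be the largest index for which $u_k$ also appears on $P'$; since $\delta(x, u_k)=k$ and each vertex on a shortest path out of $x$ has a unique distance from $x$, we must have $u_k = v_k$. Because $y \ne y'$, we get $k \le d-2$, and by maximality of $k$ the terminal segments $u_k, u_{k+1}, \ldots, u_{d-1}$ and $v_k, v_{k+1}, \ldots, v_{d-1}$ share only their initial vertex $u_k = v_k$. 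Glueing them with the two edges $y\,{-}\,L$ and $L\,{-}\,y'$ produces a genuine cycle in $G$ of length
\[
2(d - 1 - k) + 2 \le 2d \le 10,
\]
which contradicts the girth of $G$ being $12$. Hence $y = y'$.

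The argument is a standard girth-versus-shortest-path trick and I do not foresee a real obstacle. The only subtle point is extracting a proper cycle, rather than just a closed walk, from the two shortest paths; this is handled by taking the \emph{last} common vertex of $P$ and $P'$ (ensuring length $\ge 4$) and invoking the uniqueness of distances along a shortest path from $x$ (ensuring $u_k = v_k$).
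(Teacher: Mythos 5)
Your proof is correct and complete. Note that the paper offers no proof of Corollary \ref{cor:UniqueClosest} at all --- it is imported from the literature (Theorem 1.3.5 of \cite{VanMaldeghem1998}), so the only meaningful comparison is with the standard argument behind that citation, which is essentially what you reconstruct: in \cite{VanMaldeghem1998} weak polygons are characterised through their incidence graphs, and the unique-closest-element property is derived from the diameter and girth conditions exactly as you do, via the paper's Corollary \ref{cor:incidenceGraph}. Two small points deserve mention. First, Corollary \ref{cor:incidenceGraph} carries the hypotheses that every point lies on at least two lines and every line carries at least two points; these do hold here, since by (GH2) every element of a weak generalised hexagon lies in an ordinary hexagon, but invoking the corollary deserves that one-line justification. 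Second, for the glued walk to be a genuine cycle you also need $L$ itself to be distinct from every vertex of the two path segments; this is immediate (any vertex on either shortest path is at distance at most $d-1$ from $x$, while $\delta(x,L)=d$), and together with your last-common-vertex argument it makes the cycle of length $2(d-1-k)+2\le 2d\le 10$ legitimate, contradicting girth $12$. With those two observations added, the argument is airtight: parity gives existence of a point of $L$ at distance $d-1$, and the girth bound gives uniqueness.
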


The weak generalised hexagons that we consider have an order \((s,t)\) such that every line is incident with \(s+1\) points and every point is incident with \(t+1\) lines for some integers $s,t\geq 1$. 

\begin{remark}
	It is well-known (see for example \cite[Corollary 1.5.3]{VanMaldeghem1998}) that every thick finite generalised hexagon has an order. 
	Some restrictions on possible orders are known (\cite{FeitHigman1964} and \cite{HaemersRoos1981}).
\end{remark}

\begin{corollary}(See e.g. \cite[Lemma 1.5.4]{VanMaldeghem1998})\label{cor:Size}
	Let \(\Gamma\) be a finite weak generalised hexagon of order \((s,t)\). Then,
	\(|\Points| = (1+s)(1+st+s^2t^2)\) and \(|\Lines| = (1+t)(1+st+s^2t^2)\).
\end{corollary}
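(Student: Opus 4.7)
The plan is to count vertices of the bipartite incidence graph at each even distance from a fixed point, using the diameter-$6$, girth-$12$ characterisation from Corollary~\ref{cor:incidenceGraph}.

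Fix a point $x \in \Points$ and let $N_k$ denote the number of vertices at graph-distance exactly $k$ from $x$, so that $|\Points| = N_0 + N_2 + N_4 + N_6$. The first step is to observe that the ball of radius $5$ around $x$ is a tree: if two distinct paths of length at most $5$ from $x$ shared an endpoint, then concatenating them would yield a closed walk (hence a cycle) of length at most $10$, contradicting girth $12$. Reading off the branching data ($t+1$ lines through each point, $s+1$ points on each line, minus the single edge leading back towards $x$ after the first step) then gives $N_0 = 1$, $N_2 = s(t+1)$, $N_4 = s^2 t(t+1)$ and $N_5 = s^2 t^2(t+1)$.

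At distance $6$ the tree structure breaks, so $N_6$ has to be computed by a double count of incidences between points at distance $6$ and lines at distance $5$. Each line at distance $5$ contributes $s$ points at distance $6$ (since exactly one of its $s+1$ points lies at distance $4$), giving $N_5 \cdot s$ incident pairs. Conversely, for any $y$ with $\delta(x,y) = 6$ and any line $L$ through $y$, the facts that $\delta(x,L)$ is odd, $\delta(x,L) \ge \delta(x,y) - 1 = 5$, and $\delta(x,L) \le 6$ force $\delta(x,L) = 5$; hence every one of the $t+1$ lines through $y$ is counted, giving $N_6(t+1)$ pairs. Equating the two expressions yields $N_6 = s^3 t^2$.

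Summing, $|\Points| = 1 + s(t+1) + s^2 t(t+1) + s^3 t^2 = (1+s)(1+st+s^2 t^2)$, as required. The formula for $|\Lines|$ follows by running the identical argument starting from a fixed line instead of a fixed point, which simply interchanges the roles of $s$ and $t$ throughout. The only subtle step is the distance-$6$ count, where the ball-of-radius-$5$ tree argument no longer applies and the double count above is needed; everywhere else the count is forced by local branching together with the girth bound.
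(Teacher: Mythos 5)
Your proof is correct. The paper does not prove this statement itself---it simply cites \cite[Lemma 1.5.4]{VanMaldeghem1998}---and your argument (the radius-$5$ ball around a point is a tree by girth $12$, local branching gives $N_0, N_2, N_4, N_5$, and a double count of incidences between distance-$6$ points and distance-$5$ lines gives $N_6 = s^3t^2$) is exactly the standard counting proof that the citation refers to, carried out correctly, including the dual count for $|\Lines|$.
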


As mentioned before, up to duality, only two classes of finite thick generalised hexagons are known: the \emph{split Cayley hexagons} of order \((q,q)\) and the \emph{twisted triality hexagons} of order \((q^3,q)\). We focus on this second class although the split Cayley hexagons will still play an important role.

\begin{definition}\label{def:FullAndIdeal}
	A subgeometry \((\Points', \Lines',\I')\) of a weak generalised hexagon forming a weak generalised hexagon is called a \emph{subhexagon}. In general, a point of a subgeometry is called \emph{ideal} if all lines through that point are contained in the subgeometry. Dually, we call a line of a subgeometry \emph{full} if all points on that line are contained in the subgeometry. If all points are ideal we call the subgeometry itself \emph{ideal} and likewise we call a subgeometry \emph{full} if all of its lines are full.
\end{definition}

\begin{remark}\label{rem:IdealAndFull}
	Since we will only be working with weak generalised hexagons which have an order, we can restate the definitions of ideal and full as follows: 
	Let \(\Gamma\) be a weak generalised hexagon of order \((s,t)\) and let \(\Gamma'\) be a subhexagon of order \((s',t')\) of \(\Gamma\). Then \(\Gamma'\) is ideal precisely if \(t=t'\) and \(\Gamma'\) is full precisely when \(s=s'\).
\end{remark}

\begin{remark}\label{rem:SubhexagonsSCHandTT} 
	It is well known that the  split Cayley hexagon \(\SCHex\) contains ideal subhexagons of order \((1,q)\) \cite[Section 1.9.8]{VanMaldeghem1998}. We even have the following property: 
	any two opposite points of \(\SCHex\) are contained in a \((1,q)\) subhexagon.
	Likewise, the following is true in the twisted triality hexagon \(\TT\):
	any two opposite points are contained in an ideal split Cayley subhexagon.
\end{remark}

\begin{corollary}\label{cor:DistanceInSub}
	Let \(\Gamma\) be a generalised hexagon and let \(\Gamma'\) be a subhexagon of \(\Gamma\).
	Let \(x\) and \(y\) be two non opposite elements of \(\Gamma'\). Then, the shortest path between \(x\) and \(y\) is contained in \(\Gamma'\).
\end{corollary}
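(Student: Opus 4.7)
My plan is to interpret the statement as saying that $x$ and $y$ are elements of $\Gamma'$ that are non-opposite as elements of $\Gamma$, and to show that the (necessarily unique) shortest path between them in $\Gamma$ already lies inside $\Gamma'$. The argument rests on two ingredients: the uniqueness of short paths in a generalised hexagon (a consequence of the girth-12 property recorded in Corollary \ref{cor:incidenceGraph}), and the fact that $\Gamma'$ is itself a weak generalised hexagon, so axiom (GH2) can be applied inside $\Gamma'$.

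First I would record the general uniqueness principle: if two elements $u,v$ of a generalised hexagon are joined by two distinct paths of lengths $d_1\le d_2$, then their union contains a closed walk, hence a cycle, of length at most $d_1+d_2$. By Corollary \ref{cor:incidenceGraph} the incidence graph has girth $12$, which forces $d_1+d_2\ge 12$, and hence $d_1=d_2=6$, i.e.\ $u$ and $v$ are opposite. Consequently, for any two non-opposite elements the shortest path is unique, and more strongly, \emph{any} path between them of length at most $6$ must coincide with this unique shortest path.

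Next, apply axiom (GH2) inside the subhexagon $\Gamma'$ to obtain an ordinary hexagon $H\subseteq\Gamma'$ containing both $x$ and $y$. In $H$ there are exactly two paths between $x$ and $y$ whose lengths sum to $12$, so one of them, call it $P$, has length at most $6$. I would then eliminate the borderline case: if both paths in $H$ had length exactly $6$, they would be two distinct paths between $x$ and $y$ in $\Gamma$ of length $6$, and by the uniqueness principle above this would force $x$ and $y$ to be opposite in $\Gamma$, contradicting the hypothesis. Hence $P$ has length strictly less than $6$.

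Finally, since $P$ is a path in $\Gamma$ of length at most $5$ joining the non-opposite pair $x,y$, the uniqueness principle identifies $P$ with the shortest path between $x$ and $y$ in $\Gamma$. As $P$ lies in $H\subseteq\Gamma'$, this proves the claim. The only delicate step is the length-six case ruling out ambiguity inside $H$; the remainder is bookkeeping with the girth-12 property and the hexagon axiom (GH2) applied in $\Gamma'$.
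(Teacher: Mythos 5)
Your proof is correct and takes essentially the same route as the paper: the paper's one-line argument likewise combines a path inside \(\Gamma'\) with the shortest path in \(\Gamma\) and invokes the absence of \(k\)-gons with \(2\le k<6\) (equivalently, girth \(12\)) to force the two paths to coincide. Your version is simply more explicit, producing the \(\Gamma'\)-path from a hexagon via (GH2) and spelling out the uniqueness and borderline-length considerations that the paper leaves implicit.
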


\begin{definition}\label{def:GammaSets}
	Let \(\Gamma = (\Points,\Lines,\I)\) be a generalised hexagon and let \(x\) be either a point or a line in \(\Gamma\). We define:
	\begin{align*}
		\Gamma_i(x) &:= \{y \in \Points\cup \Lines : \delta(x,y) = i\}, \\
		\Gamma_{\le i}(x) &:= \{y \in \Points\cup\Lines: \delta(x,y) \le i\} \\
	\end{align*}
	where \(\delta(x,y)\) is the distance between \(x\) and \(y\) inherited from the incidence graph.
\end{definition}

\begin{remark}\label{rem:NotationPL}
	For convenience we will use the sets \(\Points\) and \(\Lines\) as maps from sets and geometric structures to the subsets just containing points and lines, respectively, of the hexagon. 
	With this in mind, \(\Points(\Gamma_{\le i}(x))\) is the set of point of the hexagon at distance at most \(i\) from \(x\). 
	To simplify even further, we introduce the following abbreviations:
	\begin{align*}
		&\Points_{\le i}(x) := \Points(\Gamma_{\le i}(x)), 	&\Lines_{\le i}(x) := \Lines(\Gamma_{\le i}(x)), \\ 
		&\Points_{i}(x) := \Points(\Gamma_{i}(x)), 			&\Lines_{i}(x) := \Lines(\Gamma_{i}(x)).
	\end{align*}
	
\end{remark}

Let \(\Gamma\) be a weak generalised hexagon of order \((s,t)\).
Let \(L\) and \(M\) be two opposite lines of \(\Gamma\) and let \(x_L\) be an arbitrary point on \(L\). By Lemma \ref{cor:UniqueClosest}, there exists a unique point \(x_M\) on \(M\) at distance \(4\) from \(x_L\). The path we find from \(x_L\) to \(x_M\) goes through the point \(x := x_L\bowtie x_M\). Since \(x_L\) was arbitrary and there are no \(k\)-gons for \(k\in\{2,3,4,5\}\), we find \(s+1\) such points \textit{in between} \(L\) and \(M\).
We call this set, \(\Points_{3}(L) \cap \Points_{3}(M)\), the \emph{distance-3-trace},  \(T(L,M)\),  of \(L\) and \(M\).
Dually we define the \emph{distance-3-trace}, \(T(x,y)\), of two opposite points \(x\) and \(y\) to be the set \(\Lines_{3}(x) \cap \Lines_{3}(y)\).

\begin{definition}
	We call a line \(L\) \emph{distance-\(3\)-regular} if for any two lines \(M\) and \(N\) opposite \(L\), the distance-3-traces \(T(L,M)\) and \(T(L,N)\) are either the same or have at most one point in common.
	Dually, a point \(x\) is \emph{distance-\(3\)-regular} if for any two points \(y\) and \(z\) opposite \(x\), the distance-3-traces \(T(x,y)\) and \(T(x,z)\) are either the same or have at most one point in common.
	We say that a weak generalised hexagon \(\Gamma\) is \emph{distance-\(3\)-regular} if all points and all lines of \(\Gamma\) are distance-\(3\)-regular.
\end{definition}

It is easy to see from the construction that the following is true:
\begin{corollary}\label{cor:TraceOpposite}
	Let \(L\) and \(M\) be two opposite lines of a weak generalised hexagon \(\Gamma\). Then, all points of \(T(L,M)\) are mutually opposite.
	Dually, for any two opposite points \(x\) and \(y\) of a weak generalised hexagon \(\Gamma\), all lines of \(T(x,y)\) are mutually opposite.
\end{corollary}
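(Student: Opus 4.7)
The plan is to prove the line-version of the statement and then deduce the point-version by duality. Let $x, x'$ be two distinct points of $T(L,M)$. Since each lies at distance $3$ from $L$, Corollary \ref{cor:UniqueClosest} supplies, for each, a unique point of $L$ at distance $2$: call these $x_L$ and $x'_L$, and denote by $M_x$, respectively $M_{x'}$, the unique line through $x, x_L$ and $x', x'_L$. The first small step I would take is to check that $x_L \neq x'_L$: if they coincided, then the construction of $T(L,M)$ described just before the statement (pick a point on $L$, take the unique point on $M$ at distance $4$, meet in the middle) would force both $x$ and $x'$ to equal $x_L \bowtie x_M$, contradicting $x \neq x'$.

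Since $x$ and $x'$ are both points, $\delta(x,x')$ is even, so to prove they are opposite it suffices to rule out $\delta(x,x') \in \{2,4\}$. If $\delta(x,x') = 2$, let $N$ be the line through $x$ and $x'$; then the closed sequence $x$–$N$–$x'$–$M_{x'}$–$x'_L$–$L$–$x_L$–$M_x$–$x$ is a cycle of length $8$ in the incidence graph, which is a quadrangle in $\Gamma$, contradicting (GH1). If $\delta(x,x') = 4$, introduce the unique $y = x \bowtie x'$ with joining lines $M_{xy}$ and $M_{yx'}$; the analogous closed sequence inserted in place of $N$ has length $10$ and thus is a pentagon in $\Gamma$, again contradicting (GH1). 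Hence $\delta(x,x') = 6$, proving that the points of $T(L,M)$ are mutually opposite.

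The one mildly delicate part is verifying that the cycles above are genuine, i.e.\ that no two of their vertices secretly coincide. This is routine and I would dispatch it with a uniform argument: every potential collision among the points in $\{x,x',x_L,x'_L,y\}$ or lines in $\{M_x, M_{x'}, L, N \text{ or } M_{xy}, M_{yx'}\}$ either contradicts the uniqueness statement in Corollary \ref{cor:UniqueClosest} (typically by forcing $x_L = x'_L$) or else immediately shortcuts to an even shorter cycle, which still violates (GH1). The dual statement for two opposite points $x,y$ follows by running the entire argument in the dual hexagon.
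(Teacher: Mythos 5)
Your proposal is correct and matches the paper's (implicit) argument: the paper dismisses this corollary as ``easy to see from our construction,'' where the construction parametrises the trace points via unique closest points (Corollary \ref{cor:UniqueClosest}) and invokes the absence of $k$-gons for $k<6$, which is precisely the cycle-building contradiction you carry out in detail. Your write-up simply makes explicit the distinctness checks and the length-$8$ and length-$10$ cycles that the paper leaves to the reader.
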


\begin{remark}\label{rem:d3reg}
	It is well known that the twisted triality hexagon \(\TT\) is distance-\(3\)-regular (\cite{Ronan1980},\cite{VanMaldeghem1998}).
	This justifies the following definition:
\end{remark}

\begin{definition}\label{def:Regulus}
	Let \(L\) and \(M\) be two opposite lines of a twisted triality hexagon \(\TT\).
	
	We define \(R(L,M)\) to be the set of \(q+1\) lines in \(\bigcap_{x\in T(L,M)} \Lines_{3}(x)\) and call this the \emph{regulus} defined by \(L\) and \(M\).
\end{definition}

\begin{remark}
	For the natural embedding of \(\TT\) in \(\PG\), we will later see that \(R(L,M)\) is contained in \(\langle L, M \rangle\) and forms a subset of one generator set of a hyperbolic quadric \(\mathsf{Q}^+(3,q^3)\), but a priori this does not need to be true. 
\end{remark}

\begin{lemma}\label{lem:LineOppositeRegulus} 
	Let \(L\) and \(M\) be two opposite lines of a twisted triality hexagon \(\TT\). Let \(N\) be a line which is opposite all lines of \(R(L,M)\). Then, every point of \(T(L,M)\) is at distance 5 from \(N\).
\end{lemma}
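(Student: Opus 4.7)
The plan is to use that in a generalised hexagon the distance between a point and a line is odd and at most $5$, so for any $x \in T(L,M)$ we have $\delta(x, N) \in \{1, 3, 5\}$, and to rule out $\delta(x, N) = 1$ and $\delta(x, N) = 3$ by producing a short path from $N$ to some line of $R(L, M)$.

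The key structural step is to set up a bijection between $R(L, M)$ and the $q+1$ lines through $x$. For each $K \in R(L, M)$, the equality $\delta(x, K) = 3$ together with Corollary \ref{cor:UniqueClosest} produces a unique point on $K$ at distance $2$ from $x$, hence a unique line $\ell_K$ through $x$ meeting $K$. To see that $K \mapsto \ell_K$ is injective I first establish that the lines of $R(L, M)$ are pairwise opposite: picking two distinct points $x_1, x_2 \in T(L, M)$, which are opposite by Corollary \ref{cor:TraceOpposite}, every line of $R(L, M)$ lies in $T(x_1, x_2)$, and since both sets have $q+1$ elements they coincide; the dual part of Corollary \ref{cor:TraceOpposite} then yields the required opposite-ness. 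If $\ell_K = \ell_{K'}$ for distinct $K, K' \in R(L, M)$, then that common line meets both $K$ and $K'$, forcing $\delta(K, K') \le 4$, a contradiction. Because $x$ lies on exactly $q+1$ lines, surjectivity is automatic and every line through $x$ arises as some $\ell_K$.

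The lemma then follows from two short-path arguments. If $\delta(x, N) = 1$, then $N$ passes through $x$, so $N = \ell_K$ for some $K \in R(L, M)$, whence $N$ meets $K$ and $\delta(N, K) \le 2$, contradicting that $N$ is opposite every line of $R(L, M)$. If $\delta(x, N) = 3$, then some line $\ell$ through $x$ meets $N$ at a point $y$; writing $\ell = \ell_K$ and letting $z$ be the point where $\ell$ meets $K$, we obtain a path $K, z, \ell, y, N$ of length at most $4$, so $\delta(N, K) \le 4$, again a contradiction. Hence $\delta(x, N) = 5$. The main obstacle is establishing the mutual opposite-ness of the lines of $R(L, M)$, which is what makes the bijection $K \mapsto \ell_K$ work; once that is in hand, both forbidden distance cases collapse to short-path contradictions against the opposite-ness of $N$ and $R(L, M)$.
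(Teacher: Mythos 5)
Your proof is correct and follows essentially the same route as the paper: establish that the lines of \(R(L,M)\) are mutually opposite via Corollary \ref{cor:TraceOpposite}, deduce the bijection between \(R(L,M)\) and the \(q+1\) lines through a point \(x\) of \(T(L,M)\), and conclude that \(N\) can neither pass through \(x\) nor meet a line through \(x\). You merely make explicit two steps the paper compresses, namely the identification of \(R(L,M)\) with (a subset of) \(T(x_1,x_2)\) to invoke the dual of Corollary \ref{cor:TraceOpposite}, and the case analysis \(\delta(x,N)\in\{1,3,5\}\).
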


\begin{lemma}\label{lem:MaximumSetOfOppositeLines}(See \cite{Offer2002} for the dual statement)
	Let \(\mathcal{S}\) be a set of mutually opposite lines in a weak generalised hexagon \(\Gamma = (\Lines,\Points,\I)\) of order \((s,t)\). Then, \(|\mathcal{S}|\le st^2+1\).
\end{lemma}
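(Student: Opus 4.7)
The plan is a double-counting argument. Fix a line $L \in \mathcal{S}$ and a point $x$ on $L$. For any other $M \in \mathcal{S}$, the lines $L$ and $M$ are opposite, so by Corollary \ref{cor:UniqueClosest} applied to $x$ and $M$ there is a unique point on $M$ at distance $4$ from $x$, which in turn forces a unique point $z_M$ at distance $2$ from $x$ and distance $3$ from $M$ (equivalently, the unique point of the trace $T(L,M)$ lying at distance $2$ from $x$). Let $S_x$ denote the set of points at distance $2$ from $x$ that are not on $L$; then $|S_x| = st$, since through $x$ there are $t$ lines other than $L$, each carrying $s$ points besides $x$. So we obtain a well-defined map $\phi_x : \mathcal{S} \setminus \{L\} \to S_x$, and the task reduces to bounding its fibres.

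The core step is to show that each fibre of $\phi_x$ has size at most $t$. Fix $z \in S_x$. Any $M \in \phi_x^{-1}(z)$ sits at distance $3$ from $z$, so there is a unique shortest path $z - N - y - M$, where $N$ is a line through $z$ with $N \neq xz$, $y$ is a point on $N$ distinct from $z$, and $M$ is a line through $y$ distinct from $N$. I claim that distinct $M, M' \in \phi_x^{-1}(z)$ must use distinct lines $N$. Indeed, if both used the same $N$ then either they would share a common point on $N$, putting them at distance $2$, or they would pass through distinct points $y \neq y'$ on $N$, giving a path $M - y - N - y' - M'$ of length $4$; in either case $M$ and $M'$ would fail to be opposite, contradicting the mutual oppositeness of $\mathcal{S}$. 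Since there are exactly $t$ lines through $z$ other than $xz$, this yields $|\phi_x^{-1}(z)| \leq t$.

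Combining the two counts gives $|\mathcal{S} \setminus \{L\}| \leq |S_x| \cdot t = st \cdot t = st^2$, and hence $|\mathcal{S}| \leq st^2 + 1$, as required. The only subtle point is the fibre bound, which relies entirely on the absence of $k$-gons for $k<6$; this is what forces the two auxiliary paths in the "same $N$" case to be genuine shortest paths and rules out the coincidences that would otherwise allow a fibre larger than $t$.
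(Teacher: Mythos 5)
Your proof is correct and takes essentially the same approach as the paper: both arguments fix a point $x$ on a line $L \in \mathcal{S}$, use Corollary \ref{cor:UniqueClosest} to obtain the unique shortest path from $x$ to each other line of $\mathcal{S}$, and exploit mutual oppositeness to show that two such paths cannot share an intermediate element. The paper packages this as an injective map sending $M$ to the line at distance $3$ from $x$ meeting $M$, whose codomain $\Lines_{\le 3}(x)\setminus\Lines_{\le 2}(L)$ has size $st^2$, while you send $M$ to the point $z_M$ at distance $2$ from $x$ and bound each fibre by $t$; your fibre argument is precisely the paper's injectivity argument carried out one point at a time, so the two counts $st \cdot t = st^2$ coincide.
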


The following observation follows from a simple counting argument.
\begin{lemma}\label{lem:SpreadBlocksLines}
	Let \(\mathcal{S}\) be a set of \(q^3+1\) mutually opposite lines of a finite split Cayley hexagon \(\SCHex\). Then, every line of \(\SCHex\)  intersects a line in \(\mathcal{S}\).
\end{lemma}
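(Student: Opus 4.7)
The plan is to argue by contradiction, using Lemma \ref{lem:MaximumSetOfOppositeLines} twice. Suppose that $N$ is a line of $\SCHex$ which does not meet any line of $\mathcal{S}$.

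First I would rule out the case where $N$ is opposite every line of $\mathcal{S}$. If that were so, then $\mathcal{S}\cup\{N\}$ would be a set of $q^3+2$ mutually opposite lines, contradicting Lemma \ref{lem:MaximumSetOfOppositeLines}: since $\SCHex$ has order $(q,q)$, the bound $st^2+1$ equals $q^3+1$. Combined with the assumption that $N$ meets no line of $\mathcal{S}$, this forces the existence of some $L\in\mathcal{S}$ with $\delta(N,L)=4$. By the absence of small subpolygons there is a unique line $K$ meeting both $L$ and $N$; let $x$ be the point $L\cap K$.

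The second application of the bound exploits equality in its proof. With this choice of $L$ and $x$, the map $\pi\colon\mathcal{S}\setminus\{L\}\to\Lines_{\le 3}(x)\setminus\Lines_{\le 2}(L)$ constructed in the proof of Lemma \ref{lem:MaximumSetOfOppositeLines} is an injection between sets of the same cardinality $q^3$, hence a bijection. I then need to verify that $N$ lies in the codomain: the path $x\in K$ together with $K\cap N$ gives $\delta(x,N)=3$, so $N\in\Lines_{\le 3}(x)$, while $N\notin\Lines_{\le 2}(L)$ because $N\neq L$ and $N$ does not meet $L$. Consequently some $M\in\mathcal{S}\setminus\{L\}$ satisfies $\pi(M)=N$, and by the very definition of $\pi$ the line $N=\pi(M)$ intersects $M$, contradicting the standing assumption.

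The only point requiring any real care is recognising that the extremal hypothesis $|\mathcal{S}|=q^3+1$ upgrades the injection $\pi$ of Lemma \ref{lem:MaximumSetOfOppositeLines} to a bijection, and then checking that $N$ actually lands in its codomain. Neither step should present a serious obstacle: the first is an immediate cardinality comparison already built into that lemma, and the second is a short distance computation using $K$ as a bridge from $x$ to $N$.
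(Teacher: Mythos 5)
Your proof is correct, but it takes a genuinely different route from the paper's. The paper gives a one-line global count: since the lines of \(\mathcal{S}\) are mutually opposite, no line of \(\SCHex\) can meet two of them (that would create a short cycle), so the lines meeting some element of \(\mathcal{S}\) number exactly \(|\mathcal{S}|\bigl((q+1)q+1\bigr) = (q^3+1)(q^2+q+1) = q^5+q^4+q^3+q^2+q+1\), which by Corollary \ref{cor:Size} is all of \(\Lines\). You instead argue by contradiction: you first invoke the statement of Lemma \ref{lem:MaximumSetOfOppositeLines} to rule out a line \(N\) opposite everything in \(\mathcal{S}\), then reopen that lemma's \emph{proof}, observing that extremality \(|\mathcal{S}\setminus\{L\}| = q^3 = st^2\) forces the injection \(\pi\) to be a bijection, and finally check that \(N\) lies in the codomain \(\Lines_{\le 3}(x)\setminus\Lines_{\le 2}(L)\) so that it must be \(\pi(M)\) for some \(M\in\mathcal{S}\), i.e.\ must meet \(M\). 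All your verifications go through: the existence of \(L\) with \(\delta(N,L)=4\), the well-definedness of \(x\), the distance computation \(\delta(x,N)=3\) via \(K\), and the membership \(N\notin\Lines_{\le 2}(L)\). The trade-off: the paper's argument is shorter and needs only the statement of Corollary \ref{cor:Size}, whereas yours avoids the global line-count formula but depends on the internal construction of Lemma \ref{lem:MaximumSetOfOppositeLines} rather than just its conclusion, plus a case split; both exploit the same underlying extremality, yours locally at a point, the paper's globally.
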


\begin{lemma}\label{lem:IdealSubhexagonBlocksLines}
	Let \(\Gamma = (\Points,\Lines,\I)\) be a finite generalised hexagon \(\Gamma\) of order \((s,t)\). Let \(\Gamma' = (\Points',\Lines',\I')\) be an (ideal) subhexagon of order \((k,t)\). 
	If \(k^2t = s\), then every line of \(\Gamma\) intersects a line in \(\Gamma'\).
\end{lemma}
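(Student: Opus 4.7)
My plan is a double count. Let $A$ denote the set of lines of $\Gamma$ that meet at least one line of $\Gamma'$; I want to prove $|A| = |\Lines|$, using Corollary \ref{cor:Size} and the identity $s = k^2 t$. The crucial ingredient, which I expect to be the main obstacle, is the structural claim that every line $L \in \Lines \setminus \Lines'$ meets at most one line of $\Lines'$. I would first extract from Corollary \ref{cor:DistanceInSub} the closure property that whenever two distinct lines of $\Gamma'$ share a point of $\Gamma$, that point already lies in $\Points'$: they are non-opposite in $\Gamma$, and their length-$2$ shortest $\Gamma$-path must therefore sit inside $\Gamma'$.

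Assume for contradiction that some $L \in \Lines \setminus \Lines'$ meets two distinct lines $M_1, M_2 \in \Lines'$ at points $y_1, y_2$. If $y_1 = y_2$, closure places this common point in $\Points'$, and then idealness forces $L \in \Lines'$, a contradiction. If $y_1 \neq y_2$ then $M_1, M_2$ have $\Gamma$-distance $2$ or $4$. In the distance-$2$ case, closure supplies a common point $z \in \Points'$ on $M_1$ and $M_2$; either $z$ coincides with one of $y_1, y_2$ (forcing $L = M_i$) or $z, y_1, y_2$ are distinct and the three lines $L, M_1, M_2$ form a triangle, both of which are impossible. In the distance-$4$ case, girth $12$ forces $L$ to coincide with the unique middle line of the shortest $\Gamma$-path between $M_1$ and $M_2$, which by Corollary \ref{cor:DistanceInSub} lies in $\Gamma'$, again contradicting $L \notin \Lines'$.

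With the structural claim established, the counting is routine. Fix $M \in \Lines'$: the $k+1$ ideal points of $M$ contribute no lines of $\Lines \setminus \Lines'$ (by idealness), while each of the remaining $s-k$ points of $M$ lies on $t$ lines besides $M$, none of which belongs to $\Lines'$ (by closure). So each $M$ accounts for exactly $(s-k)t$ lines of $\Lines \setminus \Lines'$, and the structural claim rules out any overcounting across different $M$. Hence $|A| = |\Lines'|(1 + (s-k)t)$, which after substituting $s = k^2 t$ becomes $|\Lines'|(1 - kt + k^2 t^2)$. The polynomial identity $(1 + kt + k^2 t^2)(1 - kt + k^2 t^2) = 1 + k^2 t^2 + k^4 t^4$ together with Corollary \ref{cor:Size} then yields
\[
|A| = (1+t)(1 + k^2 t^2 + k^4 t^4) = (1+t)(1 + st + s^2 t^2) = |\Lines|,
\]
so $A = \Lines$ and every line of $\Gamma$ meets a line of $\Gamma'$.
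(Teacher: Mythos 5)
Your proof is correct and follows essentially the same route as the paper: both hinge on the structural claim (via Corollary \ref{cor:DistanceInSub}) that a line meeting two lines of \(\Gamma'\) lies in \(\Gamma'\), and then perform the same double count \(|\Lines'|\bigl(1+(s-k)t\bigr) = |\Lines|\). Your more detailed case analysis of the structural claim and your difference-of-squares factorisation are only presentational differences from the paper's terser argument.
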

\begin{proof}
	For every line in \(\Gamma'\), there are \((s+1)t\) lines of \(\Gamma\) intersecting it. Note that only $k+1$ of those intersection points are points of $\Gamma'$. Hence, for every line \(L\) of \(\Gamma'\) there are \((s-k)t\) lines of \(\Gamma\) not in \(\Gamma'\) which intersect \(L\). Since each line of \(\Gamma\) intersecting two lines of \(\Gamma'\) is contained in \(\Gamma'\) (Corollary \ref{cor:DistanceInSub}), we can count all lines of \(\Gamma\) intersecting a line of \(\Gamma'\) as follows:
	\begin{align*}
		&|\Lines'|\cdot (s-k)t + |\Lines'| \\
		&= |\Lines'|\cdot((s-k)t+1) \\
		&= (t+1)(k^2t^2+kt+1)(st-kt+1) \\
		&= (t+1)(s^2t^2 + st + 1)= |\Lines|.\qedhere
	\end{align*} 
\end{proof}

\begin{corollary}\label{cor:IdealSubhexagonOfSCH}
	Let \(\Gamma'\) be a subhexagon of order \((1,q)\) of a split Cayley hexagon \(\SCHex\). Then, every line of \(\SCHex\) intersects a line of \(\Gamma'\).
\end{corollary}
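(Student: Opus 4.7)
The plan is to invoke Lemma \ref{lem:IdealSubhexagonBlocksLines} directly; the whole content of the corollary is the observation that the numerical hypothesis of that lemma is satisfied here, and that the subhexagon is automatically ideal.

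First I would record the parameters. The split Cayley hexagon \(\SCHex\) has order \((s,t)=(q,q)\). A subhexagon \(\Gamma'\) of order \((1,q)\) has parameters \((k,t')=(1,q)\). Since \(t'=q=t\), Remark \ref{rem:IdealAndFull} tells us that \(\Gamma'\) is automatically ideal in \(\SCHex\), so no separate argument is required to promote ``subhexagon'' to ``ideal subhexagon.''

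Next I would check the arithmetic condition \(k^2 t = s\) from Lemma \ref{lem:IdealSubhexagonBlocksLines}: here \(k^2 t = 1 \cdot q = q = s\), so the hypothesis holds. Applying the lemma then yields the conclusion that every line of \(\SCHex\) intersects a line of \(\Gamma'\). There is no real obstacle here; the only thing one might consider emphasising is that the existence of such subhexagons is guaranteed by Remark \ref{rem:SubhexagonsSCHandTT}, but the corollary merely asserts a property conditional on \(\Gamma'\) being given, so this is not needed for the proof itself.
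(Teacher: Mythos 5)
Your proposal is correct and matches the paper's (implicit) proof exactly: the corollary is stated as an immediate consequence of Lemma \ref{lem:IdealSubhexagonBlocksLines}, with \(s=t=q\), \(k=1\), so \(k^2t = q = s\), and ideality following from \(t'=t\) as in Remark \ref{rem:IdealAndFull}. Nothing is missing.
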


\begin{corollary}\label{cor:SCHBlocksInTT}
	Let \(\SCHex\) be a split Cayley subhexagon of a twisted triality hexagon \(\TT\). Then, every line of \(\TT\) intersects a line of \(\SCHex\).
\end{corollary}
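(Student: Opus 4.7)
The plan is to apply Lemma \ref{lem:IdealSubhexagonBlocksLines} directly. The twisted triality hexagon \(\TT\) has order \((s,t) = (q^3, q)\), and a split Cayley subhexagon \(\SCHex\) has order \((q, q)\). By Remark \ref{rem:SubhexagonsSCHandTT}, such a subhexagon genuinely occurs inside \(\TT\), and by Remark \ref{rem:IdealAndFull}, since the \(t\)-parameter of \(\SCHex\) equals that of \(\TT\), the subhexagon is \emph{ideal}.

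It remains to verify the arithmetic hypothesis of Lemma \ref{lem:IdealSubhexagonBlocksLines}. Writing the order of the subhexagon as \((k,t) = (q,q)\), we have
\[
k^2 t = q^2 \cdot q = q^3 = s,
\]
which is exactly the required relation. Hence the lemma applies and every line of \(\TT\) meets a line of \(\SCHex\).

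There is essentially no obstacle here: this corollary is a pure specialisation of the counting argument carried out in Lemma \ref{lem:IdealSubhexagonBlocksLines}. The only conceptual point one should check is that the ``ideal'' qualifier is justified, which is immediate from the equality of the \(t\)-parameters via Remark \ref{rem:IdealAndFull}.
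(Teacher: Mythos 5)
Your proof is correct and is exactly the argument the paper intends: Corollary \ref{cor:SCHBlocksInTT} is stated as an immediate specialisation of Lemma \ref{lem:IdealSubhexagonBlocksLines} with \((s,t)=(q^3,q)\) and \((k,t)=(q,q)\), where \(k^2t=q^3=s\). Your additional check that the subhexagon is ideal (via the equality of the \(t\)-parameters, Remark \ref{rem:IdealAndFull}) matches the paper's implicit reasoning, so there is nothing to add.
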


\subsection{The standard embedding of \(\TT\) in \(\PG\)}\label{EmbeddedT}

We now consider the standard embedding of a twisted triality hexagon \(\TT=:\T\) as described in for instance Section 2.4 in \emph{Generalized Polygons} \cite{VanMaldeghem1998}. We briefly repeat some relevant parts but refer to the book for details and a more in depth explanation.

We start from a hyperbolic quadric \(\Qk =: \Q\) in \(\PGk\). 

The points on \(\Q\) have homogeneous coordinates satisfying the following equation:
\[x_0 x_4 + x_1 x_5 + x_2 x_6 + x_3 x_7 = 0.\]

Since \(\Q\) is determined by a quadratic equation, we have the following property which will be useful later:
\begin{lemma}\label{lem:IntersectionLineQuadric}
	Let \(L\) be a line of \(\PGk\) then \(L\) contains \(0,1,2\) or \(|\K|+1\) points of \(\Q\).
\end{lemma}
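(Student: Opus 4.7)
The plan is to parameterise the line and reduce the statement to the fact that a nonzero homogeneous quadratic in two variables has at most two projective zeros.

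First I would fix two distinct points $P = (p_0,\ldots,p_7)$ and $Q = (q_0,\ldots,q_7)$ of $\PGk$ spanning $L$, so that every point of $L$ has the form $sP + tQ$ for some $(s,t)\in\K^2\setminus\{(0,0)\}$, with two such pairs representing the same point if and only if they are scalar multiples of each other. Denoting by $f(x_0,\ldots,x_7) = x_0x_4 + x_1x_5 + x_2x_6 + x_3x_7$ the quadratic form defining $\Q$ and by $b$ its associated symmetric bilinear form, substitution gives
\begin{equation*}
  f(sP + tQ) = s^2 f(P) + st\, b(P,Q) + t^2 f(Q).
\end{equation*}
Thus the points of $L \cap \Q$ correspond to the projective zeros in $\mathsf{PG}(1,\K)$ of the homogeneous polynomial $\Phi(s,t) = \alpha s^2 + \beta st + \gamma t^2$ with $\alpha = f(P)$, $\beta = b(P,Q)$, $\gamma = f(Q)$.

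I would then split into two cases. If $\alpha = \beta = \gamma = 0$, then $\Phi$ is identically zero, so every point of $L$ lies on $\Q$, giving exactly $|\K|+1$ points. Otherwise $\Phi$ is a nonzero homogeneous polynomial of degree $2$ in $(s,t)$; after dehomogenising (restricting to the affine chart $t=1$ and treating the ``point at infinity'' $P$ separately when $\alpha = 0$) one is looking at the roots of a nonzero polynomial of degree at most $2$ over a field, hence there are at most $2$ projective zeros. The possible counts are therefore $0$, $1$ or $2$, which together with the first case exhausts the list in the statement.

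There is no real obstacle here: the only small point to be careful about is the book-keeping between affine and projective zeros, which is handled uniformly by noting that a nonzero binary quadratic form factors over the algebraic closure as a product of two linear forms and therefore has at most two projective roots over any field. No hypothesis beyond the defining equation of $\Q$ is used, so the lemma holds over an arbitrary field $\K$.
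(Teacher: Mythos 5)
Your proof is correct: the paper states this lemma without proof, merely noting that it follows from \(\Q\) being defined by a quadratic equation, and your parameterisation argument (restricting the form to the line and observing that a binary quadratic form is either identically zero or has at most two projective zeros) is precisely the standard justification being invoked. The only cosmetic point is that your cross term \(st\,b(P,Q)\) presumes \(b\) is the polar form \(b(x,y)=f(x+y)-f(x)-f(y)\), which is the right convention to use so that the identity holds in every characteristic, including characteristic \(2\).
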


The set of \(3\)-spaces in \(\Q\) is partitioned into two classes such that any two \(3\)-spaces from the same subclass meet in an odd dimensional subspace and any two \(3\)-spaces from different classes meet in an even dimensional subspace. Without loss of generality we call elements of one subclass \(3\)-spaces and elements of the second subclass \(3'\)-spaces.
We say that a \(3\)-space and a \(3'\)-space are incident if they meet in a plane.
Interestingly, both the \(3\)-spaces and the \(3'\)-space behave the same as the class of points and hence can be coordinatised in the same way.

We use the following trilinear form \(\mathcal{T}: V \times V \times V \to \K \), where \(V\) is an eight-dimensional vector space over \(\K\), to establish these coordinates:
\begin{align*}
	\mathcal{T}(x,y,z) = \,
	&\begin{vmatrix}
		x_0 &x_1 &x_2 \\
		y_0 &y_1 &y_2 \\
		z_0 &z_1 &z_2 
	\end{vmatrix}
	+
	\begin{vmatrix}
		x_4 &x_5 &x_6 \\
		y_4 &y_5 &y_6 \\
		z_4 &z_5 &z_6 
	\end{vmatrix} \\			
	&+x_3(z_0y_4 + z_1y_5+z_2y_6) + x_7(y_0z_4 + y_1z_5+y_2z_6) \\
	&+y_3(x_0z_4 + x_1z_5+x_2z_6) + y_7(z_0x_4 + z_1x_5+z_2x_6) \\
	&+z_3(y_0x_4 + y_1x_5+y_2x_6) + z_7(x_0y_4 + x_1y_5+x_2y_6) \\
	&-x_3y_3z_3-x_7y_7z_7. \\
\end{align*}

A point \(x\) now belongs to the \(3\)-space \(y\) if and only if \(\mathcal{T}(x,y,z')\) vanishes as a function of \(z'\). Similarly, a point \(x\) belongs to the \(3'\)-space \(z\) if and only if \(\mathcal{T}(x,y',z)\) vanishes as a function of \(y'\). A \(3\)-space \(y\) and a \(3'\)-space \(z\) are incident if and only if \(\mathcal{T}(x',y,z)\) is vanishes as a function of \(x'\).

The \(3\)-spaces and \(3'\)-spaces with their coordinates, will further be referred to as 1-points, \(\Points^{(1)}\), and 2-points, \(\Points^{(2)}\) respectively.
The original points will be referred to as \(0\)-points, \(\Points^{(0)}\).

\begin{example}\label{ex:Coordinate3Space}
	To find the \(3\)-space that corresponds to the \(1\)-point \\ \(y = (1,0,0,0,0,0,a,0)\), we substitute this into the trilinear form and require that the coefficients of all \(z_i\) vanish. This results in the following equations:
	\[\begin{cases}
		0 = x_2 \\
		0 = x_1 - a x_3 \\
		0 = x_4 \\
		0 = x_7 + a x_5. \\
	\end{cases}\]
\end{example}

We can now use the following triality which preserves incidence:
\begin{align*}
	\tau_\sigma: \Points^{(i)}&\to \Points^{(i+1)}: \\
	(x_0,x_1,x_2,x_3,x_4,x_5,x_6,x_7) &\to (x_0^\sigma,x_1^\sigma,x_2^\sigma,x_3^\sigma,x_4^\sigma,x_5^\sigma,x_6^\sigma,x_7^\sigma)
\end{align*}

where \(i = 0,1,2 \mod 3\) and \(\sigma\) is an automorphism of \(\K\) of order 1 or 3.

Let \(L\) be a line of \(\Q\). Any two points on \(L\) determine \(L\). Similarly we can see that \(L\) is defined by any two \(3\)-spaces through \(L\) and also by any two \(3'\)-spaces through \(L\).
Since \(\triality_\sigma\) preserves incidence, it is easy to see that the images of each such pair must determine the same line \(M\). Hence \(\triality_\sigma\) induces a map between all lines of \(\Q\).

With some abuse of notation, we see \(\triality_\sigma\) as a map on points, lines, \(3\)-spaces and \(3'\)-spaces of \(\Q\). Given a point \(x\) of \(\Q\), we write \(x^{\triality_\sigma}\) for the \(3\)-space corresponding to the image of \(x\) under \(\triality_\sigma\). Similarly, given a line \(L\) of \(\Q\), we write \(L^{\triality_\sigma}\) for the line which is its image under the map induced by \(\triality_\sigma\).

We call a point \(x\) of \(\Q\) \emph{absolute} with respect to \(\triality_\sigma\) if it is incident with its image under \(\triality_\sigma\), i.e. \(x\in x^{\triality_\sigma}\). We call a line of \(\Q\) absolute with respect to \(\triality_\sigma\) if it is fixed under the induced map from \(\triality_\sigma\), i.e. \(L = L^{\triality_\sigma}\). 

\begin{corollary}(See \cite[2.4.4]{VanMaldeghem1998})\label{cor:absoluteLine}
	Let \(x\) and \(x'\) be absolute points with respect to \(\triality_\sigma\). The point \(x'\) is contained in the \(3\)-space \(x^{\triality_\sigma}\) if and only if \(xx'\) is an absolute line with respect to \(\triality_\sigma\).
\end{corollary}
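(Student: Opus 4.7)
My plan is to reformulate ``\(L=xx'\) is absolute'' as a pair of point-subspace incidences and then grind the triality cycle. Applying the incidence-preserving \(\triality_\sigma\) to the incidences \(x\in L\) and \(x'\in L\) yields \(L^{\triality_\sigma}\subseteq x^{\triality_\sigma}\) and \(L^{\triality_\sigma}\subseteq x'^{\triality_\sigma}\) (line contained in 3-space). Since two distinct 3-spaces of the same class in \(\Qq\) meet in a line by the parity rule for generators, and since \(x^{\triality_\sigma}\neq x'^{\triality_\sigma}\) as \(\triality_\sigma\) is injective and \(x\neq x'\), one has \(L^{\triality_\sigma}=x^{\triality_\sigma}\cap x'^{\triality_\sigma}\). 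Hence \(L=L^{\triality_\sigma}\) iff \(L\subseteq x^{\triality_\sigma}\) and \(L\subseteq x'^{\triality_\sigma}\), and using the absoluteness of \(x\) and \(x'\) this in turn reduces to \(x'\in x^{\triality_\sigma}\) \emph{and} \(x\in x'^{\triality_\sigma}\). The direction ``\(L\) absolute \(\Rightarrow x'\in x^{\triality_\sigma}\)'' is then immediate.

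For the converse, assume \(x'\in x^{\triality_\sigma}\); the goal is to deduce \(x\in x'^{\triality_\sigma}\). Applying \(\triality_\sigma\) twice to the 0-point-in-3-space incidence \(x'\in x^{\triality_\sigma}\) cycles it through the 3-space-meets-3'-space-in-plane incidence between \(x'^{\triality_\sigma}\) and \(x^{\triality_\sigma^2}\) to the 0-point-in-3'-space incidence \(x\in x'^{\triality_\sigma^2}\). Together with \(x'\in x'^{\triality_\sigma^2}\), obtained by the same cycle from the absoluteness of \(x'\), this yields \(L\subseteq x'^{\triality_\sigma^2}\) (both endpoints lie in this 3'-space).

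I now cycle each of the two available line-in-subspace incidences separately. Two cycle steps on \(L\subseteq x^{\triality_\sigma}\) (line in 3-space) pass through \(L^{\triality_\sigma}\subseteq x^{\triality_\sigma^2}\) (line in 3'-space) and land at \(x\in L^{\triality_\sigma^2}\) (0-point on line). Two cycle steps on \(L\subseteq x'^{\triality_\sigma^2}\) (line in 3'-space) pass through \(x'\in L^{\triality_\sigma}\) (0-point on line) and land at \(L^{\triality_\sigma^2}\subseteq x'^{\triality_\sigma}\) (line in 3-space). Chaining these two conclusions gives \(x\in L^{\triality_\sigma^2}\subseteq x'^{\triality_\sigma}\), so \(x\in x'^{\triality_\sigma}\) as required.

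The main subtlety is bookkeeping the type of each incidence through the cycle: \(\triality_\sigma\) fixes the class of lines but permutes the three classes of points cyclically, and the geometric meaning of ``incidence'' changes at each step (0-point-in-3-space, 3-space-meets-3'-space-in-plane, 0-point-in-3'-space for the point-point cycle; line-in-3-space, line-in-3'-space, 0-point-on-line for the line cycles). Once the two relevant 3-element orbits are correctly written down, the combination closes the argument with no further input.
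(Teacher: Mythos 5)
The paper gives no proof of this corollary at all: it is imported directly from \cite[2.4.4]{VanMaldeghem1998}, so there is no internal argument to compare your proposal against, and what you have written is a self-contained proof of a fact the paper treats as known. Your argument is correct within the paper's setup. The two facts you rely on, namely that \(\triality_\sigma\) preserves every type of incidence (0-point/1-point, 1-point/2-point, 2-point/0-point, and all line--point-class incidences) and that \(\triality_\sigma^3\) is the identity, are exactly what the paper establishes when it constructs the induced map on lines, so every ``cycle step'' you perform is licensed; the heart of the matter, correctly identified, is that the relation \(x'\in x^{\triality_\sigma}\) is not visibly symmetric in \(x\) and \(x'\), and your two-step cycling of the incidences \(L\subseteq x^{\triality_\sigma}\) and \(L\subseteq x'^{\triality_\sigma^2}\) to obtain \(x\in L^{\triality_\sigma^2}\subseteq x'^{\triality_\sigma}\) closes that gap. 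Two small points deserve tightening. First, your parenthetical ``two distinct 3-spaces of the same class meet in a line by the parity rule'' is imprecise: two distinct generators of the same family meet in a line \emph{or} are disjoint; what saves you is that \(x^{\triality_\sigma}\) and \(x'^{\triality_\sigma}\) both contain \(L^{\triality_\sigma}\), so parity then forces the intersection to be exactly that line. Second, in the converse direction you should state explicitly at the outset that \(L=xx'\) is a line of \(\Q\) (it lies in \(x^{\triality_\sigma}\subseteq \Q\) because \(x\) is absolute and \(x'\in x^{\triality_\sigma}\) by hypothesis); without this the induced line map \(L\mapsto L^{\triality_\sigma}\) is not even defined, and your steps quietly assume it.
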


\begin{example}\label{ex:AbsoluteHexagon}
	Let \(e_i\) be the point with coordinates all zero except \(x_i = 1\). The triality \(\triality_\sigma\) maps \(e_0\) to the 3-space which has the following equations (see Example \ref{ex:Coordinate3Space} with \(a=0\)): 
	\begin{align}
		x_1=x_2=x_4=x_7=0. \label{test}
	\end{align}
	This shows that \(e_0\) is absolute since it satisfies equation (1). Similarly, one can see that \(e_1,e_2,e_4,e_5\) and \(e_6\) are absolute. Since \(e_5\) and \(e_6\) also satisfy equation (1), Corollary \ref{cor:absoluteLine} indicates that \(e_0e_5\) and \(e_0e_6\) are absolute lines.
	One can show that all six lines of the hexagon \(\mathscr{H} = (e_0,e_6,e_1,e_4,e_2,e_5)\) are absolute.
\end{example}

\begin{remark}\label{rem:absoluteLinesInAPlane}
	It follows from Corollary \ref{cor:absoluteLine} that all absolute lines through a point \(x\) are contained in \(x^{\triality_\sigma}\). By considering \(\triality_\sigma^2\), which has the same absolute points, we also find that all absolute lines through \(x\) are contained in \(x^{\triality_\sigma^2}\). This shows that all absolute lines through an absolute point are contained in a plane, namely \(x^{\triality_\sigma}\cap x^{\triality_\sigma^2}\).
\end{remark}

\begin{example}\label{ex:PlaneOfAbsoluteLines}
	Let \(x\) be the point \((1,0,0,0,0,0,a,0)\). The \(3\)-space \(x^{\triality_\sigma}\) has coordinates \((1,0,0,0,0,0,a^\sigma,0)\). As in Example \ref{ex:Coordinate3Space} we obtain the following equations for this \(3\)-space:
	\[\begin{cases}
		0 = x_2 \\
		0 = x_1 - a^\sigma x_3 \\
		0 = x_4 \\
		0 = x_7 + a^\sigma x_5. \\
	\end{cases}\]
	Combining these with the equations of \(x^{\triality_\sigma^2}\), we find that the plane \(x^{\triality_\sigma}\cap x^{\triality_\sigma^2}\), which contains all absolute lines through \(x\), has the following equations:
	\[\begin{cases}
		0 = x_2 \\
		0 = x_4 \\
		0 = x_1 - a^\sigma x_3 \\
		0 = x_1 + a^{\sigma^2} x_7 \\
		0 = x_7 + a^\sigma x_5 \\
		0 = x_3 - a^{\sigma^2} x_5. \\
	\end{cases}
	\]
	
\end{example}

We are now ready for the following theorem which was first proven by Tits in 1959 \cite{Tits1959}. See also \cite[Section 2.4]{VanMaldeghem1998}.
\begin{theorem}[Tits]\label{thm:GHfromTriality}
	The geometry \(\Gamma = (\Points_\text{abs},\Lines_\text{abs},\I)\) arising from the triality \(\triality_\sigma\) is a generalised hexagon of order \((|\K|, |\K^{(\sigma)}|)\), where \(\K^{(\sigma)}\) is the subfield of \(\K\) of those elements fixed by \(\sigma\).
\end{theorem}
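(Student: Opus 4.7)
The strategy is to apply Corollary \ref{cor:incidenceGraph} by verifying that the incidence graph of \(\Gamma\) has girth \(12\) and diameter \(6\), supplemented with the two local counts giving the order \((|\K|,|\K^{(\sigma)}|)\). For the count of points on an absolute line \(L\), I would first show that absolute lines are ``full'': if \(L = L^{\triality_\sigma}\) and \(x\in L\), then incidence preservation under \(\triality_\sigma\) forces \(L = L^{\triality_\sigma} \subseteq x^{\triality_\sigma}\), so \(x\in x^{\triality_\sigma}\) is absolute, and hence \(L\) carries exactly \(|\K|+1\) absolute points. For the count of absolute lines through an absolute point \(x\), I would invoke Remark \ref{rem:absoluteLinesInAPlane} to restrict attention to the plane \(\pi_x = x^{\triality_\sigma}\cap x^{\triality_\sigma^2}\), and then parametrise \(\pi_x \cap \Q\) in the style of Example \ref{ex:PlaneOfAbsoluteLines}; imposing absoluteness on a variable point of that conic should reduce the remaining constraints to a condition of the form \(t^\sigma = t\) on a single free coordinate, which has exactly \(|\K^{(\sigma)}|+1\) projective solutions.

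The main work lies in verifying that the incidence graph has girth at least \(12\). Digons are immediate, since two absolute lines are in particular lines of \(\Q\) and so meet in at most one point. For cycles of combinatorial length \(k \in \{3,4,5\}\), I would proceed by contradiction: using the transitivity of the stabiliser of \(\triality_\sigma\) in \(\mathrm{Aut}(\Q)\) on flags of absolute elements, I would normalise the first few vertices of a hypothetical short cycle to coordinate vectors taken from \(\mathscr{H}\) as in Example \ref{ex:AbsoluteHexagon}; then the trilinear form \(\mathcal{T}\) evaluated against the remaining unknown vertex yields a system of linear equations in its coordinates whose solutions, when combined with the quadric equation and the absoluteness relation \(y \in y^{\triality_\sigma}\), force the vertex to coincide with one already used, contradicting simplicity of the cycle. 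This case-by-case coordinate computation, certifying that the order-\(3\) triality admits no fixed short circuits on \(\Q\), is the main technical obstacle.

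Diameter \(6\) then follows by counting: with girth \(12\) and the two local numbers \(|\K|+1\) and \(|\K^{(\sigma)}|+1\) in hand, the size of the \(\le 6\)-neighbourhood of a fixed absolute element in the incidence graph is at most the expressions of Corollary \ref{cor:Size}; I would then match this upper bound with a direct count of \(\Points_\text{abs}\) and \(\Lines_\text{abs}\) obtained by solving the system \(y \in y^{\triality_\sigma}\) intersected with \(\Q\), concluding that equality must hold and hence that every absolute element lies within six steps of any other. With girth \(12\) and diameter \(6\) established, Corollary \ref{cor:incidenceGraph} identifies \(\Gamma\) as a weak generalised hexagon of the stated order, and thickness follows automatically once \(|\K^{(\sigma)}| \ge 2\).
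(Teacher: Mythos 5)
A preliminary remark: the paper does not actually prove this theorem. It attributes the result to Tits \cite{Tits1959} and refers to \cite[Section 2.4]{VanMaldeghem1998} for a proof, so your proposal can only be measured against that standard proof. Your skeleton has the right shape (the incidence-graph criterion of Corollary \ref{cor:incidenceGraph} plus the two local counts), and two ingredients are sound: the fullness of absolute lines (incidence preservation indeed gives \(L = L^{\triality_\sigma}\subseteq x^{\triality_\sigma}\), hence every point of \(L\) is absolute), and the reduction of the count of absolute lines through \(x\) to a condition of the form \(t^\sigma = t\) inside the plane \(x^{\triality_\sigma}\cap x^{\triality_\sigma^2}\). (One slip there: that plane lies entirely on \(\Q\), since \(x^{\triality_\sigma}\) is a generator of the quadric, so there is no conic to intersect; what you parametrise is the pencil of lines through \(x\) in that plane.)

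The genuine gaps are in the two pillars of the argument. First, girth: you do not prove the absence of triangles, quadrangles and pentagons --- you defer it to an unspecified ``case-by-case coordinate computation'', and this is the entire content of the theorem. Worse, the normalisation device you propose for it is circular: transitivity of the stabiliser of \(\triality_\sigma\) on flags of absolute elements (in fact you would need transitivity on paths of length up to four to pin down a hypothetical pentagon) is essentially the flag-transitivity of \(\mathsf{G}_2(\K)\), resp.\ \({}^3\mathsf{D}_4(\K)\), which is standardly derived \emph{from} the hexagon structure --- the very statement being proved --- or from algebraic-group machinery far heavier than the theorem itself. Note that the paper invokes transitivity on ordered hexagons only downstream of Theorem \ref{thm:GHfromTriality} (in Lemma \ref{lem:ThreeLinesIntersectingACommonLine}, citing \cite[Chapter 4 \& 5]{VanMaldeghem1998}). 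The proof in \cite[Section 2.4]{VanMaldeghem1998} needs no group action: short circuits are excluded by direct geometric arguments on \(\Q\), using Corollary \ref{cor:absoluteLine}, the fact that \(x^{\triality_\sigma}\) and \(x^{\triality_\sigma^2}\) are generators of \(\Q\), and the fact that \(\triality_\sigma\) has order \(3\). Second, diameter: your Moore-bound matching presupposes a closed-form count of \(|\Points_\text{abs}|\) and \(|\Lines_\text{abs}|\), which you also leave unexecuted, and more importantly it only makes sense for finite \(\K\), whereas the theorem and the whole of Section \ref{EmbeddedT} are stated over an arbitrary field \(\K\) (finiteness enters only in Definition \ref{def:SCandTT}); the standard proof instead establishes diameter \(6\) by explicitly constructing connecting paths on the quadric, uniformly in \(\K\).
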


\begin{definition}\label{def:SCandTT}
	By setting \(\sigma = 1\) we can produce a generalised hexagon over any finite field \(\F_q\) which is known as the \emph{split Cayley hexagon} and denoted by \(\SCHex\).
	
	For \(\sigma \neq 1\) we call the generalised hexagon obtained using theorem \ref{thm:GHfromTriality} a \emph{twisted triality hexagon}. Over a finite field, \(\K\) needs to be a Galois extension of degree 3 of \(\K^{(\sigma)}\) and we write \(\TT\) for the unique twisted triality hexagon over \(\F_{q^3}\).  
\end{definition}

\begin{remark}\label{rem:SplitCayleyHexagonInHyperplane}\label{rem:SplitCayleySubhexagon}
	Tits also proved in \cite{Tits1959} that all points and lines of the split Cayley hexagon constructed as above are contained in the hyperplane of \(\PG\) with equation \(x_3+x_7=0\).
	By restricting the coordinates of a twisted triality hexagon \(\TT\) to \(\F_q\), we find this same split Cayley hexagon as an ideal subhexagon.
\end{remark}

The described embedding of the generalised hexagons \(\SCHex\) and \(\TT\) is considered their \emph{natural} embedding.
From now on, we consider a twisted triality hexagon \(\TT =: \T\) which is naturally embedded in a quadric \(\Qq=:\Q\) in \(\PG\). To keep the reference to the generalised hexagon, we will write the points and lines of \(\T\) as \(\Pt\) and \(\Lt\) respectively.

\begin{remark}\label{rem:Full}
	Since \(\T\) has order \((q^3,q)\), for every line of \(\T\), all \(q^3+1\) points of \(\PG\) on that line are contained in \(\T\). This was precisely the definition of a full subgeometry (Definition \ref{def:FullAndIdeal}).
\end{remark}

In Remark \ref{rem:absoluteLinesInAPlane} we already observed the following property which we now formalise:
\begin{lemma}\label{lem:Flat}
	For every point \(x\in\Pt\) the set of collinear points in \(\Pt\) is contained in a plane of \(\PG\). 
\end{lemma}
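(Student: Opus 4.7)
The plan is to show that the plane \(\pi := x^{\triality_\sigma} \cap x^{\triality_\sigma^2}\) singled out in Remark \ref{rem:absoluteLinesInAPlane} already contains every point of \(\Pt\) collinear with \(x\). The statement is essentially a repackaging of that remark, once fullness of the embedding (Remark \ref{rem:Full}) is invoked, so the proof should be very short.

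First I would unpack what ``collinear in \(\Pt\)'' means: two points of \(\T\) are collinear precisely when there is a line of \(\T\) joining them, and any point \(y\in\Pt\) collinear with \(x\) therefore lies on some absolute line \(L\) through \(x\). By Remark \ref{rem:Full}, the full set of \(q^3+1\) projective points of \(L\) belongs to \(\Pt\); in particular \(y\) is a projective point of \(L\) in \(\PG\).

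Next, Remark \ref{rem:absoluteLinesInAPlane} tells us that every absolute line through the absolute point \(x\) is contained in \(\pi\). Hence \(L\subseteq \pi\), and consequently \(y\in \pi\). As \(y\) was arbitrary, the set of points of \(\Pt\) collinear with \(x\) is contained in the plane \(\pi\), which is what is to be shown.

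There is no real obstacle here: the only content beyond Remark \ref{rem:absoluteLinesInAPlane} is the passage from ``lines through \(x\) lie in a plane'' to ``points collinear with \(x\) lie in a plane'', and this is immediate from fullness. If desired, one can also remark that \(\pi\) is genuinely two-dimensional, since \(x^{\triality_\sigma}\) and \(x^{\triality_\sigma^2}\) are two distinct \(3\)-spaces on \(\Q\) both containing \(x\), as already witnessed by the explicit equations in Example \ref{ex:PlaneOfAbsoluteLines}.
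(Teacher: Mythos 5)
Your proposal is correct and takes essentially the same route as the paper: the paper gives no separate argument for this lemma, treating it as the formalisation of Remark \ref{rem:absoluteLinesInAPlane}, i.e.\ every line of \(\Lt\) through \(x\) lies in the plane \(x^{\triality_\sigma}\cap x^{\triality_\sigma^2}\), so every point of \(\Pt\) collinear with \(x\) does too. Your additional remarks (invoking fullness to place collinear points on the lines, and checking that the intersection is genuinely a plane) are harmless refinements of the same argument.
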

We call any embedding which has this property \emph{flat}.

From now on we will use \(\polarity\) for the polarity on \(\PG\) generating \(\Q\) and \(\triality\) for the triality on \(\Q\) generating \(\T\) (We generally do not need the automorphism \(\sigma\) and therefore simplify \(\triality_\sigma\) to just \(\triality\)).

From the construction (see also Corollary \ref{cor:absoluteLine}) we observe the following:

\begin{corollary}\label{cor:trialityBasic}
	For any point \(x\) of \(\Pt\) we have:
	\begin{enumerate}
		\item\label{cor:trialityBasic:points} \(\Pt(x^\triality) = \Pt_{\le 2}(x).\)
		\item\label{cor:trialityBasic:lines} \(\Lt(x^\triality) = \Lt_{\le 1}(x).\)
		\item\label{cor:trialityBasic:noIso} The subspace \(x^\triality\) does not contain any isolated points.
		\item\label{cor:trialityBasic:noIsoPl} The subspace \(\langle \Pt_{\le 2}(x)\rangle\) does not contain any isolated points.
	\end{enumerate}
\end{corollary}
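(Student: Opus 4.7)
My plan is to deduce each of the four items from Corollary \ref{cor:absoluteLine} (which is the incidence-theoretic translation of the triality setup) together with the no-small-polygons property of the generalised hexagon. Items (1) and (2) are the substance; (3) and (4) then follow almost immediately.

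First, for (1): given $x' \in \Pt$ with $x' \neq x$, Corollary \ref{cor:absoluteLine} says $x' \in x^\triality$ if and only if the line $xx'$ is absolute, equivalently $xx' \in \Lt$, equivalently $\delta(x, x') = 2$. Since $x$ is itself absolute we also have $x \in x^\triality$, and combining these identifies $\Pt(x^\triality) = \Pt_{\le 2}(x)$.

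Next, for (2): if $L \in \Lt$ passes through $x$, pick any second point $y \in L$; both $x$ and $y$ lie in $x^\triality$ by (1), so $L = \langle x, y \rangle \subseteq x^\triality$. Conversely, suppose $L \in \Lt$ is contained in $x^\triality$ but $x \notin L$. Choose distinct points $y_1, y_2 \in L$; by (1) each $y_i$ is collinear with $x$ in $\T$, so there are unique lines $L_i \in \Lt$ through $x$ and $y_i$. If $L, L_1, L_2$ are pairwise distinct, we obtain a $6$-cycle $x - L_1 - y_1 - L - y_2 - L_2 - x$ in the incidence graph, contradicting the girth of $12$ from Corollary \ref{cor:incidenceGraph}; any coincidence among these three lines either forces $x \in L$ directly or produces an even shorter closed walk. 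Hence $x \in L$, proving $\Lt(x^\triality) = \Lt_{\le 1}(x)$.

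Finally, (3) and (4) fall out of (1) and (2). For (3), take $p \in \Pt(x^\triality)$; by (1), either $p = x$, in which case every line of $\Lt$ through $p$ lies in $x^\triality$ by (2) (so $p$ is in fact ideal), or $\delta(x,p) = 2$, in which case the unique line of $\Lt$ through $p$ and $x$ is contained in $x^\triality$ by (2). Either way $p$ is not isolated. For (4), set $V := \langle \Pt_{\le 2}(x) \rangle$. Any $L \in \Lt$ through $x$ has all $q^3 + 1$ of its points in $\Pt_{\le 2}(x) \subseteq V$, so $L \subseteq V$; since also $V \subseteq x^\triality$, item (1) places every point of $\Pt \cap V$ in $\Pt_{\le 2}(x)$, and the dichotomy just used for (3) now produces, for any such point $p$, a line of $\Lt$ through $p$ lying inside $V$. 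The only step that requires genuine care is the girth argument in (2), which must rule out the degenerate subcases where two of $L, L_1, L_2$ coincide; everything else is a direct translation between the triality and incidence viewpoints.
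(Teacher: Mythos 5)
Your proposal is correct and takes essentially the same approach as the paper: the paper states this corollary without a written proof, as an observation following ``from the construction (see also Corollary \ref{cor:absoluteLine})'', and your argument is exactly the intended elaboration --- item (1) from Corollary \ref{cor:absoluteLine}, item (2) from (1) together with fullness of the lines and the absence of triangles/digons (girth $12$), and items (3), (4) as immediate consequences. Your handling of the degenerate coincidences among $L, L_1, L_2$ in (2) is sound and completes the argument.
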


\begin{lemma}\label{lem:SolidsInQ}
	Any solid of \(\Q\) contains at most \(q+1\) elements of \(\Lt\).
\end{lemma}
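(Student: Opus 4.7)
The plan is to exploit that any solid contained in \(\Q\) is a maximal singular subspace of the hyperbolic quadric \(\mathsf{Q}^+(7,q^3)\) and hence is either a 1-point (3-space of the first family) or a 2-point (3'-space). By the bijections \(\triality\colon \Points^{(0)}\to\Points^{(1)}\) and \(\triality^2\colon \Points^{(0)}\to\Points^{(2)}\), such a solid \(U\) can be written uniquely as \(U=x^{\triality}\) or \(U=x^{\triality^2}\) for a 0-point \(x\) of \(\Q\).

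The main step is to show that every line \(L\in\Lt\) with \(L\subset U\) must pass through \(x\). Suppose first that \(U=x^{\triality}\); then \(L\subset U\) is an incidence between the line \(L\) and the 1-point \(U\). Applying \(\triality\) twice and using that it preserves incidence while permuting the three types cyclically, this becomes an incidence between the line \(L^{\triality^2}\) and the 0-point \(x^{\triality^3}=x\), i.e.\ \(x\in L^{\triality^2}\). Since \(L\) is absolute (\(L=L^{\triality}\)), we have \(L^{\triality^2}=L\), and so \(x\in L\). The case \(U=x^{\triality^2}\) is entirely analogous, applying \(\triality\) just once.

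From here the bound follows quickly. If \(x\notin\Pt\), then no line of \(\T\) passes through \(x\), so \(U\) contains no element of \(\Lt\). If \(x\in\Pt\), then the \(q+1\) lines of \(\T\) through \(x\) all lie in the plane \(x^{\triality}\cap x^{\triality^2}\subseteq U\) by Remark~\ref{rem:absoluteLinesInAPlane}, and by the previous step these are exactly the elements of \(\Lt\) contained in \(U\). In either case, \(U\) contains at most \(q+1\) elements of \(\Lt\). The only mild subtlety in the argument is careful bookkeeping of the action of \(\triality\) on different types of elements, since incidence between a line and a 0-point, 1-point, or 2-point means, respectively, point-on-line, line-in-3-space, and line-in-3'-space; once this is tracked correctly, the result is essentially immediate.
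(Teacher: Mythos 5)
Your proof is correct and rests on the same key idea as the paper's: the triality image of the solid is a \(0\)-point of \(\Q\), and since \(\triality\) preserves incidence and the lines of \(\Lt\) are absolute, every line of \(\Lt\) in the solid must contain that point. The paper phrases the endgame slightly differently (any two lines of \(\Lt\) in the solid meet, so the bound follows from the absence of triangles and the fact that at most \(q+1\) lines of \(\Lt\) pass through a point), whereas you identify the common point explicitly as \(U^{\triality}\) or \(U^{\triality^2}\) and count via fullness and the order of the hexagon, but this is the same argument in substance.
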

\begin{proof}
	We prove that in any solid of \(\Q\) every two lines of \(\Lt\) intersect. The statement then follows from the fact that there are no triangles and at most \(q+1\) lines of \(\Lt\) through any given point.
	
	Assume that \(\Sigma\) is a solid of \(\Q\) which contains two lines \(L\) and \(M\) of \(\Lt\). Either \(\Sigma^\triality\) or \(\Sigma^{\triality^2}\) is a point of \(\Q\). Since \(L\) and \(M\) are absolute and \(\triality\) preserves incidence, we find that this point is on both \(L\) and \(M\).
\end{proof}

The construction and Lemma \ref{lem:IsolatedPointsInSandwichedSubspace} imply the following:
\begin{lemma}\label{lem:pointTangent}
	Given a point \(x\in\Pt\):
	\begin{enumerate}
		\item\label{lem:pointTangent:points} \(\Pt(x^\polarity) = \Pt_{\le 4}(x)\).
		\item\label{lem:pointTangent:lines}\(\Lt(x^\polarity) = \Lt_{\le 3}(x)\).
		\item\label{lem:pointTangent:noIso} The subspace \(x^\polarity\) contains no isolated points.
		\item\label{lem:pointTangent:noIsoSub} Any subspace of \(x^\polarity\) containing \(\Pt_{\le 2}(x)\), contains no isolated points.
	\end{enumerate}
\end{lemma}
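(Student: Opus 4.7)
The plan is to establish item (1) first and derive items (2)--(4) from it. For the easy inclusion $\Pt_{\le 4}(x) \subseteq \Pt(x^\polarity)$, I argue geometrically. If $y \in \Pt_2(x)$, the line $xy$ is an absolute line, hence contained in $\Q$, so $xy \subseteq x^\polarity$ and $y \in x^\polarity$. If $y \in \Pt_4(x)$, let $m = x \bowtie y$; by Remark \ref{rem:absoluteLinesInAPlane}, the plane $\pi_m := m^\triality \cap m^{\triality^2}$ of absolute lines through $m$ lies in $\Q$, since both $m^\triality$ and $m^{\triality^2}$ are 3-spaces on $\Q$. This plane contains $x$ and $y$ via the absolute lines $mx$ and $my$, so $xy \subseteq \pi_m \subseteq \Q$, which puts $y \in x^\polarity$ (any plane on $\Q$ through $x$ lies in the tangent hyperplane at $x$).

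The reverse inclusion $\Pt(x^\polarity) \subseteq \Pt_{\le 4}(x)$ is the main obstacle: it amounts to showing that opposite points of $\T$ are never perpendicular under $\polarity$. My plan is to reduce this to the split Cayley setting. Given $x, y$ opposite in $\T$, Remark \ref{rem:SubhexagonsSCHandTT} places them in an ideal split Cayley subhexagon $\SCHex$ of $\T$, and by Corollary \ref{cor:DistanceInSub} they remain opposite in $\SCHex$. Applying an automorphism of $\T$ induced from $\Q$, I may further assume $\SCHex$ is the concrete subhexagon described in Remark \ref{rem:SplitCayleySubhexagon}, naturally embedded in a $\mathsf{PG}(6, q)$ inside $\PG$ and sitting on a parabolic section $\mathsf{Q}(6, q)$ of $\Q$. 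The tangent hyperplane of $\mathsf{Q}(6, q)$ at $x$ is exactly $x^\polarity \cap \mathsf{PG}(6, q)$, so the split Cayley analog of item (1) from \cite{ThasVanMaldeghem2008} yields $y \notin x^\polarity \cap \mathsf{PG}(6, q)$; since $y \in \mathsf{PG}(6, q)$, this forces $y \notin x^\polarity$, a contradiction.

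Items (2)--(4) then follow from (1). For (2), a line $L \in \Lt$ lies in $x^\polarity$ iff all its points do, iff by (1) $L$ avoids $\Pt_6(x)$; this is exactly $\delta(x, L) \le 3$, since a line at distance $5$ has its $q^3$ non-closest points at distance $6$. For (3), every $y \in \Pt(x^\polarity) = \Pt_{\le 4}(x)$ carries an absolute line lying in $x^\polarity$: for $y$ at distance $\le 2$, such a line sits in $x^\triality \subseteq x^\polarity$; for $y \in \Pt_4(x)$ with midpoint $m$, the absolute line $my$ has both endpoints in $\Pt_{\le 4}(x) \subseteq x^\polarity$ and hence lies in the hyperplane $x^\polarity$. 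For (4), I apply Lemma \ref{lem:IsolatedPointsInSandwichedSubspace} with $W = \langle \Pt_{\le 2}(x) \rangle$ (the plane $\pi_x$ of absolute lines through $x$) and ambient $x^\polarity$: by (2), every line of $\Lt$ in $x^\polarity$ meets $\pi_x$, being contained in it when $\delta(x, L) = 1$ and meeting it at the unique closest point in $\Pt_2(x) \subseteq \pi_x$ when $\delta(x, L) = 3$; since $\pi_x$ and $x^\polarity$ are free of isolated points by Corollary \ref{cor:trialityBasic}(iv) and item (3), the lemma delivers the claim.
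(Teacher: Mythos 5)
Your items (2)--(4) are correct and essentially identical to the paper's own derivation: the paper also obtains (2) from (1) by the distance argument (a line in $x^\polarity$ cannot reach distance $5$, since it would then carry points opposite $x$), gets (3) by combining (1) and (2), and proves (4) by applying Lemma \ref{lem:IsolatedPointsInSandwichedSubspace} with $W = \langle \Pt_{\le 2}(x)\rangle$ and Corollary \ref{cor:trialityBasic}, exactly as you do. For item (1), however, the paper offers no argument at all: it records the equality $\Pt(x^\polarity) = \Pt_{\le 4}(x)$ as a property of $\polarity$ coming from the construction, citing Tits. Your attempt to actually prove (1) is therefore more ambitious than the paper; your inclusion $\Pt_{\le 4}(x) \subseteq \Pt(x^\polarity)$, via the singular plane $m^\triality \cap m^{\triality^2}$ through the midpoint $m = x \bowtie y$, is correct.

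The reverse inclusion is where you have a genuine gap: the step ``applying an automorphism of $\T$ induced from $\Q$, I may further assume $\SCHex$ is the concrete subhexagon described in Remark \ref{rem:SplitCayleySubhexagon}.'' This requires the group of automorphisms of $\T$ induced by collineations of $\PG$ preserving $\Q$ to act transitively on the set of \emph{ideal split Cayley subhexagons} of $\T$. That is a nontrivial fact (the stabiliser of such a subhexagon in ${}^3D_4(q)$ is a $G_2(q)$-subgroup, and the conjugacy of all such subhexagons needs a proof or a reference); it is strictly stronger than the transitivity on \emph{ordered hexagons} which the paper cites from \cite{VanMaldeghem1998} in Lemma \ref{lem:ThreeLinesIntersectingACommonLine}, and nothing in your proposal supplies it. You also cannot sidestep it via Theorem \ref{thm:KeyTheorem}(1), because the subhexagon of order $(q,q)$ is only laxly embedded in its span, and ``regularly lax'' presupposes the polarised property --- which is precisely what item (1) asserts, so that route is circular. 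The gap is repairable, and more cheaply than by the subhexagon detour: use the cited transitivity on ordered hexagons to map an apartment having $x$ and $y$ as opposite vertices onto the standard apartment $(e_0,e_6,e_1,e_4,e_2,e_5)$ of Example \ref{ex:AbsoluteHexagon}, sending $x \mapsto e_0$ and $y \mapsto e_4$; such a collineation preserves $\Q$ and hence commutes with $\polarity$, and the bilinear form of $\Q$ gives $B(e_0,e_4) = 1 \neq 0$, i.e.\ $e_4 \notin e_0^\polarity$. This shows directly that opposite points are never perpendicular, using only facts the paper already invokes, with no appeal to split Cayley subhexagons or to \cite{ThasVanMaldeghem2008}.
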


Lemma \ref{lem:pointTangent}(\ref{lem:pointTangent:points}), shows that all points not opposite a given point are contained in a hyperplane of \(\PG\). In \cite{ThasVanMaldeghem1998}, the authors call an embedding \emph{weak} precisely when it has this property. In other literature, e.g. \cite{SteinbachVanMaldeghem2004}, an embedding is considered \emph{polarised} when for any point \(x\), the set of points not opposite \(x\) does not
generate \(\PG\). An embedding which is both flat (see Lemma \ref{lem:Flat}) and polarised is called \emph{regular}.

\begin{corollary}\label{cor:oppositeIffopposite}
	Two points \(x\) and \(y\) of \(\Pt\) are opposite in \(\T\) if and only if they are opposite in \(\Q\).
\end{corollary}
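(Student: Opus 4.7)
The plan is to derive this corollary as a direct consequence of Lemma \ref{lem:pointTangent}(\ref{lem:pointTangent:points}), which identifies \(\Pt(x^\polarity)\) with \(\Pt_{\le 4}(x)\). The key observation is that each of the two notions of ``opposite'' has a clean characterisation in terms of the hyperplane \(x^\polarity\): two points \(x,y\in\Pt\) are opposite in the hexagon \(\T\) precisely when their incidence-graph distance equals \(6\), equivalently when \(y\notin\Pt_{\le 4}(x)\); and two points of the hyperbolic quadric \(\Q\) are opposite exactly when the line joining them is secant to \(\Q\), equivalently when \(y\notin x^\polarity\) (recall that \(x^\polarity\) is the tangent hyperplane to \(\Q\) at \(x\), containing every point of \(\Q\) that lies with \(x\) on a line of \(\Q\)).

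First I would fix \(x\in\Pt\) and let \(y\in\Pt\) be any other point. I would then write out the two logical chains:
\begin{align*}
y \text{ opposite } x \text{ in } \T \quad &\Longleftrightarrow\quad y\notin\Pt_{\le 4}(x) \\
&\Longleftrightarrow\quad y\notin \Pt(x^\polarity) \\
&\Longleftrightarrow\quad y\notin x^\polarity \\
&\Longleftrightarrow\quad y \text{ opposite } x \text{ in } \Q,
\end{align*}
where the second equivalence is exactly Lemma \ref{lem:pointTangent}(\ref{lem:pointTangent:points}), and the third holds because \(y\) is assumed to lie in \(\Pt\subseteq\Q\) so that membership in \(x^\polarity\) coincides with membership in \(\Pt(x^\polarity)\).

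There is no real obstacle: the substance of the corollary has already been absorbed into Lemma \ref{lem:pointTangent}(\ref{lem:pointTangent:points}), which in turn rests on the standard fact from \cite{Tits1959} that \(x^\polarity\) meets \(\T\) exactly in the points not opposite \(x\). The only point that deserves a brief sentence is the equivalence between ``opposite in \(\Q\)'' and ``\(y\notin x^\polarity\)'', which is the definition of opposite points on a non-degenerate quadric via its associated polarity.
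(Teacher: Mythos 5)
Your proposal is correct and matches the paper's intent exactly: the paper states this as an immediate corollary of Lemma \ref{lem:pointTangent}(\ref{lem:pointTangent:points}) with no written proof, and your chain of equivalences (opposite in \(\T\) iff \(y\notin\Pt_{\le 4}(x) = \Pt(x^\polarity)\), which for \(y\in\Pt\) means \(y\notin x^\polarity\), i.e.\ \(x\) and \(y\) are non-collinear on \(\Q\)) is precisely the intended argument. The one step you single out for comment — that ``opposite in \(\Q\)'' means \(y\notin x^\polarity\) — is indeed just the standard polar-space definition, so nothing further is needed.
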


\begin{lemma}\label{lem:lineTangent}
	Given a line \(L\in\Lt\):
	\begin{enumerate}
		\item\label{lem:lineTangent:points} \(\Pt(L^\polarity) = \Pt_{\le 3}(L)\).
		\item\label{lem:lineTangent:lines} \(\Lt(L^\polarity) = \Lt_{\le 2}(L)\).
		\item\label{lem:lineTangent:noIso} The subspace \(L^\polarity\) contains no isolated points.
		\item\label{lem:lineTangent:noIsoSub} Any subspace of \(L^\polarity\) containing \(L\), contains no isolated points.
	\end{enumerate}
\end{lemma}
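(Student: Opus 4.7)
The proof will mirror the structure of Lemma \ref{lem:pointTangent}: claim (1) feeds into (2), both feed into (3), and (3) combined with Lemma \ref{lem:IsolatedPointsInSandwichedSubspace} yields (4). The key translation throughout will be to pass from the non-opposite condition of Lemma \ref{lem:pointTangent}(1) to the distance condition \(\delta(\cdot, L) \le 3\) in \(\T\).

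For (1), I would use linearity of the polarity to write \(L^\polarity = x^\polarity \cap y^\polarity\) for any two distinct points \(x, y\) on \(L\); by Lemma \ref{lem:pointTangent}(1), a point \(z \in \Pt\) then lies in \(L^\polarity\) iff it is non-opposite to every point of \(L\). Since distances from a point to a line in \(\T\) are odd, this is equivalent to \(\delta(z, L) \le 3\): the only alternative is \(\delta(z, L) = 5\), in which case Corollary \ref{cor:UniqueClosest} forces all but one point of \(L\) to be at distance \(6\) from \(z\). For (2), a line \(M \in \Lt\) lies in \(L^\polarity\) iff all \(q^3+1\) of its (automatically \(\Pt\)-)points do, by Remark \ref{rem:Full}, which by (1) amounts to every point of \(M\) lying at distance at most \(3\) from \(L\). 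A case analysis on \(\delta(M, L) \in \{0, 2, 4, 6\}\) then reduces this to \(\delta(M, L) \le 2\).

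For (3), given \(x \in L^\polarity \cap \Pt\) with some line of \(\Lt\) through it, part (1) gives \(\delta(x, L) \in \{1, 3\}\): if \(x \in L\) I would take \(L\) itself; if \(\delta(x, L) = 3\), Corollary \ref{cor:UniqueClosest} provides a unique closest point \(u \in L\) at distance \(2\) from \(x\), and the line \(\langle x, u \rangle \in \Lt\) meets \(L\), so by (2) lies in \(L^\polarity\). Part (4) then follows from Lemma \ref{lem:IsolatedPointsInSandwichedSubspace} applied with \(W = L\) and \(U = L^\polarity\): every line of \(\Lt\) in \(L^\polarity\) meets \(L\) by (2), \(L^\polarity\) has no isolated points by (3), and \(L\) trivially has none since \(L \in \Lt\). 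I expect the main difficulty to be the \(\delta(M, L) = 4\) case of (2): one must rule out that every point of such an \(M\) lies in \(\Pt_{\le 3}(L)\). Assuming a length-\(4\) path \(M - u - N - v - L\) realizes \(\delta(M, L) = 4\), a second point \(u' \ne u\) on \(M\) with \(\delta(u', L) = 3\) would, upon concatenation along \(M\), produce a closed walk of length \(8\) in the incidence graph, contradicting its girth of \(12\).
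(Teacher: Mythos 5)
Your proof is correct and takes essentially the same approach as the paper's: part (1) via \(L^\polarity = x^\polarity \cap y^\polarity\) together with Lemma \ref{lem:pointTangent}(\ref{lem:pointTangent:points}), part (2) by fullness of the lines and the distance analysis, part (3) from (1) and (2), and part (4) from Lemma \ref{lem:IsolatedPointsInSandwichedSubspace} applied with \(W = L\) and \(U = L^\polarity\). The only difference is that you make explicit some steps the paper treats as immediate, such as the girth argument ruling out \(\delta(M,L)=4\) in part (2), which amounts to the dual of the projection property in Corollary \ref{cor:UniqueClosest}.
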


In Remark \ref{rem:SplitCayleySubhexagon}, we mentioned that in the twisted triality hexagon \(\TT\) we find the split Cayley subhexagon \(\SCHex\), contained in the hyperplane with equation \(x_3+x_7=0\) (see Remark \ref{rem:SplitCayleyHexagonInHyperplane}). 
One can see that all points of \(\T\) in this hyperplane are on a line of this split Cayley subhexagon.
The points of \(\T\) on two or more lines in this hyperplane are precisely those of this split Cayley subhexagon.
 
We say that the hyperplane with equation \(x_3+x_7 = 0\) \emph{intersects \(\T\) in a split Cayley hexagon}.
More generally, 
Let \(V\) be a hyperplane of \(\PG\). We say that \(V\) \emph{intersects \(\T\) in a split Cayley hexagon} if the lines \(\Lt(V)\) together with the points of \(\Pt(V)\) on at least two of these lines, form an ideal split Cayley subhexagon of \(\T\).

We will see that a similar situation can occur for \(5\)-dimensional subspaces of \(\PG\).
We say that a \(5\)-dimensional subspaces \(U\) \emph{intersects \(\T\) in a subhexagon of order \((1,q)\)} if the lines \(\Lt(V)\) together with the points of \(\Pt(V)\) on at least two of these lines, form a subhexagon of \(\T\) of order \((1,q)\).

\section{The standard embedding of \(\TT\) in \(\PG\) satisfies the properties}\label{sec:EmbeddedTProp}

We are now ready to investigate the properties of the standard embedding of \(\T = \TT\).
\begin{lemma}\label{lem:propPt}
	\textbf{(Pt)} Every point of \(\PG\) is incident with \(0\) or \(q+1\) lines of \(\Lt\). 
\end{lemma}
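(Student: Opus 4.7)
The statement to prove is very direct once the naturality of the embedding is unpacked, so my plan is essentially a two-case dichotomy on whether the chosen point of \(\PG\) lies in \(\Pt\) or not.

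First I would observe that \(\T = \TT\) is a generalised hexagon of order \((q^3,q)\) in the sense of Section \ref{GenBack}, so by definition every point of \(\T\) (i.e.\ every point of \(\Pt\)) is incident with exactly \(t+1 = q+1\) lines of \(\T\), i.e.\ lines of \(\Lt\). This immediately handles the case when the chosen point lies in \(\Pt\).

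For the remaining case, I would use Remark \ref{rem:Full}: the natural embedding is full, meaning that for every line \(L \in \Lt\) all \(q^3+1\) points of \(\PG\) lying on \(L\) belong to \(\Pt\). Therefore if a point \(x\) of \(\PG\) is incident with any line of \(\Lt\), then \(x\) is forced to lie in \(\Pt\). Contrapositively, any point of \(\PG \setminus \Pt\) is incident with \(0\) lines of \(\Lt\).

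Combining the two cases gives exactly the dichotomy \(0\) or \(q+1\) required by \emph{(Pt)}. There is no real obstacle here: the content is entirely bookkeeping on the definitions of fullness and of the order of \(\T\), both of which were recorded in the background section.
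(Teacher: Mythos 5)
Your proof is correct and takes essentially the same route as the paper, which simply notes the statement follows from \(\T\) having order \((q^3,q)\) (with fullness of the embedding left implicit). Your version merely makes explicit, via Remark \ref{rem:Full}, why points of \(\PG\) outside \(\Pt\) lie on no line of \(\Lt\), which is a welcome but minor elaboration.
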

\begin{proof}
	Follows from \(\T\) having order \((q^3,q)\).
\end{proof}
\begin{lemma}\label{lem:pl}
	\textbf{(Pl)} Every plane of \(\PG\) is incident with \(0\), \(1\) or \(q+1\) lines of \(\Lt\).
\end{lemma}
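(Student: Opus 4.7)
The plan is to split on the number of lines of \(\Lt\) in a plane \(\pi\) of \(\PG\). The cases of \(0\) and \(1\) line require no argument, so the content of the lemma lies in showing that as soon as \(\pi\) contains two lines of \(\Lt\), it must contain exactly \(q+1\) such lines.

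First I would let \(L, M \in \Lt\) be two lines contained in \(\pi\). Being distinct and coplanar, they meet in a unique point \(x\), which lies on at least two lines of \(\T\) and hence is a point of \(\Pt\). Then I would invoke flatness (Lemma \ref{lem:Flat}): the union of the \(q+1\) lines of \(\Lt\) through \(x\) (there are precisely that many by (Pt), Lemma \ref{lem:propPt}) is contained in a plane \(\pi_x\). Since \(L\) and \(M\) are two distinct lines lying in both \(\pi\) and \(\pi_x\), these two planes must coincide: \(\pi = \pi_x\). This already shows that \(\pi\) contains at least the \(q+1\) lines of \(\Lt\) through \(x\).

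Next I would rule out any further line of \(\Lt\) in \(\pi\) not passing through \(x\). If such a line \(L' \in \Lt\) existed, then inside the plane \(\pi\) it would have to meet \(L\) in a point \(y\) and \(M\) in a point \(z\), with \(x, y, z\) pairwise distinct. The six incidences \(x \I L\), \(y \I L\), \(y \I L'\), \(z \I L'\), \(z \I M\), \(x \I M\) would then form a \(6\)-cycle in the incidence graph of \(\T\), i.e.\ a triangle in \(\T\), contradicting axiom (GH1). Therefore every line of \(\Lt\) in \(\pi\) passes through \(x\), and the count is exactly \(q+1\).

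There is no serious obstacle here; the argument is a straight-line combination of flatness (which pins \(\pi\) down once two lines of \(\Lt\) share a point), the pointwise count (Pt), and the absence of \(k\)-gons for \(k<6\) (which forbids a third line of \(\Lt\) in \(\pi\) off \(x\)). The only thing to be careful about is not forgetting this last step, since flatness alone gives the lower bound \(q+1\) but not the upper bound.
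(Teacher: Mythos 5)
Your proposal is correct and follows essentially the same route as the paper's (very terse) proof: fullness gives that the intersection point \(x\) of \(L\) and \(M\) lies in \(\Pt\), flatness together with (Pt) forces \(\pi\) to be the plane of the \(q+1\) lines through \(x\), and the absence of triangles excludes any further line of \(\Lt\) in \(\pi\). The only refinement worth making is that your step ``\(x\) lies on at least two lines of \(\T\) and hence is a point of \(\Pt\)'' silently uses fullness (Remark \ref{rem:Full}) --- without it, two hexagon lines could in principle cross in a point of \(\PG\) that is not a hexagon point --- which is exactly the citation the paper supplies at that point.
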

\begin{proof}
	Let \(\pi\) be a plane of \(\PG\) containing two lines \(L\) and \(M\) of \(\Lt\). Since \(\T\) is a fully embedded subgeometry (see Remark \ref{rem:Full}) the intersection of \(L\) and \(M\) is a point of \(\Pt\). The statement follows from \(\T\) being flat (see Lemma \ref{lem:Flat}), property (Pt) (Lemma \ref{lem:propPt}) and \(\T\) not having triangles.
\end{proof}

\begin{lemma}
	\textbf{(To)} \(|\Lt| \le q^9+q^8+q^5+q^4+q+1\).
\end{lemma}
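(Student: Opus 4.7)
The plan is to apply Corollary \ref{cor:Size} directly to the twisted triality hexagon \(\T = \TT\), which has order \((s,t) = (q^3, q)\). Substituting these values gives an exact count of \(|\Lt|\), and the stated bound will turn out to be an equality rather than a strict inequality.

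More precisely, I would compute
\[|\Lt| = (1+t)(1+st+s^2t^2) = (1+q)\bigl(1 + q^3\cdot q + q^6\cdot q^2\bigr) = (1+q)(1+q^4+q^8).\]
Expanding the product yields
\[(1+q)(1+q^4+q^8) = 1 + q^4 + q^8 + q + q^5 + q^9 = q^9 + q^8 + q^5 + q^4 + q + 1,\]
which is precisely the right-hand side of (To). Since the statement only requires \(\le\), this is more than enough.

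There is no real obstacle here: the whole argument is a one-line application of the standard counting formula for finite generalised hexagons of a given order, together with a short algebraic expansion. The only thing to note is that the inequality in (To) is tight for the natural embedding; the weaker formulation is adopted because in Theorem \ref{thm:Main1.5:4dIsTwistedTriality} we start from a line set \(\Lines\) that is only assumed to satisfy \(|\Lines|\le q^9+q^8+q^5+q^4+q+1\) (it is not a priori known to come from a generalised hexagon, so the exact count is not yet available on that side).
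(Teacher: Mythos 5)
Your proposal is correct and is exactly the paper's argument: the authors also prove (To) by noting that \(\T\) has order \((q^3,q)\) and invoking Corollary \ref{cor:Size}, which gives \(|\Lt| = (1+q)(1+q^4+q^8) = q^9+q^8+q^5+q^4+q+1\), so the bound holds with equality. Your expansion of the product and your remark about why (To) is phrased as an inequality are both accurate; nothing is missing.
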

\begin{proof}
	Follows from \(\T\) having order \((q^3,q)\) and Corollary \ref{cor:Size}.
\end{proof}

We will use the following observation:
\begin{lemma}\label{lem:SupSubspace}
	Let \(V\) be an \(n\)-dimensional \(\Lt\)-supported (see Definition \ref{def:LSupported}). subspace of \(\PG\).
	Then, there exists a \(\Lt\)-supported subspace \(U\) of \(V\) with \(\dim(U) = n-1\) or \(\dim(U) = n-2\) and a line \(L\) of \(\Lt\) such that \(V = \langle U,L \rangle\). 
\end{lemma}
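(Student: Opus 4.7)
The plan is to start from a minimal spanning set of lines inside $V$ and simply discard one line. Since $V$ is $\Lt$-supported, the set $\Lt(V)$ of lines of $\Lt$ contained in $V$ spans $V$. Extract from $\Lt(V)$ a minimal subset $\{L_1,\dots,L_k\}$ whose span is still $V$.

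Set $L := L_k$ and $U := \langle L_1,\dots,L_{k-1}\rangle$. By the minimality of the chosen set, $L \not\subseteq U$ (otherwise $\{L_1,\dots,L_{k-1}\}$ would already span $V$, contradicting minimality). Since $L$ is a projective line not contained in $U$, the intersection $L\cap U$ is either empty or a single point, so $\dim\langle U,L\rangle - \dim U \in \{1,2\}$. Because $\langle U,L\rangle = V$, this yields $\dim U \in \{n-1,n-2\}$, as required.

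It remains to check that $U$ is itself $\Lt$-supported. This is immediate from the construction: the lines $L_1,\dots,L_{k-1}$ belong to $\Lt$, are all contained in $U$, and span $U$ by definition. Hence $U$ satisfies Definition \ref{def:LSupported}, and $V = \langle U,L\rangle$ with $L \in \Lt$.

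There is no real obstacle here; the only subtlety is ensuring $L\not\subseteq U$, which is handled by invoking minimality of the spanning set rather than trying to remove an arbitrary line of $\Lt(V)$. No hexagon-theoretic input (flatness, polarity, triality, etc.) is needed — the statement is purely a linear-algebraic property of subspaces generated by lines.
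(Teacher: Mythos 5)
Your proof is correct and matches the paper's approach: the paper's own (one-line) proof is precisely the observation that removing a line from a spanning set of lines of \(\Lt(V)\) drops the dimension of the span by \(0\), \(1\) or \(2\), which is what your minimal-spanning-subset argument formalises. Your write-up just makes explicit the points the paper leaves implicit (that \(L\not\subseteq U\) and that \(U\) is \(\Lt\)-supported), so there is nothing genuinely different to compare.
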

\begin{proof}
	Every line we remove from the set \(\Lt\) decreases the dimension of the space spanned by this set by either \(0,1\) or \(2\).
\end{proof}

In view of this, we can restate what we know about planes:

\begin{lemma}\label{lem:2supported}
	Let \(\pi\) be a \(2\)-dimensional \(\Lt\)-supported subspace of \(\PG\). 
	
	Then, \(\pi\) contains \(q+1\) lines of \(\Lt\) which all meet in a point \(x\) and, \(\pi\) does not contain isolated points.
\end{lemma}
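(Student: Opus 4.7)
The plan is to prove the three assertions in order: (i) $\pi$ contains exactly $q+1$ lines of $\Lt$, (ii) those lines pass through a common point $x\in\Pt$, and (iii) $\pi=\langle \Pt_{\le 2}(x)\rangle$, from which the ``no isolated points'' claim is immediate via Corollary~\ref{cor:trialityBasic}(\ref{cor:trialityBasic:noIsoPl}).

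For (i), I would apply property (Pl) (Lemma~\ref{lem:pl}), which says the number of lines of $\Lt$ in any plane is in $\{0,1,q+1\}$. Since $\pi$ is $\Lt$-supported, it is spanned by lines of $\Lt$, and a plane cannot be spanned by fewer than two lines, so the only possibility is $q+1$.

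For (ii), any two of these $q+1$ lines lie in the plane $\pi$ and hence meet in a point of $\PG$; since $\T$ is fully embedded (Remark~\ref{rem:Full}), every such intersection point is in $\Pt$. If three of the lines met pairwise in three distinct points of $\Pt$ we would obtain a triangle in $\T$, contradicting (GH1). Hence all $q+1$ lines pass through a single common point $x\in\Pt$. By (Pt) (Lemma~\ref{lem:propPt}), $x$ is incident with exactly $q+1$ lines of $\Lt$, so these $q+1$ lines are precisely all lines of $\Lt$ through $x$.

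For (iii), every point of $\Pt_{\le 2}(x)$ lies on one of the $q+1$ lines of $\T$ through $x$, each of which is contained in $\pi$, so $\Pt_{\le 2}(x)\subseteq \pi$ and therefore $\langle \Pt_{\le 2}(x)\rangle\subseteq \pi$. Conversely, $\pi$ is $\Lt$-supported, so it equals the span of its $q+1$ lines of $\Lt$, each of which consists entirely of points collinear with $x$ in $\T$, i.e.\ points of $\Pt_{\le 2}(x)$. This yields $\pi\subseteq \langle \Pt_{\le 2}(x)\rangle$, and hence equality. Corollary~\ref{cor:trialityBasic}(\ref{cor:trialityBasic:noIsoPl}) then gives that $\pi=\langle \Pt_{\le 2}(x)\rangle$ contains no isolated points.

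There is no real obstacle here: the proof is a short bookkeeping argument assembling (Pt), (Pl), fullness, flatness, and the previously established fact that $\langle \Pt_{\le 2}(x)\rangle$ is isolated-point-free. The only subtle spot is recognising in step (ii) that the $q+1$ concurrent lines exhaust the lines of $\Lt$ through $x$, which is what lets us equate $\pi$ with $\langle \Pt_{\le 2}(x)\rangle$ in step (iii).
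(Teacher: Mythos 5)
Your proof is correct and follows essentially the same route as the paper: (Pl) plus the spanning hypothesis gives the $q+1$ lines, the ingredients behind (Pl) (fullness, no triangles, (Pt)) give concurrency at a point $x$, and Corollary~\ref{cor:trialityBasic} kills the isolated points. You merely spell out explicitly the identification $\pi=\langle \Pt_{\le 2}(x)\rangle$ that the paper's terse citation of Corollary~\ref{cor:trialityBasic} leaves implicit.
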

\begin{proof}
	It follows from (Pl) (Lemma \ref{lem:pl}) and the definition of \(\Lt\)-supported that \(\pi\) contains \(q+1\) lines of \(\Lt\).
	We know from \(\T\) being flat and how we obtained property (Pl) that the \(q+1\) lines of \(\Lt\) in \(\pi\) all meet in a point \(x\).
	Lemma \ref{cor:trialityBasic} shows that \(\pi\) does not contain any isolated points.
\end{proof}


\begin{lemma}\label{lem:3supported}
	\textbf{(Sd)} Let \(\Sigma\) be a \(3\)-dimensional \(\Lt\)-supported subspace of \(\PG\). 
	
	Then, one of the following is true:
	\begin{itemize}
		\item \(|\Lt(\Sigma)| = 2q+1\). 
		
		More specifically, there is a line \(L\) in \(\Lt(\Sigma)\) and two points \(x\) and \(y\) on \(L\) such that there are \(q\) lines in \(\Lt(\Sigma)\) intersecting \(L\) in \(x\) and \(q\) lines in \(\Lt(\Sigma)\) intersecting \(L\) in \(y\).
		
		Hence, there are exactly two ideal points in \(\Sigma\) and they are collinear in \(\T\).
		
		\item \(|\Lt(\Sigma)| = q+1\). 
		
		More specifically, any two lines \(L\) and \(M\) in \(\Lt(\Sigma)\) are opposite in \(\T\) and \(R(L,M) = \Lt(\Sigma)\). 
		
		It follows that there are no ideal points in \(\Sigma\).
		
	\end{itemize} 
	In either case there are no isolated points.
\end{lemma}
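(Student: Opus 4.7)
The plan is to split along the dichotomy: whether two \(\Lt\)-lines in \(\Sigma\) meet at a point, or all \(\Lt\)-lines in \(\Sigma\) are pairwise skew. The girth \(12\) of \(\T\) and the natural embedding of \(\T\) in \(\Q\) should pin down the two configurations.

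In the intersection case, take \(L, M \in \Lt(\Sigma)\) meeting at a point \(x\). By Lemma \ref{lem:Flat} the plane \(\pi_x = \langle L, M \rangle \subseteq \Sigma\) carries all \(q+1\) \(\Lt\)-lines through \(x\). Since \(\Sigma\) is \(\Lt\)-supported and \(3\)-dimensional, a further line \(N \in \Lt(\Sigma) \setminus \Lt(\pi_x)\) exists; it meets \(\pi_x\) at some \(y\) lying on an \(\Lt\)-line \(M' = xy\), so \(\pi_y = \langle N, M' \rangle \subseteq \Sigma\). These two planes share \(M'\) and account for \(2q+1\) \(\Lt\)-lines in total. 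To exclude an additional line \(N' \in \Lt(\Sigma) \setminus (\pi_x \cup \pi_y)\): if \(N'\) meets \(\pi_x\) and \(\pi_y\) at distinct points, the resulting \(8\)-cycle in the incidence graph contradicts girth \(12\); so they coincide at some \(a \in M' \setminus \{x, y\}\), forcing \(\pi_a \subseteq \Sigma\). I would then rule this out by picking mutually skew \(\Lt\)-lines \(L_x \subseteq \pi_x\), \(L_y \subseteq \pi_y\), \(L_a \subseteq \pi_a\) through \(x, y, a\) (all distinct from \(M'\)); they lie on a unique hyperbolic quadric \(R \subseteq \Sigma\), and each common transversal carries three absolute points, hence (Lemma \ref{lem:IntersectionLineQuadric}) lies in \(\Q\), giving \(R \subseteq \Q\). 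But \(\Sigma \cap \Q\) also contains the pair of planes \(\pi_x \cup \pi_y\) and is not all of \(\Sigma\) (otherwise Lemma \ref{lem:SolidsInQ} caps \(|\Lt(\Sigma)|\) at \(q+1 < 2q+1\)); so \(\Sigma \cap \Q\) must be the rank-\(2\) quadric \(\pi_x \cup \pi_y\), contradicting \(L_a \subseteq R \subseteq \Sigma \cap \Q\) since \(L_a\) meets \(\pi_x \cup \pi_y\) only at \(a\). Absence of isolated points follows because the absolute points of \(\pi_x\) are exactly those on \(\Lt\)-lines through \(x\) (and likewise for \(\pi_y\)), all inside \(\Sigma\).

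In the skew case, any \(L, M \in \Lt(\Sigma)\) are opposite in \(\T\) (if they were at \(\T\)-distance \(4\), an \(\Lt\)-transversal would have both endpoints in \(\Sigma\) and violate skewness), so the regulus \(R(L, M)\) of size \(q+1\) is defined. By Lemma \ref{lem:lineTangent}(1), points of \(M\) are at \(\T\)-distance \(5\) from \(L\) and thus lie outside \(L^\polarity\), so \(\Sigma = \langle L, M \rangle \not\subseteq \Q\); hence \(\Sigma \cap \Q\) is a proper quadric of \(\Sigma\) containing the two skew lines \(L, M\), so it is either a pair of planes or a hyperbolic \(\mathsf{Q}^+(3, q^3)\). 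The pair-of-planes case is impossible since each plane would then hold at most one \(\Lt\)-line (two in a common plane would meet), giving \(|\Lt(\Sigma)| \le 2\) and contradicting the target size \(q+1 \ge 3\) for \(q \ge 2\). So \(\Sigma \cap \Q\) is hyperbolic, \(\Lt(\Sigma)\) sits inside one of its two rulings, and I would identify these absolute ruling-lines with \(R(L, M)\); the no-isolated-points statement then follows from the same identification on \(\Sigma \cap \Q\).

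The main obstacle is the skew case, where the identification \(\Lt(\Sigma) = R(L, M)\) requires both \(R(L, M) \subseteq \Sigma\) (the ``we will later see'' statement flagged after Definition \ref{def:Regulus}) and \(\Lt(\Sigma) \subseteq R(L, M)\). Whereas Case A reduces cleanly to a girth-\(12\) 8-cycle obstruction plus the quadric-degeneracy argument above, the skew case requires pinning down which \(q+1\) of the \(q^3+1\) lines in one ruling of \(\Sigma \cap \Q\) are absolute. I expect to achieve this through the distance-\(3\)-regularity of \(\T\), the inclusion \(T(L, M) \subseteq \Sigma^\polarity\), and Lemma \ref{lem:LineOppositeRegulus}.
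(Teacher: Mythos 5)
Your intersecting-lines case is essentially correct and runs parallel to the paper's own argument: both hinge on the fact that a third plane of absolute lines through \(M'\) inside \(\Sigma\) would force too much of \(\Sigma\) into \(\Q\), against Lemma \ref{lem:SolidsInQ} (your \(8\)-cycle argument and rank-two identification of \(\Sigma\cap\Q\) are fine, and the detour through the auxiliary quadric \(R\) is unnecessary, since \(L_a\) is an absolute line and hence lies on \(\Q\) directly). The genuine gap is the skew case, which carries the real content of the lemma and which you defer rather than prove. Concretely: (i) your exclusion of the pair-of-planes case is circular, because it appeals to ``the target size \(q+1\)'', which is precisely the conclusion being proven; a priori an \(\Lt\)-supported solid could contain exactly two skew lines of \(\Lt\) (they already span \(\Sigma\)), one in each plane of \(\Sigma\cap\Q\), and nothing in your argument forbids this --- the only thing that forbids it is producing more lines of \(\Lt\) in \(\Sigma\), i.e.\ the inclusion \(R(L,M)\subseteq\Sigma\) that you postpone. (ii) The toolkit you propose for that inclusion (distance-\(3\)-regularity, \(T(L,M)\subseteq\Sigma^\polarity\), Lemma \ref{lem:LineOppositeRegulus}) yields the upper inclusion but not the lower one. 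Indeed, taking opposite points \(x,y\in T(L,M)\), Lemma \ref{lem:pointTangent} gives \(\Sigma=\langle L,M\rangle\subseteq x^\polarity\cap y^\polarity\), hence \(\Lt(\Sigma)\subseteq\Lt_{\le 3}(x)\cap\Lt_{\le 3}(y)=T(x,y)=R(L,M)\) by counting. But the reverse containment \(R(L,M)\subseteq\Sigma\) requires a different mechanism, and the paper gets it from flatness (Lemma \ref{lem:Flat}), which appears nowhere in your plan: choosing \(x,y,z\in T(L,M)\) spanning a plane \(\pi\) with \(x,y\notin\pi^\polarity\), flatness at \(x\) forces the point \(x_N\) of any \(N\in R(L,M)\) collinear with \(x\) into the plane \(\langle x,x_L,x_M\rangle\), and \(N\subseteq\pi^\polarity\) then pins \(x_N\) onto the line \(x_Lx_M\subseteq\Sigma\); the same at \(y\) gives \(N=x_Ny_N\subseteq\Sigma\).

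A second, smaller gap: in the skew case, ``no isolated points'' does not follow ``from the same identification on \(\Sigma\cap\Q\)''. Unlike the split Cayley hexagon in its quadric, the hexagon points \(\Pt\) form a proper subset of the points of \(\Q\), so knowing that \(\Sigma\cap\Q\) is a hyperbolic quadric with \(\Lt(\Sigma)=R(L,M)\) inside one ruling does not preclude a hexagon point of \(\Sigma\) lying off those \(q+1\) lines. The paper closes this with a separate pigeonhole argument: every point \(v\) of \(\Pt\) in \(\Sigma\) lies in \(w^\polarity\) for each of the \(q^3+1\) mutually opposite points \(w\in T(L,M)\), hence is at hexagon distance at most \(4\) from all of them; since only \(q+1\) lines of \(\T\) pass through \(v\), two of these shortest paths start with the same line through \(v\), and that line, being at distance \(3\) from two opposite trace points \(w_j,w_k\), lies in \(T(w_j,w_k)=R(L,M)\), so \(v\) is on a line of \(\Lt(\Sigma)\). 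You would need to incorporate both this and the flatness argument above to complete your proposal.
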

\begin{proof}
	We consider two cases:  
	\begin{enumerate}
		\item \(\Sigma\) contains an \(\Lt\)-supported plane \(\pi\) and a line \(L\) of \(\Lt\) intersecting \(\pi\) in a point.
		\item \(\Sigma\) contains no \(\Lt\)-supported plane \(\pi\), but contains at least 2 disjoint lines, say \(L\) and \(M\) of \(\Lt\).
	\end{enumerate}
	\paragraph{Case 1}
	By Lemma \ref{lem:2supported}, the plane \(\pi\) contains \(q+1\) lines of \(\Lt\) which all meet in a point \(x\) and \(\pi\) contains no isolated points. Let \(y\) be the intersection of \(\pi\) and \(L\). Since all lines of \(\Lt\) through \(x\) are contained in \(\pi\) we know that \(x\neq y\). And because there are no isolated points we know that \(xy\in\Lt\).
	With two intersecting lines, the plane through \(xy\) and \(L\) must also be an \(\Lt\)-supported plane and contain \(q+1\) lines of \(\Lt\).
	If \(\Sigma\) would contain any more lines, they would also need to intersect \(xy\) since neither of the two \(\Lt\)-supported planes can have isolated points and we also cannot form a triangle.
	This would however give us three planes through \(xy\) of \(\Q\) in \(\Sigma\). This is only possible if \(\Sigma\subseteq \Q\) but that would contradict Lemma \ref{lem:SolidsInQ}. 
	
	The solid \(\Sigma\) is contained in the \(5\)-space \((xy)^\polarity\) and contains \(xy\). Hence, by Lemma \ref{lem:lineTangent}(\ref{lem:lineTangent:noIsoSub}), \(\Sigma\) contains no isolated points.
	
	\paragraph{Case 2}
	We consider three points \(x,y\) and \(z\) in the distance-\(3\)-trace \(T(L,M)\) of \(L\) and \(M\). If these three points were collinear in \(\PG\), then the line through them would be contained in \(\Q\) (Lemma \ref{lem:IntersectionLineQuadric}), which contradicts \(\delta(x,y)=\delta(y,z)=\delta(x,z) = 6\) (see Corollary \ref{cor:TraceOpposite} and Corollary \ref{cor:oppositeIffopposite}). Therefore, we know that these three points span a plane \(\pi\). 
	Since \(T(L,M) \subseteq L^\polarity \cap M^\polarity\) (see Lemma \ref{lem:pointTangent}(\ref{lem:pointTangent:points})), we have that \(\pi \subseteq L^\polarity \cap M^\polarity\). This means that \(L\) and \(M\) are contained in the \(4\)-dimensional space \(\pi^\polarity\). 
	Note that \(\pi^\polarity = x^\polarity\cap y^\polarity\cap z^\polarity\). 
	Hence, all lines of \(\T\) in this 4-dimensional subspace are in \( \Lt_{\le 3}(x)\cap \Lt_{\le 3}y) \cap \Lt_{\le 3}(z)\). Since there are only \(q+1\) lines in \(\Lt_{\le 3}(x)\cap \Lt_{\le 3}(y) = T(x,y)\), the subspace \(\pi^\polarity\) has at most \(q+1\) lines of \(\T\).
	
	We claim that at most one of the points \(x,y\) and \(z\) is contained in \(\pi\cap\pi^\polarity\). Assume to the contrary that both \(y\) and \(z\) were contained in \(\pi\cap\pi^\polarity\). The line \(yz\) would then be absolute since \(yz \subseteq \pi \implies \pi^\polarity \subseteq (yz)^\polarity\). This contradicts \(\delta(y,z) = 6\) and Corollary \ref{cor:oppositeIffopposite}.
	We may therefore assume without loss of generality that \(x\) and \(y\) are not in \(\pi^\theta\). 
	
	The point \(x\) is collinear with a point \(x_L\) on \(L\) and a point \(x_M\) on \(M\). We define \(\pi_x\) to be the plane spanned by \(x,x_L\) and \(x_M\). Similarly, we define \(\pi_y\) as the plane spanned by \(y, y_L = L\cap\Points_2(y)\) and \(y_M = M \cap \Points_2(y)\).
	
	Note that the lines \(x_Lx_M\) and \(y_Ly_M\) are precisely the intersections \(\pi_x \cap \pi^\polarity\) and \(\pi_y \cap \pi^\polarity\) respectively.
	
	Now let \(N\) be a line in \(R(L,M)\) different from \(L\) and \(M\). We define \(x_N\) and \(y_N\) to be the unique points on \(N\) collinear with respectively \(x\) and \(y\). Since the embedding is flat (Lemma \ref{lem:Flat}) in both \(x\) and \(y\), we know that \(x_N\in\pi_x\) and \(y_N\in\pi_y\).
	By the same argument as before, we also have that \(N\in \pi^\polarity\). This allows us to conclude that \(x_N\) is on the line \(x_Lx_M\) and similarly \(y_N\) is on the line \(y_Ly_M\).
	This shows that \(N\in \Sigma\). Since \(N\) was arbitrary, \(R(L,M)\subseteq \Sigma\).
	
	Finally, let \(v\) be an arbitrary point of \(\T\) in \(\Sigma\). For each point \(w_i\) (with \(i \in \{1,\dots,q^3+1\}\)) of \(T(L,M)\) we have that \(v \in \Pt_{\le 4}(w_i)\). 
	Since there are only \(q+1\) lines through \(v\), at least two of the shortest paths, say the path from \(v\) to \(w_j\) and the path from \(v\) to \(w_k\) (with \(j,k\in \{1,\dots, q^3+1\} \) and \(j\neq k\)), must use the same line though \(v\). 
	This shows that \(v\) is on \(R(L,M)\) and there are no isolated points.
\end{proof}

\begin{lemma}\label{lem:ThreeLinesIntersectingACommonLine}
	Let \(L_1,L_2 \) and \(L_3\) be three lines of \(\Lt\) intersecting a common line \(M\). If the intersection points of \(L_i, i= 1, 2, 3\) with \(M\) are different, then \(\langle L_1,L_2,L_3 \rangle\) is a \(4\)-dimensional subspace of \(\PG\) containing exactly \(q^2+q\) lines of \(\Lt\) intersecting \(M\). 
\end{lemma}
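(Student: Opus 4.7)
My plan is to first establish $\dim\langle L_1, L_2, L_3 \rangle = 4$, by ruling out that the span is a plane or a solid, and then to count the lines of $\Lt$ meeting $M$ inside this $4$-space. I begin by showing that $L_1, L_2, L_3$ are pairwise skew in $\PG$. Suppose for contradiction that $L_1 \cap L_2 = \{y\}$. By fullness (Remark~\ref{rem:Full}), $y \in \Pt$, and since $x_1 \ne x_2$ we have $y \notin M$, so the plane $\pi := \langle L_1, L_2 \rangle$ contains $M$. By property~(Pl) and flatness (Lemma~\ref{lem:Flat}), the $q+1$ lines of $\Lt$ in $\pi$ are exactly the lines of $\Lt$ through $y$. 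I then split on whether $L_3 \subset \pi$: if so, all three lines concur at $y$; if not, the solid $\Sigma := \langle \pi, L_3 \rangle$ is $\Lt$-supported with three non-concurrent lines of $\Lt$, so Lemma~\ref{lem:3supported} forces Case~1, i.e.\ a bow-tie through two ideal points $a, b$ with $ab \in \Lt$. In either situation, a careful analysis of the positions of $x_1, x_2, x_3$ on $M$ using flatness at $y$ (or at $a$ and $b$) contradicts the distinct-intersection hypothesis.

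With pairwise skewness in hand, $\Sigma_{12} := \langle L_1, L_2 \rangle$ is a solid containing $M$. If further $L_3 \subset \Sigma_{12}$, then $\Sigma_{12}$ would contain three pairwise skew lines of $\Lt$, so by Lemma~\ref{lem:3supported} we would be in Case~2 and $\Lt(\Sigma_{12}) = R(L_1, L_2)$. Then $M$ would meet three mutually opposite lines of this regulus at distinct points, which I would rule out by combining Lemma~\ref{lem:LineOppositeRegulus} with the bijection (noted in its proof) between the $q+1$ lines of $\Lt$ through any point of $T(L_1, L_2)$ and the lines of $R$. Hence $L_3 \not\subset \Sigma_{12}$ and $V := \langle L_1, L_2, L_3 \rangle$ is $4$-dimensional.

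For the count, note that $x_1, x_2, x_3$ are three collinear points of $\Q$, so Lemma~\ref{lem:IntersectionLineQuadric} gives $M \subset \Q$. At each $\Pt$-point $p$ of $M$, property~(Pt) and flatness give $q+1$ lines of $\Lt$ collected in a single plane $\pi_p$. Using Lemma~\ref{lem:lineTangent} applied to one of the $L_i$ together with a dimension count inside $V$, I would show that $M$ has exactly $q+1$ $\Pt$-points lying in $V$, and that at each of these the number of lines of $\Lt$ in $V$ transverse to $M$ is $q$; this gives a total of $(q+1)q = q^2+q$ lines of $\Lt$ in $V$ meeting $M$. The main obstacles are the two structural exclusion arguments in the first two steps---the bow-tie of Case~1 and the regulus of Case~2---both of which require careful simultaneous use of (Pl), (Sd), flatness, and the regulus and distance-$3$-trace structure of $\TT$; once these are dispatched, the count follows from the structural information gathered along the way.
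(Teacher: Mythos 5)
There is a genuine gap, and it sits exactly where the real content of the lemma lies: the counting step. Your plan reduces the count to the claim that exactly \(q+1\) points \(p\) of \(M\) have their pencil of \(\Lt\)-lines (the plane \(\pi_p\)) contained in \(V:=\langle L_1,L_2,L_3\rangle\), and you propose to obtain this from Lemma \ref{lem:lineTangent} ``together with a dimension count inside \(V\)''. Neither tool can deliver it. Lemma \ref{lem:lineTangent} only says that every line of \(\Lt\) meeting \(M\) lies in the \(5\)-space \(M^\polarity\); it says nothing about how many of the \(q^3+1\) planes \(\pi_p\), \(p\in M\), fall inside a given \(4\)-dimensional subspace of \(M^\polarity\). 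In the quotient \(M^\polarity/M\cong\mathsf{PG}(3,q^3)\) these planes become a set of \(q^3+1\) points and \(V/M\) is a plane; the assertion to prove is that the plane spanned by any three of these points contains exactly \(q+1\) of them, i.e.\ that the contributing points of \(M\) form a subline over \(\F_q\). This is precisely where the subfield \(\F_q\) fixed by \(\sigma\) enters, and no incidence-theoretic lemma available at this stage of the paper detects that subfield; note that both the upper bound \(\le q+1\) and the lower bound \(\ge q+1\) are at stake, and Corollary \ref{cor:3thenqp1subpoints}, which expresses the lower bound, is itself deduced from this lemma, so nothing of that kind may be invoked. The paper's proof is unavoidably computational here: by transitivity of \(\mathrm{Aut}(\T)\) on ordered hexagons it normalises \(M=e_0e_6\), \(L_1=e_0e_5\), \(L_2=e_6e_1\), parameterises the points of \(M\) by \(a\in\F_{q^3}\), exhibits the point \(z(a)=(0,a^\sigma a^{\sigma^2},0,a^{\sigma^2},0,1,0,-a^\sigma)\) in the plane \(\langle M,L_3\rangle\), and shows that \(z(b)\in\langle e_0,e_1,e_6,e_5,z(a)\rangle\) if and only if \(b=\lambda a\) with \(\lambda^\sigma=\lambda\), i.e.\ \(\lambda\in\F_q\). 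That semilinear condition is the whole lemma; your outline has no substitute for it.

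There is also an ambiguity you never resolve: whether \(M\in\Lt\). The paper's proof (and every later application of the lemma) takes \(M\in\Lt\); for an arbitrary line \(M\) the statement is false. For instance, the \(q+1\) lines of \(\Lt\) through a point \(y\) lie in a plane by flatness (Lemma \ref{lem:Flat}); letting \(M\) be a line of that plane missing \(y\), any three of the lines through \(y\) meet \(M\) in distinct points yet span only a plane. (Similarly, the line \(N\) of \(\Q\) in Case 1 of Lemma \ref{lem:4supported} is met by \((q+1)^2\), not \(q^2+q\), lines of \(\Lt\) of that \(4\)-space.) Once \(M\in\Lt\) is assumed, your first two steps collapse to trivialities, and the elaborate case analyses you sketch for them are unnecessary: pairwise skewness of \(L_1,L_2,L_3\) is immediate because a common point would create a triangle of \(\T\), and \(L_3\not\subset\langle L_1,L_2\rangle\) follows because both cases of Lemma \ref{lem:3supported} are incompatible with \(M,L_1,L_2,L_3\) lying in one solid: Case 2 because \(M\) meets \(L_1\), and Case 1 because \(x_1,x_2,x_3\) would be three distinct points each lying on two lines of the solid, while Case 1 admits only two such points. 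Without the assumption \(M\in\Lt\), your step 1 is false, so the ``careful analysis'' you defer there cannot exist. Either way, the first two steps are not where the work is; the lemma stands or falls with the subfield computation that your proposal omits.
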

\begin{proof}
	We let \(e_i\) be the point with coordinates all 0 except \(x_i\) which we can choose to be \(1\).
	From Example \ref{ex:AbsoluteHexagon} we know that the points \(e_0, e_6, e_1, e_4, e_2\) and \( e_5\) form a hexagon of \(\T\). 
	Since \(\text{Aut}(\T)\) acts transitively on the set of ordered hexagons (see \cite[Chapter 4 \& 5]{VanMaldeghem1998}), 
	we may assume without loss of generality that \(e_0, e_6 \in M\), that \(e_0,e_5 \in L_1\) and that \(e_6, e_1 \in L_2\).
	
	We now consider a different line \(L_4\) of \(\Lt\) which intersects \(M\).
	
	Let \(y = (1,0,0,0,0,0,a,0)\) be the intersection of \(M\) and \(L_3\) and let \(y' = (1,0,0,0,0,0,b,0)\) be the intersection of \(M\) and \(L_4\).
	
	\begin{figure}[h]
		\begin{center}
			\begin{tikzpicture}[outer sep=3, inner sep=3]
				\foreach \P in {
					(0,0) coordinate (e0) node[anchor=east]{\((1,0,0,0,0,0,0,0) = e_0\)}, 
					(0,-1) coordinate (e6) node[anchor=east]{\((0,0,0,0,0,0,1,0) = e_6\)},  
					(0,-2) coordinate (y) node[anchor=east]{\((1,0,0,0,0,0,a,0) = y\)}, 
					(0,-3.2) coordinate (yp) node[anchor=east]{\((1,0,0,0,0,0,b,0) = y'\)},
					(2,1) coordinate (e5) node[anchor=west]{\(e_5 = (0,0,0,0,0,1,0,0)\)}, 
					(2,-0.5) coordinate (e1) node[anchor=west]{\(e_1 = (0,1,0,0,0,0,0,0)\)}, 
				}	
				\filldraw \P circle (3pt);
				
				\draw[thick] 
				(e0) -- (0,-4) node[anchor=west]{\(M\)}
				(e0) -- node[anchor=south]{\(L_1\)} (e5)
				(e6) -- node[anchor=south]{\(L_2\)} (e1)
				(y) -- node[anchor=south]{\(L_3\)} (2,-2)
				(yp) -- node[anchor=south]{\(L_4\)} (2,-3.5)
				;
				
			\end{tikzpicture}
		\end{center}	
		\caption{Coordinates}
	\end{figure}
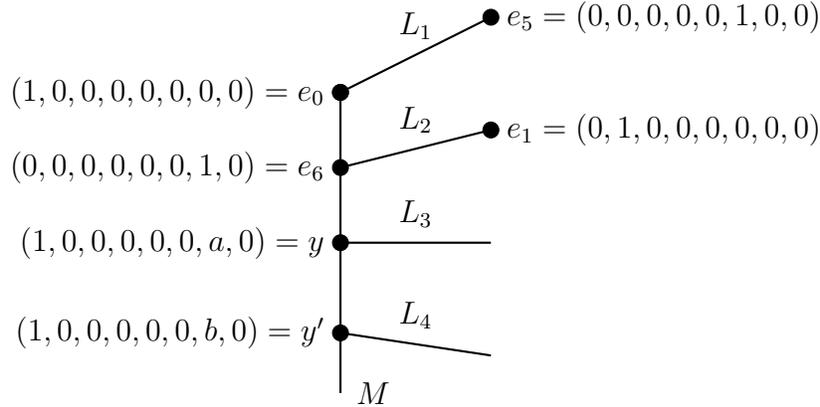
	
	We claim that \(L_4 \subseteq \langle L_1,L_2,L_3 \rangle\) if and only if there exists a \(\lambda\) in \(\F_q\) such that \(b = \lambda a\).
	
	As in Example \ref{ex:PlaneOfAbsoluteLines} we can derive the equations for the plane \(y^\triality\cap y^{\triality^2} = \langle M, L_3 \rangle \):
	
	\[\begin{cases}
		0 = x_2 \\
		0 = x_4 \\
		0 = x_1 - a^\sigma x_3 \\
		0 = x_1 + a^{\sigma^2} x_7 \\
		0 = x_7 + a^\sigma x_5 \\
		0 = x_3 - a^{\sigma^2} x_5. \\
	\end{cases}
	\]
	This plane \(\langle M, L_3\rangle\) contains the point \((0,a^\sigma a^{\sigma^2},0,a^{\sigma^2},0,1,0, -a^{\sigma}) =: z\). Similarly, we find the point \((0,b^\sigma b^{\sigma^2},0,b^{\sigma^2},0,1,0, -b^{\sigma}) =: z'\) in \(\langle M, L_4 \rangle\).
	
	The point \(z'\) is contained in \(\langle e_0, e_1, e_6, e_5, z\rangle\) if and only if there exists a \(\lambda \in \F_{q^3}\) such that:
	
	\[\begin{cases}
		\lambda a^\sigma = b^\sigma \\
		\lambda a^{\sigma^2} = b^{\sigma^2} \\
	\end{cases}
	\]
	
	or equivalently
	\[\begin{cases}
		\lambda a = b \\
		\lambda^\sigma = \lambda. \\
	\end{cases}
	\]
	
	Since \(\sigma\) fixes precisely \(\F_q\) in \(\F_{q^3}\), this proves that \(L_4 \subseteq \langle L_1,L_2,L_3 \rangle\) if and only if there exists a \(\lambda\) in \(\F_q\) such that \(b = \lambda a\). 
	
	In total this gives us exactly \(q+1\) planes through \(M\) each containing \(q\) lines of \(\Lt\) different from \(M\) which are contained in \(\langle L_1,L_2,L_3 \rangle\).
	
	Since solids contain at most \(2q+1\) lines of \(\Lt\) (Lemma \ref{lem:3supported}) the construction implies that \(\langle L_1,L_2,L_3 \rangle\) has dimension \(4\).
\end{proof}

Using the concept of ideal points of a subspace (Definition \ref{def:IsolatedAndIdeal}), we can derive the following simplification of Lemma \ref{lem:ThreeLinesIntersectingACommonLine}:
\begin{corollary}\label{cor:3thenqp1subpoints}
	Let \(U\) be a subspace of \(\PG\). Let \(L\) be a line of \(\Lt\) in \(U\).
	If \(L\) contains \(3\) ideal points of \(U\), then \(L\) contains at least \(q+1\) ideal points of \(U\).
\end{corollary}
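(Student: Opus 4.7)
The plan is to reduce the corollary directly to Lemma \ref{lem:ThreeLinesIntersectingACommonLine}. Suppose \(x_1, x_2, x_3\) are three ideal points of \(U\) lying on \(L\). By (Pt), each \(x_i\) is on exactly \(q+1\) lines of \(\Lt\), and since \(x_i\) is ideal in \(U\), all of these lines are contained in \(U\); in particular, for each \(i\) I can choose a line \(L_i \in \Lt\) through \(x_i\) with \(L_i \neq L\) and \(L_i \subseteq U\). Since \(x_1,x_2,x_3\) are distinct points of \(L\), the hypothesis of Lemma \ref{lem:ThreeLinesIntersectingACommonLine} is met, and I obtain a \(4\)-space \(W := \langle L_1, L_2, L_3 \rangle \subseteq U\) containing exactly \(q^2+q\) lines of \(\Lt\) distinct from \(L\) that meet \(L\).

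Next, I distribute these \(q^2+q\) lines among the points of \(L\). Let \(y\) be any point on \(L\) through which there is at least one line \(K\) of \(\Lt\) in \(W\) with \(K \neq L\). By flatness (Lemma \ref{lem:Flat}), all \(q+1\) lines of \(\Lt\) through \(y\) lie in a common plane \(\pi_y\) of \(\PG\); this plane contains both \(L\) and \(K\), so \(\pi_y = \langle L, K \rangle \subseteq W\). Hence every line of \(\Lt\) through \(y\) lies in \(W\), contributing exactly \(q\) lines of \(\Lt\) distinct from \(L\) through \(y\). Dividing the total \(q^2+q\) by \(q\), there are exactly \(q+1\) such points \(y\) on \(L\), and of course \(x_1,x_2,x_3\) are among them.

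Finally, at each of these \(q+1\) points \(y\), all \(q+1\) lines of \(\Lt\) through \(y\) are contained in \(W \subseteq U\), so \(y\) is ideal in \(U\). This gives the required \(q+1\) ideal points on \(L\).

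The step I view as the crux is the flatness argument in the second paragraph: it is what forces the \(q^2+q\) extra lines in \(W\) to cluster as \(q\) lines at each of \(q+1\) ideal points on \(L\), rather than being spread more thinly across many points (in which case the count of \textit{ideal} points produced would be too small). Everything else is a direct application of the hypotheses (Pt), Lemma \ref{lem:Flat}, and Lemma \ref{lem:ThreeLinesIntersectingACommonLine}.
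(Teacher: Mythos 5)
Your proof is correct and takes essentially the same route as the paper: the paper's proof of this corollary is literally ``Follows from Lemma \ref{lem:ThreeLinesIntersectingACommonLine}'', and your argument supplies precisely the details of that reduction (choosing \(L_i\neq L\) through each ideal point, noting \(\langle L_1,L_2,L_3\rangle\subseteq U\), and using flatness to cluster the \(q^2+q\) lines meeting \(L\) into groups of exactly \(q\) at \(q+1\) points, each of which is then ideal in \(U\)). The clustering you derive from flatness is exactly the structure implicit in the paper's proof of that lemma, where these lines arise in \(q+1\) planes through the common line, so nothing is missing.
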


\begin{lemma}\label{lem:4supported}
	\textbf{(4d)} Let \(U\) be a \(4\)-dimensional \(\Lt\)-supported subspace of \(\PG\). 
	
	Then, one of the following is true:
	\begin{itemize}
		\item \(|\Lt(U)| = q^2+q+1\).
		In this case there are \(q+1\) ideal points in \(U\). They are all collinear in \(\T\).	We can describe this situation as follows:
		there is a line \(L\in\Lt(U)\) and \(q+1\) planes through \(L\) in \(U\) each of which contain \(q\) lines of \(\Lt(U)\) different from \(L\).
		
		\item \(|\Lt(U)| = q^2+2q+1\).
		In this case there are \(q+2\) ideal points in \(U\). All but one of these are on a line of \(\Q\) which is not a line of \(\T\).
		We can describe this situation as follows:
		there are \(q+1\) planes in \(U\) each of which contain \(q+1\) lines of \(\Lt(U)\) and all these planes intersect in a point \(x\) of \(\Pt\). 
		Each line through \(x\) contains two ideal points, one of which is \(x\).
	\end{itemize}
	In either case, there are no isolated points.
\end{lemma}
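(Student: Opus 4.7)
The plan is to apply Lemma~\ref{lem:SupSubspace} to write $U = \langle V, L \rangle$ with $V$ an $\Lt$-supported subspace of dimension $2$ or $3$ and $L \in \Lt$, and then analyse the resulting configuration.

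First I would show that $U$ always contains an ideal point $x$. If $\dim V = 2$, or if $\dim V = 3$ with $|\Lt(V)| = 2q+1$, the ideal points of $V$ (from Lemma~\ref{lem:2supported} or Lemma~\ref{lem:3supported}) remain ideal in the larger space $U$. If $V$ is a regulus solid, then $p := L \cap V$ lies in $\Pt$ (since $p \in L \in \Lt$); the argument in the proof of Lemma~\ref{lem:3supported} Case~2 shows every point of $V \cap \Pt$ is on some line of $R(L_1, L_2)$, so $p$ lies on some $L' \in R(L_1, L_2)$ distinct from $L$, and the plane $\langle L, L' \rangle \subseteq U$ is $\Lt$-supported, making $p$ ideal by Lemma~\ref{lem:2supported}.

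Given an ideal point $x \in U$, the plane $\pi_x = x^\triality \cap x^{\triality^2}$ containing the $q+1$ lines of $\Lt$ through $x$ is contained in both $U$ and $\Q$ (see Remark~\ref{rem:absoluteLinesInAPlane}). By Corollary~\ref{cor:3thenqp1subpoints}, each of the $q+1$ lines $M_0, \dots, M_q \in \Lt$ through $x$ carries $1$, $2$, or at least $q+1$ ideal points of $U$. In Case~1, some line $M = M_i$ carries $q+1$ ideal points: Lemma~\ref{lem:ThreeLinesIntersectingACommonLine} applied to three lines of $\Lt$ through three distinct ideal points on $M$ gives exactly $q^2 + q + 1$ lines of $\Lt$ meeting $M$ in $U$, arranged in $q+1$ planes through $M$. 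Any hypothetical $N \in \Lt(U)$ skew to $M$ is ruled out by examining the solid $\langle N, M \rangle$ via Lemma~\ref{lem:3supported}: in either of its two possibilities an ideal point of $U$ is forced off $M$, contradicting the case assumption.

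In Case~2 every $M_i$ carries at most two ideal points, and I would argue each carries exactly two by applying Lemma~\ref{lem:SupSubspace} again to auxiliary $2q+1$-type solids inside $U$. This produces $q+1$ secondary ideal points $y_0, \dots, y_q \in \pi_x$ besides $x$. Each pair $y_i, y_j$ lies at distance $4$ in $\T$ with $x$ as their unique common neighbour (any collinearity would create a triangle), so by Corollary~\ref{cor:oppositeIffopposite} they are not opposite in $\Q$, and by Lemma~\ref{lem:IntersectionLineQuadric} the line $y_i y_j$ is a line of $\Q$. A refined argument inside the $\Q$-plane $\pi_x$ will show the $y_i$ all lie on a single line $N$ of $\Q$, which is not in $\Lt$ since the $y_i$ are pairwise non-collinear in $\T$. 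Counting through the $q+2$ ideal points gives $q+1 + q(q+1) = q^2 + 2q + 1$ lines of $\Lt(U)$. In both cases the $\Lt$-supported planes $\pi_y$ through each ideal point $y$ cover every point of $U$ on a line of $\Lt$, so Lemma~\ref{lem:2supported} yields that $U$ contains no isolated points. The main obstacle will be the rigidity steps in both cases: in Case~1 ruling out lines of $\Lt(U)$ skew to $M$, and in Case~2 producing a second ideal point on every $M_i$ and forcing their collinearity on a single line of $\Q$ in $\pi_x$.
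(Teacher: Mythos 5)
Your setup is sound and matches the paper's skeleton: an ideal point \(x\) of \(U\) exists (your three-way argument for this is correct), the case split via Corollary \ref{cor:3thenqp1subpoints} is the right dichotomy, and in Case~1 Lemma \ref{lem:ThreeLinesIntersectingACommonLine} does give that the lines meeting \(M\) number exactly \(q^2+q\) and span \(U\). But the two steps you yourself call ``the main obstacle'' are precisely where the paper's proof does its real work, and what you propose for them does not go through. In Case~1 the contradiction you aim for is not a contradiction: your case hypothesis is only that \emph{some} line \(M\) through \(x\) carries \(q+1\) ideal points of \(U\), and an ideal point of \(U\) lying off \(M\) is perfectly compatible with that hypothesis, so ``forcing an ideal point off \(M\)'' refutes nothing. (Moreover, in the sub-case where \(\langle N,M\rangle\) is a regulus solid, Lemma \ref{lem:3supported} says that solid has \emph{no} ideal points, so even the forcing itself needs an argument you do not supply.) The step can be closed, but by a different mechanism, which is in effect what the paper uses: three ideal points on \(M\) give three lines of \(\Lt(U)\) meeting \(M\) at distinct points; these lines lie in \(M^\polarity\) (Lemma \ref{lem:lineTangent}) and span \(U\) (Lemma \ref{lem:ThreeLinesIntersectingACommonLine}), so \(U\subseteq M^\polarity\), and then Lemma \ref{lem:lineTangent}(\ref{lem:lineTangent:lines}) forces \(\Lt(U)\subseteq \Lt_{\le 2}(M)\); a line skew to \(M\) is excluded outright, with no solid-by-solid analysis.

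In Case~2 the gap is larger. Producing a second ideal point on \emph{every} line \(M_i\) through \(x\) is the heart of the lemma, and ``applying Lemma \ref{lem:SupSubspace} again to auxiliary \(2q+1\)-type solids'' is not a construction: you never exhibit, for a given \(M_i\), a line of \(\Lt(U)\) at distance \(4\) from \(M_i\) with which to form such a solid. The paper obtains these points from a line \(L\in\Lt(U)\) skew to the plane of lines through the centre, via the point \(z=x\bowtie y\), the containment \(U\subseteq z^\polarity\), and the reguli \(R(L,M)\subseteq\langle L,M\rangle\); flatness at \(z\) then forces the new ideal points onto a single line \(N\) of \(\Q\), which simultaneously yields the collinearity statement you defer. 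Two further claims of yours also presuppose what must be proved: the count \(q+1+q(q+1)=q^2+2q+1\) assumes every line of \(\Lt(U)\) passes through one of the \(q+2\) ideal points, and your closing assertion that the planes through the ideal points ``cover every point of \(U\) on a line of \(\Lt\)'' is exactly the no-isolated-points statement. The paper proves both using the polarity: \(\Lt(U)\subseteq\Lt_{\le 3}(z)\) together with the fact that every line of \(U\) meets the solid \(\langle L,M\rangle\), which has no isolated points, and then Lemma \ref{lem:pointTangent}(\ref{lem:pointTangent:noIsoSub}), resp.\ Lemma \ref{lem:lineTangent}(\ref{lem:lineTangent:noIsoSub}). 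The recurring missing idea in your proposal is the use of \(\polarity\): without containments of the form \(U\subseteq M^\polarity\) or \(U\subseteq z^\polarity\), none of the three completeness statements (no skew lines in Case~1, no extra lines in Case~2, no isolated points in either) is within reach.
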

\begin{proof}
	We write \(\Lt_U\) as an abbreviation of the set of lines \(\Lt(U)\).
	By Lemma \ref{lem:SupSubspace}, \(U\) contains either an \(\Lt\)-supported plane or an \(\Lt\)-supported solid. 
	
	Assume first that there is no \(\Lt\)-supported plane in \(U\). This would mean that none of the lines in \(\Lt_U\) intersect. We also know there exists an \(\Lt\)-supported solid in \(U\). By Lemma \ref{lem:3supported}, this solid contains \(q+1\) lines and no isolated points. Hence \(\Lt_U\) can have no further lines without having intersecting lines, which would contradict the assumption that \(U\) is \(\Lt\)-supported.
	
	We may therefore assume that \(\Lt_U\) contains an \(\Lt\)-supported plane \(\pi\). We define \(x\) to be the intersection point of all the lines of \(\Lt\) in \(\pi\).
	Since \(U\) is \(\Lt\)-supported, there exists a line \(L\) of \(\Lt_U\) not in \(\pi\).
	
	We investigate two different cases:
	\begin{enumerate}
		\item \(L\) does not intersect \(\pi\).
		\item \(L\) intersects \(\pi\).
	\end{enumerate}
	
	\paragraph{Case 1}
	
	Since all lines of \(\Lt\) through \(x\) are contained in \(\pi\), all lines at distance 3 from \(x\) contain a point of \(\pi\). Hence, because \(L\) does not intersect \(\pi\), we observe that \(\delta(x,L) = 5\).

	By Corollary \ref{cor:UniqueClosest}, \(L\) must contain a point \(y\) at distance \(4\) from \(x\). Let \(z:=x\bowtie y\) be the unique point collinear with \(x\) and \(y\).
	All points on \(L\) and in \(\Pt_{\le 2}(x)\) are contained in \(\Pt_{\le 4}(z)\) and therefore in \(z^\polarity\) (Lemma \ref{lem:pointTangent}(\ref{lem:pointTangent:points})). Since \(L\) and \(\pi\) span \(U\), this means that \(U\subseteq z^\polarity\). 
	Therefore, by Lemma \ref{lem:pointTangent}(\ref{lem:pointTangent:lines}), it follows that \(\Lt_U \subseteq \Lt_{\le 3}(z)\). 
	
	We now show that each line through \(z\) contains a second ideal point (different from \(z\)).
	Let \(M\) be a line through \(x\) different from \(xz\). This means that \(L\) and \(M\) are opposite in \(\T\). Let \(\Sigma\) be the solid spanned by \(L\) and \(M\). By Lemma \ref{lem:3supported}, the solid \(\Sigma\) contains the regulus \(R(L,M)\). Since \(z\in T(L,M)\), the point \(z\) is collinear in \(\T\) with a point on every line in \(R(L,M)\). With at least \(2\) lines of \(\Lt(U)\) through each of these points, they must be ideal in \(U\).
	Since the embedding is flat and \(z\not\in \Sigma \), we know that these points collinear with \(z\) and on \(R(L,M)\) are collinear (in \(\PG\)). 
	Let \(N\) be the line in $\PG$ through these \(q+1\) ideal points, which is clearly not a line of \(\T\). Since it contains at least \(q+1\) points of \(\Pt\) (and therefore of \(\Q\)), it must be a line of \(\Q\) (Lemma \ref{lem:IntersectionLineQuadric}).
	
	\begin{figure}[h]
		\begin{center}
			\resizebox{0.15\textwidth}{!}{
				\begin{tikzpicture}[outer sep=3, inner sep=3]
					\foreach \P in {
						(0,0) coordinate (p1) node[anchor=east]{\(z\)},
						(1,-2) coordinate (p2) node[anchor=north]{\(y\)}, 
						(1,2) coordinate (x) node[anchor=south]{\(x\)}, 
					}	
					\filldraw \P circle (3pt);
					
					\draw[thick]
					(p1) -- (x)
					(x) --node[anchor=south]{\(M\)} (3,2) 
					(x) --node[anchor=north]{} (3,1) 
					(p1) -- (p2)
					(p2) --node[anchor=south]{\(L\)} (3,-2);
					\draw[thick, dashed]
					(p2) --node[anchor=west]{\(N\)} (x);
				\end{tikzpicture}
			}
		\end{center}	
		\vspace{-1em}
		\caption{Case 1}
	\end{figure}
	
	By counting the lines through the ideal points in \(U\) on \(N\), we get \((q+1)^2=q^2+2q+1\) lines in \(\Lt_U\). Observe that this also counts all the lines through the ideal point \(z\) precisely once. 
	
	Since every line of \(\Lt\) in \(U\) intersects \(\Sigma\) and \(\Lt_U \subseteq \Lt_{\le 3}(z)\), we can use the fact there are no isolated points in \(\Sigma\) (see Lemma \ref{lem:3supported}) to conclude that there are no other lines in \(\Lt_U\).
	This also means there are no ideal points different from the \(q+2\) we already found.
	
	We know that \(U\) is a subspace of \(z^\polarity\) which contains \(\Pt_{\le 2}(z)\). By Lemma \ref{lem:pointTangent}(\ref{lem:pointTangent:noIsoSub}), \(U\) has no isolated points. 
	
	\paragraph{Case 2}
	If \(U\) contains any line \(L'\) which does not intersect \(\pi\), we can replace \(L\) by \(L'\) and end up in case 1. We may therefore assume all lines of \(\Lt\) in \(U\) intersect \(\pi\). 
	By Lemma \ref{lem:2supported}, the line \(L\) intersects one of the lines of \(\Lt\) through \(x\) in a point \(y\neq x\).
	This means that the points \(x\) and \(y\) are two ideal points on \(xy\). 
	The subspace \(U\) therefore must contain at least the \(2q+1\) lines through these two ideal points.
	Since \(U\) is \(\Lt\)-supported and all these \(2q+1\) lines are contained in a solid, there must still be another line in \(\Lt(U)\). That is, there exist a line \(M\) in \(\Lt(U)\) which goes through neither \(x\) nor \(y\).
	If \(M\) does not intersect \(xy\), we consider \(\langle L,xy\rangle \) and \(M\) to end up in case 1.
	If \(M\) does intersect \(xy\), then the line \(xy\) has at least three ideal points of \(U\).
	By Lemma \ref{lem:ThreeLinesIntersectingACommonLine}, the 4-space spanned by \(\pi\), \(L\) and \(M\), contains exactly \(q^2+q\) lines of \(\Lt\) intersecting \(xy\). Hence \(U\) contains precisely \(q^2+q+1\) lines of \(\Lt\).
	
	There are clearly \(q+1\) ideal points in \(U\) which are all on \(xy\).
	We also know that \(U\) is a subspace of \((xy)^\polarity\) which contains \(xy\). By Lemma \ref{lem:lineTangent}(\ref{lem:lineTangent:noIsoSub}), U has no isolated points. 
	
	\begin{figure}[h]
		\begin{center}
			\resizebox{0.15\textwidth}{!}{
				\begin{tikzpicture}[outer sep=3, inner sep=3]
					\foreach \P in {
						(0,0) coordinate (x) node[anchor=east]{\(x\)},
						(0,-1) coordinate (y) node[anchor=east]{\(y\)}, 
						(0,-2) coordinate (z)
					}	
					\filldraw \P circle (3pt);
					
					\draw[thick]
					(x) -- (0,-3)
					(x) --node[anchor=south]{\(N\)} (2,1) 
					(y) --node[anchor=north]{\(L\)} (2,-1)
					(z) --node[anchor=north]{\(M\)} (2,-2.5);
			\end{tikzpicture}}
		\end{center}	
		\vspace{-1em}
		\caption{Case 2}
	\end{figure}
	
\end{proof}

\begin{corollary}\label{cor:4dMax}
	\textbf{(4d')} Every \(4\)-dimensional subspace of \(\PG\) contains at most \(q^2+2q+1\) elements of \(\Lt\).
\end{corollary}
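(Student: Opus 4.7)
The plan is to reduce the arbitrary \(4\)-space case to the \(\Lt\)-supported case already handled in Lemma \ref{lem:4supported}. Given an arbitrary \(4\)-space \(U\) of \(\PG\), let \(V := \langle \Lt(U) \rangle\) be the subspace spanned by the lines of \(\Lt\) contained in \(U\). Since every line of \(\Lt\) lying in \(V\) is in particular a line of \(\Lt\) in \(U\), the lines \(\Lt(V) = \Lt(U)\) span \(V\) by construction, so \(V\) is \(\Lt\)-supported. Clearly \(\dim V \le 4\), and \(|\Lt(U)| = |\Lt(V)|\).

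Next I would split on \(\dim V\) and invoke the corresponding structural lemma in each case. If \(\dim V = 0\) or \(1\), then trivially \(|\Lt(V)| \le 1\). If \(\dim V = 2\), Lemma \ref{lem:2supported} gives \(|\Lt(V)| = q+1\). If \(\dim V = 3\), Lemma \ref{lem:3supported} gives \(|\Lt(V)| \le 2q+1\). Finally, if \(\dim V = 4\), Lemma \ref{lem:4supported} gives \(|\Lt(V)| \le q^2+2q+1\). In every case the number of lines is bounded by \(q^2+2q+1\), which is the desired conclusion.

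There is no real obstacle: the entire content is packaged inside Lemma \ref{lem:4supported}, and the only new ingredient is the elementary observation that the span of \(\Lt(U)\) is automatically \(\Lt\)-supported, after which a trivial comparison of the bounds for the various possible dimensions finishes the argument.
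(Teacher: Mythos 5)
Your proof is correct and follows essentially the same route as the paper, which states (4d') as an immediate corollary of Lemma \ref{lem:4supported}: you have simply made explicit the routine reduction that the span of \(\Lt(U)\) is an \(\Lt\)-supported subspace of dimension at most \(4\), whose line count is then bounded by Lemmas \ref{lem:2supported}, \ref{lem:3supported} and \ref{lem:4supported}. Nothing is missing.
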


\begin{corollary}\label{cor:noHexIn4}
	Let \(U\) be a \(4\)-dimensional \(\Lt\)-supported subspace of \(\PG\), then \(U\) does not contain any hexagon of \(\T\).
\end{corollary}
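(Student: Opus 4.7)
The plan is to argue by contradiction. Suppose a hexagon $\mathscr{H}$ of $\T$ is contained in $U$. Each of the six vertices of $\mathscr{H}$ is incident with two lines of $\mathscr{H}$, both of which lie in $U$. Since $\Lt$ satisfies (Pt) and (Pl) (Lemmas \ref{lem:propPt} and \ref{lem:pl}), the remark following Definition \ref{def:IsolatedAndIdeal} leaves only three options for a point of $U$ on a line of $\Lt$: isolated, on a unique line of $\Lt$ in $U$, or ideal. A vertex of $\mathscr{H}$ lies on at least two lines of $\Lt$ in $U$, so it must be ideal in $U$.

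Having forced all six hexagon vertices to be ideal in $U$, the next step is to combine this with the structure theorem for ideal points supplied by Lemma \ref{lem:4supported}. That lemma describes the ideal points of $U$ in only two configurations: either (i) they form $q+1$ collinear points on a common line of $\T$, or (ii) they consist of a distinguished point $z\in\Pt$ together with $q+1$ points on a line of $\Q$ that is not a line of $\T$, each of which is collinear in $\T$ with $z$ (this second fact being recorded inside the proof of Lemma \ref{lem:4supported}, Case 1).

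The finishing move is to observe that every hexagon of $\T$ contains a pair of opposite vertices (at distance $6$ in $\T$) and to check that neither configuration supplies such a pair. In case (i), any two ideal points lie on a common line of $\T$ and are therefore at distance $2$. In case (ii), either one of the points is $z$, giving distance $2$, or both lie on the line $N$ of $\Q$ which is not a line of $\T$; then they are not at distance $2$, but share $z$ as a common $\T$-neighbour, so they are at distance at most $4$. Either way no two ideal points of $U$ are opposite, contradicting the existence of $\mathscr{H}$.

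The only step which is not essentially bookkeeping is the last distance argument in case (ii): I need to be sure that the $q+1$ ideal points on $N$ are genuinely pairwise collinear with $z$ in $\T$, so that they are at distance $4$ from each other rather than possibly $6$. This is implicit in the construction in the proof of Lemma \ref{lem:4supported}, but if one wanted a self-contained presentation it would be worth isolating as a short preliminary remark before applying the opposite-pair argument.
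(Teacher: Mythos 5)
Your proof is correct and is essentially the argument the paper intends: the corollary is stated without proof as an immediate consequence of Lemma \ref{lem:4supported}, and your route --- hexagon vertices lie on two lines of \(\Lt(U)\), hence are ideal by (Pt)/(Pl), while neither configuration of ideal points in Lemma \ref{lem:4supported} contains a pair of opposite points --- is the natural fleshing-out of that deduction. One small simplification: the collinearity of the \(q+1\) ideal points with \(x\) in your case (ii) is already recorded in the \emph{statement} of Lemma \ref{lem:4supported} (``each line through \(x\) contains two ideal points, one of which is \(x\)''), so you do not need to appeal to its proof.
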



We now turn our attention to \(\Lt\)-supported 5-spaces. A crucial part will be Lemma \ref{lem:hexToSub} which we prove in a few steps.

\begin{lemma}\label{lem:xL5}
	Let \(V\) be a 5-dimensional subspace of \(\PG\) containing an \(\Lt\)-supported plane \(\pi\) and  a line \(L\) of \(\Lt\) which does not intersect \(\pi\). 
	Then, there are at most two ideal points of \(V\) on \(L\).	
	\end{lemma}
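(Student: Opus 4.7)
My plan is to argue by contradiction: assume $L$ contains three distinct ideal points $y_1, y_2, y_3$ of $V$, and derive an impossibility. For each $i$ I would invoke flatness (Lemma \ref{lem:Flat}) to form the plane $\pi_i \subseteq V$ spanned by the $q+1$ lines of $\Lt$ through $y_i$; all three planes contain $L$. Observing that two distinct ideal points in a common plane would force that plane to contain more than $q+1$ lines of $\Lt$ (violating Lemma \ref{lem:pl}), I would conclude $\pi_i \cap \pi_j = L$ for $i \neq j$. The solid $\langle \pi_1, \pi_2 \rangle$ would then contain exactly $2q+1$ lines of $\Lt$, and forcing $\pi_3$ into this solid would yield $3q+1$ lines, contradicting Lemma \ref{lem:3supported}. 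Hence $W := \langle \pi_1, \pi_2, \pi_3 \rangle$ is a $4$-dimensional $\Lt$-supported subspace in which $y_1, y_2, y_3$ are all ideal.

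Next, Lemma \ref{lem:4supported} leaves two possibilities for $|\Lt(W)|$; the case $q^2+2q+1$ is eliminated because the associated line $N$ (on $\Q$ but not in $\Lt$) would have to contain at least two of the $y_i$ and hence equal $L \in \Lt$, a contradiction. So $|\Lt(W)| = q^2+q+1$, every line of $\Lt(W)$ meets $L$, and exactly $q+1$ points of $L$ are ideal in $W$. I would then consider how $\pi$ sits relative to $W$ inside $V$: since $\dim V = 5$ and $\dim W = 4$, we have $\dim(\pi \cap W) \geq 1$. The case $\pi \subseteq W$ fails because the lines through $x$ lie in $\pi \subseteq W$ yet miss $L$, contradicting the description of $W$. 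If $\pi \cap W$ is a line $M \in \Lt$, the same issue arises since $M \subseteq \pi$ cannot meet $L$. If $M$ is a non-$\Lt$ line through $x$, then every $\Lt$-line through $x$ (all in $\pi$) escapes $W$, making $x$ isolated in $W$, which contradicts Lemma \ref{lem:4supported}.

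The main obstacle will be the remaining subcase: $M = \pi \cap W$ is a non-$\Lt$ line avoiding $x$. Then $M$ meets each of the $q+1$ lines of $\Lt$ through $x$ in a distinct point, giving $q+1$ points $z_1, \ldots, z_{q+1}$ of $\Pt$ on $M$. Each $z_i$ must be non-isolated in $W$, and since $xz_i \subseteq \pi$ is not in $W$, some other $\Lt$-line through $z_i$ in $W$ must pass through one of the ideal points, say $y_{f(i)}$, on $L$. I would verify that $f$ is well-defined (a second such connection would produce a triangle $z_i y_j y_{j'}$ with sides in $\Lt$, impossible in $\T$) and injective: if $f(i)=f(i')$ for $i \neq i'$, then two length-$4$ paths from $z_i$ to $z_{i'}$ in $\T$, one through $x$ and one through $y_{f(i)}$, would give distinct midpoints, violating the uniqueness of $z_i \bowtie z_{i'}$. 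Bijectivity of $f$ would then force $\delta(x, y_j) = 4$ for every ideal $y_j$ on $L$. But $\pi \cap L = \emptyset$ and $x$ ideal in $V$ imply no point of $L$ is collinear with $x$ in $\T$, so $\delta(x, L) = 5$ and Corollary \ref{cor:UniqueClosest} gives a unique point of $L$ at distance $4$ from $x$. For $q \geq 2$ this contradicts the existence of $q+1 \geq 3$ such points, yielding the desired impossibility.
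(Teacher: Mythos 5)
Your proof is correct, and every result you invoke (Lemmas \ref{lem:Flat}, \ref{lem:pl}, \ref{lem:3supported}, \ref{lem:4supported} and Corollary \ref{cor:UniqueClosest}) precedes Lemma \ref{lem:xL5} in the paper, so there is no circularity; but your route differs from the paper's in its key tool. The paper also starts from the three ideal points and obtains a $4$-space $U \subseteq V$ through $L$ (citing Lemma \ref{lem:ThreeLinesIntersectingACommonLine}, whose dimension statement your first paragraph essentially re-proves), but then immediately observes that $U$ lies inside $L^\polarity$, so that by Lemma \ref{lem:lineTangent} every point of \(\Pt(U)\) is at distance at most $3$ from $L$; since $\delta(x,L)=5$ forces $x \notin L^\polarity$, the line $\pi \cap U$ misses $x$, carries $q+1$ points of \(\Pt\), and these all lie in \(\Pt_{\le 3}(L)\cap\Pt_{\le 2}(x)\), which is a single point by Corollary \ref{cor:UniqueClosest} --- an immediate contradiction. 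You avoid the polarity entirely: you instead pin down the structure of your $4$-space $W$ through Lemma \ref{lem:4supported} (all lines of \(\Lt(W)\) meet $L$, no isolated points), run a case analysis on $\pi \cap W$, and in the main subcase construct an injection $f$ from the $q+1$ points of \(\Pt\) on $M = \pi \cap W$ to the ideal points of $W$ on $L$, producing $q+1$ points of $L$ at distance $4$ from $x$ against the uniqueness coming from $\delta(x,L)=5$. The two arguments share the same skeleton (three ideal points, a $4$-space through $L$, the line in which $\pi$ meets it, and a final appeal to Corollary \ref{cor:UniqueClosest}), but the paper's use of $L^\polarity$ collapses your entire second and third paragraphs --- the elimination of the $q^2+2q+1$ case, the three degenerate subcases for $\pi \cap W$, and the well-definedness and injectivity of $f$ --- into a single line. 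What your version buys is independence from the polarity machinery of Lemma \ref{lem:lineTangent}, replacing it with the purely combinatorial classification of \(\Lt\)-supported $4$-spaces, at the cost of considerable length and case-checking.
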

	\begin{proof}
Assume there are three ideal points of \(V\) on \(L\). Lemma \ref{lem:ThreeLinesIntersectingACommonLine} implies that there must be a 4-dimensional subspace \(U\subseteq V\) through \(L\) which, by Lemma \ref{lem:lineTangent}(\ref{lem:lineTangent:lines}), is also a subspace of \(L^\polarity\). 
Let \(x\) be the unique ideal point of \(\pi\). 

The line \(L\) does not intersect \(\pi\) and therefore \(x\) is at distance \(5\) of \(L\) and not contained in \(U\subseteq L^\polarity\) (Lemma \ref{lem:lineTangent}(\ref{lem:lineTangent:points})). 
Since \(V\) has dimension 5, the plane \(\pi\) intersects \(U\) in a line. Since this line does not contain \(x\), we find \(q+1\) points in \(\Pt(U)\cap\Pt(\pi)\). 

However, since \(L\) does not intersect \(\pi\), we have \(\delta(L,x) = 5\) and hence there is a unique point in \(\Pt_{\le 3}(L)\cap \Pt_{\le 2}(x)\) (Corollary \ref{cor:UniqueClosest}). Lemma \ref{lem:lineTangent}(\ref{lem:lineTangent:points})
and Corollary \ref{cor:trialityBasic}(\ref{cor:trialityBasic:points}) imply that
\(\Pt_{\le 3}(L)\cap \Pt_{\le 2}(x) \supseteq \Pt(U)\cap\Pt(\pi)\). We have found a contradiction.

\end{proof}

\begin{corollary}\label{cor:Hex2Ideal}
Let \(V\) be a 5-dimensional subspace of \(\PG\) containing a hexagon \(\mathscr{H} = (L_1,L_2,L_3,L_4,L_5,L_6)\) of \(\T\). 
Then, for each of the lines \(L_i\) of \(\mathscr{H}\), there are exactly \(2\) ideal points of \(V\) on \(L_i\) (the points of the hexagon).
\end{corollary}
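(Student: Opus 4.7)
The plan is to combine two ingredients: every vertex of $\mathscr{H}$ lying on $L_i$ is automatically ideal in $V$, and Lemma~\ref{lem:xL5} provides a matching upper bound of two ideal points on $L_i$ once we produce a suitable $\Lt$-supported plane in $V$ disjoint from $L_i$. I index so that $L_j$ is incident with the two hexagon vertices $x_j$ and $x_{j+1}$ (subscripts mod $6$).

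For the lower bound, I would fix a hexagon vertex $x_j$: both hexagon lines $L_{j-1}$ and $L_j$ through $x_j$ already lie in $V$. By flatness of the embedding (Lemma~\ref{lem:Flat}) the $q+1$ lines of $\Lt$ through $x_j$ are coplanar, and any two distinct ones span that common plane; so it equals $\langle L_{j-1},L_j\rangle\subseteq V$. Hence every line of $\Lt$ through $x_j$ lies in $V$, so $x_j$ is ideal. This gives the two ideal points $x_i, x_{i+1}$ on $L_i$.

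For the upper bound, I would apply Lemma~\ref{lem:xL5} to $V$, the line $L_i\in\Lt$, and the plane $\pi:=\langle L_{i+2},L_{i+3}\rangle\subseteq V$, which is $\Lt$-supported because it is spanned by two lines of $\Lt$. The main obstacle is to verify the remaining hypothesis, that $\pi\cap L_i=\emptyset$ in $\PG$. My plan is to suppose $p\in\pi\cap L_i$ and derive a contradiction. Since the embedding is full (Remark~\ref{rem:Full}), $p\in\Pt$. Both $L_{i+2}$ and $L_{i+3}$ pass through the common hexagon vertex $x_{i+3}$, so by Corollary~\ref{cor:trialityBasic}(\ref{cor:trialityBasic:lines}) they and hence their span $\pi$ lie inside $x_{i+3}^{\triality}$. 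Corollary~\ref{cor:trialityBasic}(\ref{cor:trialityBasic:points}) then gives $p\in\Pt_{\le 2}(x_{i+3})$, i.e.\ $d(p, x_{i+3})\le 2$ in $\T$.

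To complete the contradiction, I would show $d(L_i, x_{i+3})=5$: the hexagon supplies the path $L_i, x_{i+1}, L_{i+1}, x_{i+2}, L_{i+2}, x_{i+3}$ of length $5$, and any strictly shorter path from $L_i$ to $x_{i+3}$ (necessarily of length $1$ or $3$) would together with this hexagon path form a closed walk containing a cycle of length at most $8$ in the incidence graph, contradicting the girth $12$ guaranteed by Corollary~\ref{cor:incidenceGraph}. Thus $d(p, x_{i+3})\ge 4$, contradicting the bound above. Lemma~\ref{lem:xL5} then caps the number of ideal points of $V$ on $L_i$ at two, and combined with the first step the count is exactly two, namely $x_i$ and $x_{i+1}$.
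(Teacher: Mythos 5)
Your proposal is correct and takes essentially the same route as the paper: the paper's proof of this corollary is literally ``This follows directly from Lemma~\ref{lem:xL5},'' and your argument simply makes that application explicit. The details you supply --- the hexagon vertices being ideal by flatness (Lemma~\ref{lem:Flat}), the choice of the \(\Lt\)-supported plane \(\langle L_{i+2},L_{i+3}\rangle\) at an opposite vertex, and its disjointness from \(L_i\) via Corollary~\ref{cor:trialityBasic} together with the girth/distance argument --- are exactly the verifications the paper leaves implicit.
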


\begin{lemma}\label{lem:LandHex}
Let \(V\) be a 5-dimensional subspace of \(\PG\) containing a hexagon \(\mathscr{H} = (L_1,L_2,L_3,L_4,L_5,L_6)\) of \(\T\). Let \(M\) be a line of \(\Lt(V)\) which intersects one of the lines of \(\mathscr{H}\).

Then, there exists a hexagon \(\mathscr{H}'\) of \(\T\) through \(M\) in \(V\).
\end{lemma}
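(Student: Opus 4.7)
The plan is to first use flatness to force the intersection point $M\cap L_i$ to be a hexagon vertex, and then construct the required hexagon by projecting through the opposite vertex of $\mathscr{H}$. Label the hexagon vertices $v_i := L_i \cap L_{i+1}$ (indices modulo $6$, so $v_0 = L_6 \cap L_1$). Without loss of generality $M$ meets $L_1$ at a point $x$. Since $L_1$ and $M$ are two distinct lines of $\T$ through $x$ lying in $V$, Lemma~\ref{lem:Flat} identifies the plane $\langle L_1, M\rangle \subseteq V$ with the plane of flatness at $x$, so all $q+1$ lines of $\T$ through $x$ lie in $V$ and $x$ is $\Lt$-ideal in $V$. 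Corollary~\ref{cor:Hex2Ideal} then forces $x \in \{v_0, v_1\}$; after a possible orientation reversal I may assume $x = v_1$.

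Next, I would build one arc of the target hexagon using the opposite vertex $v_4 = L_4 \cap L_5$. Since $v_1 \in M$ and $\delta(v_1, v_4) = 6$, one has $\delta(M, v_4) = 5$, so Corollary~\ref{cor:UniqueClosest} produces a unique $p^\ast \in M$ with $\delta(p^\ast, v_4) = 4$; set $u = p^\ast \bowtie v_4$ and write $L_{uv_4} = uv_4$, $L_{p^\ast u} = p^\ast u$. The plane of flatness at $v_4$ equals $\langle L_4, L_5 \rangle \subseteq V$, so $L_{uv_4} \subset V$, whence $u \in V$ and $L_{p^\ast u} \subset V$. Both $L_6$ and $L_{p^\ast u}$ lie in $T(v_1, v_4)$ by direct distance-$3$ checks, and they are distinct because $M \cap L_6 = \emptyset$: the plane $\langle L_1, L_2\rangle$ meets $L_6$ only in $v_0$, and $v_0 \in M$ would force $M = L_1$.

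The key structural step is to show that the whole distance-$3$-trace $T(v_1, v_4)$ is contained in $V$. Since $v_1, v_4 \in T(L_6, L_{p^\ast u})$, the regulus $R(L_6, L_{p^\ast u})$ is contained in $\Lines_3(v_1) \cap \Lines_3(v_4) = T(v_1, v_4)$, and as both sets have $q+1$ elements they coincide. The Case~2 analysis of Lemma~\ref{lem:3supported} then places this regulus inside the solid $\langle L_6, L_{p^\ast u}\rangle \subseteq V$. Pick any $L_b \in T(v_1, v_4) \setminus \{L_{p^\ast u}\}$; the unique $\T$-line $L_a$ through $v_4$ meeting $L_b$ at a point $w_1$, and the unique $\T$-line $L_c$ through $v_1$ meeting $L_b$ at a point $w_2$, lie in the planes of flatness at $v_4$ and $v_1$ respectively, hence in $V$. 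The six lines $M, L_{p^\ast u}, L_{uv_4}, L_a, L_b, L_c$ together with consecutive vertices $v_1, p^\ast, u, v_4, w_1, w_2$ form a closed walk of length $12$ in the incidence graph of $\T$; routine distinctness checks (for instance $u \notin L_b$ because $L_b$ and $L_{p^\ast u}$ are mutually opposite) rule out repetition, so the girth-$12$ property of Corollary~\ref{cor:incidenceGraph} promotes the walk to the desired hexagon $\mathscr{H}' \subset V$ through $M$. I expect the main obstacle to be the identification $T(v_1, v_4) = R(L_6, L_{p^\ast u})$ together with the confinement of this regulus inside $V$; once that is established, the hexagon assembles from lines already known to lie in $V$.
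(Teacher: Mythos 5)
Your proof is correct, and its first two stages coincide with the paper's own argument: you reduce to the case where \(M\) passes through a vertex of \(\mathscr{H}\) (flatness plus Corollary \ref{cor:Hex2Ideal}), and then use the opposite vertex \(v_4 = L_4\cap L_5\), Corollary \ref{cor:UniqueClosest} and the flatness plane \(\langle L_4,L_5\rangle\subseteq V\) to produce \(p^\ast\), \(u\) and the two lines \(p^\ast u\), \(uv_4\) inside \(V\) --- these are exactly the paper's \(x'\), \(y'\), \(x'y'\), \(y'y\). Where you diverge is the closure of the hexagon. The paper simply observes that \(\mathscr{H}' = (M, x'y', y'y, L_5, L_6, L_1)\) is a hexagon through \(M\) contained in \(V\), reusing three lines of \(\mathscr{H}\) that are trivially in \(V\). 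You instead manufacture a fresh closing arc: you prove \(T(v_1,v_4) = R(L_6, p^\ast u)\), confine this regulus to the solid \(\langle L_6, p^\ast u\rangle\subseteq V\) via the second case of Lemma \ref{lem:3supported} (legitimate, since \(L_6\) and \(p^\ast u\) are opposite by Corollary \ref{cor:TraceOpposite}, which rules out the \(2q+1\) case), and then connect an arbitrary trace line \(L_b\) back to \(v_1\) and \(v_4\) through the two flatness planes. This is sound --- indeed, taking \(L_b = L_6\) in your construction forces \(L_a = L_5\), \(L_c = L_1\) and recovers the paper's hexagon verbatim --- but it is superfluous for the stated lemma, since the lines \(L_5, L_6, L_1\) needed to close the cycle are already known to lie in \(V\) and no trace-containment argument is required. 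What your longer route buys is a strictly stronger intermediate fact (the entire trace \(T(v_1,v_4)\) lies in \(V\), so \(M\) lies on many hexagons in \(V\)), which anticipates the counting done later in Lemma \ref{lem:hexToSub}; the cost is the extra machinery and a longer list of nondegeneracy checks, all of which do go through.
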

\begin{proof}
By Corollary \ref{cor:Hex2Ideal}, each line of \(\mathscr{H}\) has exactly two ideal points. Hence, without loss of generality, we may assume \(M\neq L_1\) and \(M\) intersects \(\mathscr{H}\) in the point \(L_1\cap L_2 =: x\). Let \(y\) be the intersection of \(L_4\) and \(L_5\). It is easy to see that \(\delta(M,y) = 5\) (Corollary \ref{cor:UniqueClosest}). 
Let \(x'\) be the unique point on \(M\) at distance \(4\) from \(y\) and let \(y' = x'\bowtie y\). Since \(y'\) is contained in \(\pi = \langle L_4,L_5 \rangle\) by property (Pl) (Lemma \ref{lem:pl}), we know that \(y'\) is contained in \(V\).
We have constructed the hexagon \(\mathscr{H}' = (M,x'y',y'y,L_5,L_6,L_1)\) and shown that this hexagon is contained in \(V\).
\end{proof}

\begin{lemma}\label{lem:hexToSub}
Let \(V\) be a 5-dimensional subspace of \(\PG\) containing a hexagon \(\mathscr{H} = (L_1,L_2,L_3,L_4,L_5,L_6)\) of \(\T\). 
Then, \(V\) intersects \(\T\) in a subhexagon of order \((1,q)\).
\end{lemma}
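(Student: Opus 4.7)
The plan begins with a dimensional observation: the six vertices of the standard hexagon in Example \ref{ex:AbsoluteHexagon} are the basis vectors \(e_0, e_1, e_2, e_4, e_5, e_6\), which are linearly independent and span a 5-space. By transitivity of \(\mathrm{Aut}(\T)\) on ordered hexagons, the six vertices of every hexagon of \(\T\) are linearly independent, so any 5-space containing such a hexagon equals its span. Hence \(V = \langle \mathscr{H}\rangle\), and we may assume without loss of generality that \(\mathscr{H}\) is the standard hexagon and \(V = \{x_3 = x_7 = 0\}\); this is contained in the hyperplane \(H = \{x_3 + x_7 = 0\}\) which, by the discussion preceding Section \ref{sec:EmbeddedTProp}, intersects \(\T\) in the split Cayley subhexagon \(\SCHex\), so that \(\Lt(H)\) is exactly the line set of \(\SCHex\).

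Next I would identify a candidate subhexagon \(\Gamma^*\). Pick two opposite vertices \(x, y\) of \(\mathscr{H}\); by Remark \ref{rem:SubhexagonsSCHandTT}, there is an ideal split Cayley subhexagon of \(\T\) containing \(x, y\), which also contains \(\mathscr{H}\) by Corollary \ref{cor:DistanceInSub}, and within that split Cayley subhexagon the same remark produces an ideal \((1, q)\)-subhexagon \(\Gamma^*\) containing \(x, y\) and hence \(\mathscr{H}\). To show \(\Gamma^* \subseteq V\), I would argue hexagon-by-hexagon: for each vertex \(v\) of a hexagon of \(\Gamma^*\) already in \(V\), the plane spanned in \(V\) by the two hexagon-lines through \(v\) lies in \(V\), and by flatness (Lemma \ref{lem:Flat}) this plane contains all \(q+1\) lines of \(\T\) through \(v\); by ideal-ness these are precisely the \(q+1\) \(\Gamma^*\)-lines through \(v\). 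For each such new line \(M\), Lemma \ref{lem:LandHex} then produces a further hexagon of \(\T\) in \(V\) through \(M\) (its intersection point with the previous hexagon being an ideal point, hence a vertex by Corollary \ref{cor:Hex2Ideal}), whose two new vertices receive the same treatment. Since \(\Gamma^*\) is a generalised hexagon (hence connected), iterating covers every line of \(\Gamma^*\).

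Finally I would verify the reverse inclusion \(\Lt(V) \subseteq \Lt(\Gamma^*)\). Since \(V \subseteq H\) and \(\Lt(H) = \Lt(\SCHex)\), every line of \(\Lt(V)\) is a line of \(\SCHex\). Within the natural \(\mathsf{PG}(6, q)\)-ambient of \(\SCHex\), the 5-space \(V\) (defined by an \(\F_q\)-equation) restricts to a hyperplane containing \(\mathscr{H}\); by the structure theory of hyperplane intersections of the natural embedding of \(\SCHex\) (see \cite{ThasVanMaldeghem2008}), such a hyperplane intersects \(\SCHex\) in a \((1, q)\)-subhexagon, which must be \(\Gamma^*\) since it contains \(\mathscr{H}\). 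Combining the inclusions gives \(\Lt(V) = \Lt(\Gamma^*)\), and the ideal points of \(V\) coincide with the points of \(\Gamma^*\), giving the desired \((1, q)\)-subhexagon. The main obstacle is precisely this last step: ruling out extra lines of \(\Lt\) in \(V\) beyond \(\Gamma^*\) requires either invoking the classification of hyperplane intersections of the \(\SCHex\)-embedding, or a direct coordinate computation using the explicit formulas for absolute lines of \(\triality\) that lie in \(V = \{x_3 = x_7 = 0\}\).
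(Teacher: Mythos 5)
Your overall strategy (reduce by transitivity to the standard hexagon with \(V = \{x_3 = x_7 = 0\}\), then read off \(\Lt(V)\) inside the split Cayley subhexagon of the hyperplane \(x_3+x_7=0\)) is genuinely different from the paper's, and the coordinate reduction itself is legitimate --- the paper uses the same transitivity of \(\mathrm{Aut}(\T)\) on ordered hexagons, realised by collineations of \(\PG\), in Lemma \ref{lem:ThreeLinesIntersectingACommonLine}. But your construction of \(\Gamma^*\) contains a genuine error: Corollary \ref{cor:DistanceInSub} applies only to \emph{non-opposite} elements, precisely because opposite points have many shortest paths between them. A split Cayley subhexagon containing two opposite vertices \(x,y\) of \(\mathscr{H}\) contains \emph{some} paths of length \(6\) from \(x\) to \(y\), but there is no reason it contains the two particular paths that make up \(\mathscr{H}\); the same objection applies to the inner \((1,q)\)-subhexagon. (This paragraph is also redundant: once you have normalised \(\mathscr{H}\) to the standard hexagon, its vertices have \(\F_q\)-coordinates, so \(\mathscr{H}\) visibly lies in the standard split Cayley subhexagon and in a standard \((1,q)\)-subhexagon of it --- transitivity, not Corollary \ref{cor:DistanceInSub}, is the correct tool.)

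The second and more serious issue is the one you yourself flag as ``the main obstacle'': ruling out extra lines of \(\Lt\) in \(V\). Citing the three-type classification of hyperplane sections of \(\SCHex\) in \(\mathsf{PG}(6,q)\) from \cite{ThasVanMaldeghem2008} does not by itself finish the argument: you must still show that the hyperplane \(V \cap \mathsf{PG}(6,q)\), because it contains a hexagon, cannot be of the tangent type (lines at distance \(\le 3\) from a point) or of the hermitian-spread type. The spread type is immediate (its lines are pairwise opposite), but the tangent type needs a girth argument: if all six lines of a hexagon were at distance \(\le 3\) from a point \(z\), then a vertex \(v\) of the hexagon at distance \(\ge 4\) from \(z\) (which exists, since the hexagon has opposite vertices) would admit two distinct paths of length \(4\) from \(z\) ending in different hexagon lines, producing a cycle of length \(\le 8\). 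Without this (or an equivalent) supplement, the crux of the lemma remains unproven in your write-up. For contrast, the paper's proof stays entirely synthetic and self-contained inside \(V\): Lemma \ref{lem:LandHex} puts every line of \(\Lt(V)\) meeting \(\mathscr{H}\) into a hexagon contained in \(V\), Corollary \ref{cor:Hex2Ideal} then gives exactly two ideal points on each such line, and a count yields \(|\Lt(V)| = (q+1)(q^2+q+1)\) with no short cycles, which forces the \((1,q)\)-subhexagon structure --- no coordinates and no appeal to the classification of hyperplane sections of \(\SCHex\). Your route is salvageable and arguably shorter once the two gaps above are patched, but as written both the forward inclusion and the reverse inclusion rest on unjustified steps.
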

\begin{proof}
We first show that for every line \(L\) in \(\Lt(V)\), there are exactly \(2\) ideal points of \(V\) on \(L\).
We do this constructively and start with all lines through \(L_1\cap L_2 =: x\). 
Let \(M_{(1)}\) be such a line. By Lemma \ref{lem:LandHex}, \(M_{(1)}\) is contained in a hexagon in \(V\).
By Corollary \ref{cor:Hex2Ideal}, the line \(M_{(1)}\) has exactly \(2\) ideal points in \(V\). 

Next we go to all lines of \(\Lt_V\) at distance \(3\) from \(x\). Let \(M_{(3)}\) be such a line and let \(y\) be the unique point on \(M_{(3)}\) at distance 2 from \(x\). As a line through \(x\), the line \(xy\) has already been proven to be in a hexagon \(\mathscr{H}'\). Hence, we can again use Lemma \ref{lem:LandHex} and Corollary \ref{cor:Hex2Ideal}.

Finally, all lines at distance \(5\) from \(x\) again follow the same pattern since we have now also shown that all points at distance \(4\) from \(x\) are on a hexagon.

We now count the lines in \(\Lt(V)\). There are \(q+1\) lines through \(x\) which each contain a unique ideal point of \(V\) at distance \(2\) of \(x\). Hence there are \((q+1)q\) lines of \(\Lt_3(x)\) in \(V\) and also equally many points of \(\Pt_4(x)\) which are ideal in \(V\). By going one step further we get \((q+1)q^2\) lines of \(\Lt_5(x)\) in \(V\). These must be all lines of \(\Lt(V)\) and therefore:
\[
|\Lt(V)| = (q+1)q^2+ (q+1)q + (q+1) = (q+1)(q^2+q+1).
\]
Since \(\Lt\) contains no \(k\)-gons for \(k\in\{2,3,4,5\}\) we inherit this property in \(\Lt(V)\) and hence we can easily see that \(\Lt(V)\) (and the ideal points of \(V\)) has the structure of a hexagon of order \((1,q)\).
\end{proof}


\begin{lemma}\label{lem:supported5supported4}
Let \(V\) be a \(5\)-dimensional \(\Lt\)-supported subspace of \(\PG\) such that \(V\) contains two intersecting lines of \(\Lt\). Then, there exists a 4-dimensional subspace \(U\) of \(V\) which is \(\Lt\)-supported.
\end{lemma}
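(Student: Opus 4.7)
The plan is to start from the two intersecting lines of $\Lt$ in $V$, build up an $\Lt$-supported plane $\pi$ from them via Lemma \ref{lem:2supported}, and then extend $\pi$ by either one or two additional lines of $\Lt(V)$, choosing them carefully so that the resulting span has dimension exactly $4$ (and is automatically $\Lt$-supported, since it is spanned by lines of $\Lt$).

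First I would let $L_1, L_2 \in \Lt(V)$ intersect at a point $x$; by Lemma \ref{lem:2supported} the plane $\pi = \langle L_1, L_2 \rangle$ is $\Lt$-supported. Since $V$ is $5$-dimensional and $\Lt$-supported, there exists a line $L_3 \in \Lt(V)$ not contained in $\pi$. I would then split into two cases according to how $L_3$ meets $\pi$.

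Case A: some line $L \in \Lt(V)$ is disjoint from $\pi$. Then $U := \langle \pi, L \rangle$ has dimension $4$ and is spanned by the $q+1$ lines of $\Lt$ in $\pi$ together with $L$, hence is $\Lt$-supported.

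Case B: every line of $\Lt(V)$ meets $\pi$. In particular our chosen $L_3 \not\subseteq \pi$ meets $\pi$ in a single point, so $\Sigma := \langle \pi, L_3 \rangle$ has dimension $3$ and is $\Lt$-supported (spanned by $\Lt(\pi) \cup \{L_3\}$). Because $V$ is $\Lt$-supported and $\dim V = 5 > 3 = \dim \Sigma$, there exists some $L_4 \in \Lt(V)$ with $L_4 \not\subseteq \Sigma$. By the Case B hypothesis $L_4$ meets $\pi \subseteq \Sigma$, so $L_4 \cap \Sigma$ is a single point, and therefore $U := \langle \Sigma, L_4 \rangle$ has dimension $4$; it is $\Lt$-supported as it is spanned by $\Lt(\Sigma) \cup \{L_4\}$.

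I do not expect any real obstacle here: the argument is purely a dimension-count combined with the dichotomy ``some line of $\Lt(V)$ avoids $\pi$ vs.\ all of them meet $\pi$''. The only point that needs a moment's care is verifying in Case B that once we have constructed the solid $\Sigma$, a further line $L_4 \in \Lt(V)$ not already in $\Sigma$ must exist (otherwise $\Lt(V)$ would span only $\Sigma$, contradicting that $V$ is $\Lt$-supported of dimension~$5$), and that $L_4$ cannot be disjoint from $\Sigma$ because of the Case B hypothesis applied to $\pi \subseteq \Sigma$.
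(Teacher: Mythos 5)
Your proof is correct and follows essentially the same route as the paper: start from the $\Lt$-supported plane $\pi$ spanned by the two intersecting lines, then extend by one line disjoint from $\pi$ (giving dimension $4$ at once) or, when every line of $\Lt(V)$ meets $\pi$, by two further lines, with the dimension count guaranteeing the result. Your global dichotomy (``some line avoids $\pi$'' versus ``all lines meet $\pi$'') is just a slightly tidier packaging of the paper's line-by-line case analysis; the substance is identical.
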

\begin{proof}
Let \(\pi\) be a plane through two intersecting lines of \(\Lt(V)\). Since \(V\) is \(\Lt\)-supported, there must exist a line \(L\) of \(\Lt(V)\) which is not contained in \(\pi\). If \(L\) does not intersect \(\pi\), then the \(\Lt\)-supported subspace \(\langle \pi ,L\rangle\) has dimension 4 and we are done. If \(L\) does intersect \(\pi\), then we can find another line \(M\) of \(\Lt(V)\) not in \(\langle \pi ,L \rangle\) where we again use that \(V\) is \(\Lt\)-supported. Either \(\langle \pi ,M \rangle\) or \(\langle \pi ,L ,M \rangle\) will now be a 4-dimensional \(\Lt\)-supported subspace.
\end{proof}

\begin{lemma}\label{lem:LM5}
Let \(V\) be a 5-dimensional subspace of \(\PG\) containing two lines \(L\) and \(M\) of \(\Lt\) with \(\delta(L,M) = 4\). Then, there are at most \(q+1\) ideal points of \(V\) on \(L\). 
\end{lemma}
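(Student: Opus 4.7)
The plan is to bound the number of ideal points on \(L\) via an auxiliary \(\Lt\)-supported subspace. Define \(U\) to be the subspace of \(\PG\) spanned by those lines of \(\Lt\) that lie in \(V\) and meet \(L\); by construction \(U\) is \(\Lt\)-supported and contains \(L\). The key observation is that \(\dim U \leq 4\). Indeed, every spanning line of \(U\) meets \(L\), so by Lemma \ref{lem:lineTangent}(\ref{lem:lineTangent:lines}) it is contained in \(L^\polarity\), giving \(U \subseteq V \cap L^\polarity\). Since \(\delta(L,M) = 4\), the same lemma shows \(M \not\subseteq L^\polarity\), so \(V \neq L^\polarity\). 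As \(V\) and \(L^\polarity\) are two distinct \(5\)-dimensional subspaces of \(\PG\), their intersection has dimension at most \(4\), and hence so does \(U\).

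Next I would argue by contradiction: suppose \(L\) carries \(k \geq q+2\) ideal points of \(V\). Through each such point, \(q\) lines of \(\Lt\) different from \(L\) lie in \(V\) and meet \(L\), so they lie in \(U\); two distinct ideal points on \(L\) contribute disjoint families of further lines (a common line would share two points with \(L\), forcing it to be \(L\)), so \(U\) contains at least \(kq + 1 \geq q^2 + 2q + 1\) lines of \(\Lt\). Comparing with Lemmas \ref{lem:1supported}--\ref{lem:4supported}, the only way an \(\Lt\)-supported subspace of dimension at most \(4\) can carry this many lines is that \(\dim U = 4\) and \(U\) falls in case 2 of Lemma \ref{lem:4supported}.

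The contradiction then comes from the rigid structure of that case: all lines of \(\Lt(U)\) pass through a common point \(x \in \Pt\), and the \(q+2\) ideal points of \(U\) consist of \(x\) together with \(q+1\) points on a line \(N_0 \subseteq \Q\) which is not a line of \(\T\). Every ideal point of \(V\) that lies on \(L\) is in fact ideal in \(U\), because all \(q+1\) lines of \(\Lt\) through such a point meet \(L\) and therefore lie in \(U\) by the very definition of \(U\). Hence all \(k\) such points would be among the \(q+2\) ideal points of \(U\). But \(L \neq N_0\) (since \(L \in \Lt\) while \(N_0 \notin \Lt\)), so \(L\) meets \(N_0\) in at most one point, and \(L\) contains at most \(2\) ideal points of \(U\). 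For \(q \geq 2\) this contradicts \(k \geq q+2\), giving the desired bound \(k \leq q+1\).

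The main obstacle is the dimension inequality \(\dim(V \cap L^\polarity) \leq 4\); this is precisely where the hypothesis \(\delta(L,M) = 4\) enters, through Lemma \ref{lem:lineTangent}(\ref{lem:lineTangent:lines}). Once that inequality is in place, the rest is a clean consequence of the case analysis of \(\Lt\)-supported \(4\)-spaces in Lemma \ref{lem:4supported} and the fact that the ``degenerate'' line \(N_0\) of case 2 cannot coincide with the genuine hexagon line \(L\).
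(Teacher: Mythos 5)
Your proof is correct, and it reaches the contradiction by a genuinely different route than the paper. Both arguments start from the same two facts: every line of \(\Lt(V)\) meeting \(L\) lies in \(L^\polarity\) (Lemma \ref{lem:lineTangent}(\ref{lem:lineTangent:lines})), and \(\delta(L,M)=4\) forces \(V\neq L^\polarity\). The paper then invokes Lemma \ref{lem:ThreeLinesIntersectingACommonLine} directly: more than \(q+1\) ideal points yield more than \(q^2+q\) lines of \(\Lt(V)\) meeting \(L\), too many to fit in a \(4\)-space, so these lines span all of \(V\); hence \(V\subseteq L^\polarity\), i.e.\ \(V=L^\polarity\), contradicting the presence of \(M\). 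You instead run the dimension argument first (your subspace \(U\subseteq V\cap L^\polarity\) has dimension at most \(4\)) and then appeal to the classification of \(\Lt\)-supported subspaces of dimension at most \(4\) (Lemmas \ref{lem:1supported}--\ref{lem:4supported}): the count \(kq+1\ge q^2+2q+1\) forces the second case of Lemma \ref{lem:4supported}, whose \(q+2\) ideal points are one point \(x\) together with \(q+1\) points on a line of \(\Q\) that is not in \(\Lt\), so at most two of them can lie on \(L\) --- a contradiction since \(k\ge q+2\ge 4\). Your switch to this structural description is genuinely needed, because for \(k=q+2\) exactly the line count alone is inconclusive (it equals \(q^2+2q+1\) on the nose); and there is no circularity, since Lemma \ref{lem:4supported} precedes this lemma. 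The paper's argument is shorter and rests only on the coordinate computation of Lemma \ref{lem:ThreeLinesIntersectingACommonLine}; yours trades that for the heavier, already-established structure theory, which buys a more geometric contradiction (the ideal points of that configuration cannot cluster on a single hexagon line).

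One small erratum that does not affect your argument: in describing case 2 of Lemma \ref{lem:4supported} you assert that all lines of \(\Lt(U)\) pass through the common point \(x\). That is false --- it would violate property (Pt), which caps the number of lines of \(\Lt\) through any point at \(q+1\); in that configuration only \(q+1\) of the \(q^2+2q+1\) lines pass through \(x\). Fortunately you never use this claim: your contradiction relies only on the (correctly quoted) description of the \(q+2\) ideal points.
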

\begin{proof}
Assume that there are more than \(q+1\) ideal points of \(V\) on \(L\). 
This means we find more than \(q^2+q\) lines in \(V\), different from \(L\), intersecting \(L\).
By Lemma \ref{lem:ThreeLinesIntersectingACommonLine}, the subset of all lines in \(\Lt(V)\) which intersect \(L\) must span the full 5-dimensional space \(V\). Since they are also all contained in \(L^\polarity\), it follows that \(V = L^\polarity\) but this contradicts the existence of \(M\) in \(\Lt(V)\) (see Lemma \ref{lem:lineTangent}(\ref{lem:lineTangent:lines})).
\end{proof}

A more complete description will be proven later in Lemma \ref{lem:5supported}, but for now we can state the following:
\begin{lemma}\label{lem:5supportedInt}
Let \(V\) be a \(5\)-dimensional \(\Lt\)-supported subspace of \(\PG\) containing two intersecting lines of \(\Lt\). 

Then, one of the following is true:
\begin{itemize}
	
	\item \(|\Lt(V)| = q^3+q^2+q+1\).
	
	More specifically, there is a point \(x\) in \(V\) such that all lines of \(\Lt(V)\) are contained in \(\Lt_{\le 3}(x)\) and each line through \(x\) intersects \(q^2+q\) of the other lines.
	All ideal points of \(V\) are on lines of \(\Lt(V)\) though \(x\) and therefore coplanar.
	Each of the lines through \(x\) contains precisely \(q+1\) ideal points of \(V\).
	
	\item \(|\Lt(V)| = q^3+2q^2+2q+1\).
	
	More specifically, all lines of \(\Lt(V)\) (together with the ideal points of \(V\)), form a subhexagon of order \((1,q)\).
	
	\item \(|\Lt(V)| = q^4+q+1\).
	
	More specifically, there exists a line \(M\) in \(\Lt(V)\) such that \(\Lt(V) = \Lt_{\le 2}(M)\).
	There are \(q^3+1\) ideal points in \(V\), they are precisely all the points on \(M\).
\end{itemize}
\end{lemma}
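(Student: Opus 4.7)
My plan is to start from a $4$-dimensional $\Lt$-supported subspace $U$ of $V$, which exists by Lemma~\ref{lem:supported5supported4}, and to split according to Lemma~\ref{lem:4supported}: either \textbf{(A)} $|\Lt(U)|=q^2+q+1$, with all $q+1$ ideal points of $U$ collinear on a line $\ell\in\Lt$, or \textbf{(B)} $|\Lt(U)|=q^2+2q+1$, featuring a central ideal point $c\in\Pt$ together with $q+1$ further ideal points lying on a line $N$ of $\Q\setminus\T$. By Lemma~\ref{lem:SupSubspace} I may write $V=\langle U,L_0\rangle$ for some $L_0\in\Lt(V)\setminus\Lt(U)$; since $U$ has no isolated points and $p:=L_0\cap U$ cannot be ideal in $U$, there is a unique line $L'\in\Lt(U)$ through $p$.

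The overarching dichotomy is whether $V$ contains a hexagon of $\T$. If it does, Lemma~\ref{lem:hexToSub} immediately gives a subhexagon of order $(1,q)$ with $(q+1)(q^2+q+1)=q^3+2q^2+2q+1$ lines, the second outcome. Otherwise I show that we land in either the first or the third outcome, distinguishing according to the type of $U$.

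In case (A) the crucial observation is that $U\subseteq\ell^\polarity$, where $\ell^\polarity$ is itself $5$-dimensional. Either $V=\ell^\polarity$, in which case Lemma~\ref{lem:lineTangent}(\ref{lem:lineTangent:lines}) gives $\Lt(V)=\Lt_{\le 2}(\ell)$ with exactly $q^4+q+1$ lines, yielding the third outcome with $M=\ell$; or $V\ne\ell^\polarity$, and then the interaction of $L_0$ with the $q+1$ ideal points on $\ell$, combined with Lemma~\ref{lem:ThreeLinesIntersectingACommonLine}, forces a hexagon to appear in $V$, placing us in the second outcome. In case (B), the central point $c$ will play the role of the apex of the first outcome: when $L'\ni c$, every line of $\Lt(V)$ meets a line through $c$, so $\Lt(V)\subseteq\Lt_{\le 3}(c)$, and counting the $q+1$ lines through $c$ and the $q$ further lines through each of the $q$ non-$c$ ideal points on each such line produces exactly $q^3+q^2+q+1$ lines. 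The complementary sub-case of (B), where $L'$ avoids $c$, forces $\delta(L_0,c)=5$ (since all lines through $c$ sit in $U$ and meet $L_0$ only at $p$), and the common neighbour of $c$ with the unique closest point of $L_0$, together with the flatness of the embedding at $c$, produces a hexagon of $\T$ inside $V$, falling back into the second outcome.

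The main obstacle is the hexagon construction in the sub-case of (A) with $V\ne \ell^\polarity$: one must pick a second ideal point $y'\ne L'\cap\ell$ on $\ell$, use Corollary~\ref{cor:UniqueClosest} to obtain the unique line $K\in\Lt(U)$ through $y'$ closest to $L_0$, and verify that the common perpendicular between $K$ and $L_0$ lies entirely in $V$. This verification combines flatness (Lemma~\ref{lem:Flat}) at each intermediate point with the absence of isolated points in the relevant tangent spaces (Lemma~\ref{lem:IsolatedPointsInSandwichedSubspace} and Lemma~\ref{lem:lineTangent}(\ref{lem:lineTangent:noIsoSub})). Once the hexagon is located in $V$, Lemma~\ref{lem:hexToSub} takes over, and the descriptive claims in each of the three outcomes (ideal-point configuration and line counts) follow by direct bookkeeping using (Pt), (Pl) and Corollary~\ref{cor:4dMax}.
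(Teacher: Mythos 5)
Your overall framework (writing $V=\langle U,L_0\rangle$ via Lemma~\ref{lem:supported5supported4}, splitting on the two types of $U$ from Lemma~\ref{lem:4supported}, and invoking Lemma~\ref{lem:hexToSub} once a hexagon is found) is the same as the paper's, and your case (B) matches the paper's Case 2: your condition ``$L'\ni c$'' is equivalent to the paper's ``$L_0$ meets $\pi$''. The fatal problem is case (A) with $V\ne\ell^\polarity$, i.e.\ $L_0$ disjoint from $\ell$. You claim this forces a hexagon and hence the second outcome; in fact this is exactly the paper's Case 1.b, which yields the \emph{first} outcome: $|\Lt(V)|=q^3+q^2+q+1$ with $\Lt(V)\subseteq\Lt_{\le 3}(x)$, where $x$ is the point at which the unique line of $\Lt(U)$ through $p=L_0\cap U$ meets $\ell$. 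No hexagon of $\T$ can have all six of its lines at distance at most $3$ from a point $x$: none of its lines can pass through $x$ (the opposite line would then be at distance at least $5$), so all six are at distance exactly $3$; at most one vertex of the hexagon can be at distance $2$ from $x$ (two adjacent such vertices contradict the uniqueness in Corollary~\ref{cor:UniqueClosest}, two at hexagon-distance $4$ force $x$ to be the vertex between them, and two opposite ones give $\delta\le 4<6$); hence there are two adjacent hexagon lines all of whose vertices are at distance $4$, and their points nearest to $x$ are two distinct non-vertex points admitting both $x$ and the common vertex as common neighbours, contradicting the uniqueness of $\bowtie$. So your case-(A) branch assigns the wrong outcome, and no repair within that branch can produce a hexagon.

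Your proposed construction also degenerates concretely: with $y_0:=L'\cap\ell$ and $y'\ne y_0$ an ideal point on $\ell$, the path $y',\ell,y_0,L',p,L_0$ has length $5$ and, by girth $12$, is the \emph{unique} shortest path from $y'$ to $L_0$; hence the line through $y'$ closest to $L_0$ is $\ell$ itself, its ``common perpendicular'' with $L_0$ is $L'$, and nothing new appears, while every other line $K$ through $y'$ is opposite $L_0$ (a common perpendicular would close a pentagon with $K,\ell,L',L_0$). Separately, even in your correctly-labelled sub-case (B) with $L'\ni c$, the assertion that \emph{every} line of $\Lt(V)$ meets a line through $c$, and that each line through $c$ carries exactly $q+1$ ideal points of $V$, is precisely the hard content of the lemma and cannot simply be asserted: the paper establishes it with the regulus argument (Lemma~\ref{lem:3supported} together with distance-$3$-regularity, Remark~\ref{rem:d3reg}) to pin down the ideal points, and then uses the absence of isolated points in the resulting $\Lt$-supported $4$-spaces (Lemma~\ref{lem:4supported}) to exclude any further lines. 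As it stands, the proposal both misassigns one of the four configurations and leaves the decisive counting step unproved.
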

\begin{proof}
We write \(\Lt_V\) as an abbreviation of the set of lines \(\Lt(V)\).

By Lemma \ref{lem:supported5supported4} and the fact that \(V\) is \(\Lt\)-supported, we may assume that there exist an \(\Lt\)-supported 4-dimensional space \(U\) and a line \(L\) of \(\Lt_V\) not in \(U\) such that \(V = \langle U, L\rangle\).

Given Lemma \ref{lem:4supported}, we can consider the following cases:
\begin{enumerate} 
	\item \(|\Lt(U)| = q^2+q+1\). We let \(M\) be the unique line in \(\Lt(U)\) which intersects all other lines of \(\Lt(U)\).
	\begin{enumerate} 
		\item \(L\) intersects \(M\).
		\item \(L\) does not intersect \(M\).
	\end{enumerate}
	\item \(|\Lt(U)| = q^2+2q+1\). We let \(x\) be the unique point in all the \(\Lt\)-supported planes of \(U\) and \(\pi\) the unique plane through \(x\) in \(U\) such that all lines of \(\Lt(U)\) intersect \(\pi\).
	\begin{enumerate} 
		\item \(L\) intersects \(\pi\).
		\item \(L\) does not intersect \(\pi\).
	\end{enumerate}
\end{enumerate}

\paragraph{Case 1.a: \(|\Lt(U)| = q^2+q+1\) and \(L\) intersects \(M\)}
In this case, both \(U\) and \(L\) are contained in the 5-dimensional subspace \(M^\polarity\) and therefore \(V = M^\polarity\). By Lemma \ref{lem:lineTangent}(\ref{lem:lineTangent:lines}), this subspace contains precisely the lines \(\Lt_{\le 2}(M)\). This gives us \(q^4+q+1\) lines and no isolated points.  
Each point on \(M\) is an ideal point of \(V\) and there are no other ideal points.

\paragraph{Case 1.b: \(|\Lt(U)| = q^2+q+1\) and \(L\) does not intersect \(M\)}
By Lemma \ref{lem:4supported}, \(U\) contains no isolated points. Therefore, we can let \(M_{x,1}\) be the unique line intersecting \(L\) and \(M\) where \(x\) refers to the intersection of \(M_{x,1}\) and \(M\). We already know that all lines of \(\Lt\) through \(x\) are contained in \(\Lt_V\) (property (Pl)).
We enumerate these lines through \(x\) as \(M_{x,i}\) with \(i\in\{1,2,\dots,q+1\}\) and \(M_{x,q+1} = M\).
We now want to show that there are exactly \(q+1\) ideal points of \(V\) on each of the lines \(M_{x,i}\). 

We already know that there are \(q+1\) ideal points on \(M\) in \(U\) and therefore there are also at least \(q+1\) ideal points on \(M\) in \(V\).
We let \(y\) and \(z\) be two ideal points of \(V\) on \(M\) different from \(x\). 
We label the lines through \(y\) and \(z\) as \(M_{y,i}\) and \(M_{z,i}\) respectively with again \(i\in \{1,2,\dots,q+1\}\) and \(M_{y,q+1} = M = M_{z,q+1}\).  

If we consider any line \(M_{x,i}\) with \(i\in \{2,3\dots,q+1\}\) together with \(L\), then Lemma \ref{lem:LM5} implies that there are at most \(q+1\) ideal points on \(M_{x,i}\). For \(M_{x,1}\) we cannot use \(L\) since \(\delta(M_{x,1}, L)\) is \(2\) instead of \(4\), but we can use \(M_{y,1}\) to obtain the same result from Lemma \ref{lem:LM5}.
Hence, every line of \(\Lt\) through \(x\) contains at most \(q+1\) ideal points in \(V\).

Now let \(M_{x,i}\) be a line of \(\Lt\) through \(x\) different from \(M\) and \(M_{x,1}\). 

Each line \(M_{y,j}\) with \(j\in \{1,\dots,q\}\) together with the line \(L\) defines a regulus \(R(M_{y,j},L)\) contained in \(\Lt_V\) (see Lemma \ref{lem:3supported}).
Since \(x\in T(M_{y,j},L)\) this regulus also contains a line \(N_{y,j}\) intersecting \(M_{x,i}\). Similarly, each line \(M_{z,k}\) with \(k\in \{1,\dots,q\}\) together with the line \(L\) gives us a line \(N_{z,k}\) intersecting \(M_{x,i}\) in a point different from \(x\).
Since two reguli can have at most one line in common (see Remark \ref{rem:d3reg}), we know that \(N_{y,j}\in R(M_{y,j},L)\) and \(N_{z,k} \in R(M_{z,k},L)\) have to be different for all \(j\in \{1,\dots,q\}\) and \(k\in \{1,\dots,q\}\).

In total we have found \(2q\) lines intersecting \(M_{x,i}\) not in \(x\). Together with all the lines through \(x\), it follows from Lemma \ref{lem:3supported} and Lemma \ref{lem:4supported} that there are at least \(q+1\) ideal points of \(V\) on \(M_{x,i}\).
Finally, we can swap out \(L\) and \(N_{y,1}\) to find at least \(q+1\) ideal points on \(M_{x,1}\).

Observe that we have constructed an \(\Lt\)-supported 4-dimensional subspace through each of the lines \(M_{x,i}\). Since none of these has any isolated points (Lemma \ref{lem:4supported}), we can see that \(\Lt_V\) cannot contain any lines that do not meet a line through \(x\). 
By counting all the lines we find \(|\Lt_V| = q^3+q^2+q+1\). 

\paragraph{Case 2.a: \(|\Lt(U)| = q^2+2q+1\) and \(L\) intersects \(\pi\)}
Let \(y\) be the intersection point of \(L\) and \(\pi\).
Since there are now three lines in \(\Lt_V\) intersecting \(xy\) in different points, we use Lemma \ref{lem:ThreeLinesIntersectingACommonLine} to find an \(\Lt\)-supported 4-dimensional space in \(V\) containing \(q^2+q+1\) lines. This brings us back to case 1 (and in particular we will end up in case 1.b).

\paragraph{Case 2.b: \(|\Lt(U)| = q^2+2q+1\) and \(L\) does not intersect \(\pi\)}
There exist a unique line through \(x\) which is at distance 4 from \(L\) (see Corollary \ref{cor:UniqueClosest}).
Observe that the unique line which intersects \(\pi\) and \(L\) is contained in \(V\) since both these intersections are in \(V\).
Now let \(M\) be a line though \(x\) opposite \(L\) and let \(y\) be the second ideal point on \(M\) in \(U\) (besides \(x\)).
By Corollary \ref{cor:UniqueClosest}, there is a unique point \(z\) on \(L\) at distance 4 from \(y\). Since \(y\) is ideal in \(V\), the point \(y\bowtie z =: c\) collinear with \(y\) and \(z\) is contained in \(V\). Hence also the line \(cz\) is in \(V\).
This proves the existence of a hexagon though \(M\) and \(L\) in \(V\).

By Lemma \ref{lem:hexToSub} this implies that \(V\) intersects \(\T\) in a subhexagon of order \((1,q)\).
This gives us \(q^3+2q^2+2q^2+1\) lines in \(\Lt_V\) and no isolated points.
\end{proof}

\begin{corollary}\label{cor:pathIn5IsSub}
Let \(V\) be a \(5\)-dimensional \(\Lt\)-supported subspace of \(\PG\) which contains a path of five lines (where we assume all five lines and the successive intersection points to be distinct). Then, \(V\) intersects \(\T\) in a subhexagon of order \((1,q)\).
\end{corollary}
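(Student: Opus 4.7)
The plan is to apply Lemma \ref{lem:5supportedInt} to $V$ and then rule out two of the three resulting cases. Denote the path by $L_1, L_2, L_3, L_4, L_5$ with pairwise distinct consecutive intersection points $x_i = L_i \cap L_{i+1}$ for $i \in \{1,2,3,4\}$. Since $V$ is a $5$-dimensional $\Lt$-supported subspace containing the two intersecting lines $L_1, L_2$, the hypothesis of Lemma \ref{lem:5supportedInt} is satisfied, and its conclusion leaves three possibilities for $|\Lt(V)|$; the middle one is precisely the ``subhexagon of order $(1,q)$'' conclusion we want, so the task is to exclude the other two.

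To eliminate the case $|\Lt(V)| = q^4 + q + 1$, where $\Lt(V) = \Lt_{\le 2}(M)$ for some line $M$, I would use the triangle-free property of $\T$. Every line of the path either equals $M$ or meets $M$ at a single point. For a consecutive pair $L_i, L_{i+1}$ both different from $M$, the three distinct lines $L_i, L_{i+1}, M$ would form a triangle in $\T$ unless two of their pairwise intersection points coincide; a short case check shows this forces $L_i \cap M = L_{i+1} \cap M = x_i \in M$. Propagating this along the path makes every $x_i$ equal to a single point of $M$, contradicting their distinctness. The sub-case in which some $L_j = M$ is handled by applying the same triangle argument to the adjacent pairs not involving $L_j$, which again collapses two consecutive $x_i$'s.

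The main obstacle is ruling out the case $|\Lt(V)| = q^3 + q^2 + q + 1$, where $\Lt(V) \subseteq \Lt_{\le 3}(x)$ for some point $x$. My plan is a girth-based tree argument inside the incidence graph of $\T$. Each line of the path lies within distance $3$ of $x$, so each intersection point $x_i$ lies within distance $4$ of $x$; hence all nine vertices $L_1, x_1, L_2, x_2, L_3, x_3, L_4, x_4, L_5$ of the path belong to the closed ball of radius $4$ around $x$ in the incidence graph. By Corollary \ref{cor:incidenceGraph} this graph has girth $12$, and because $2 \cdot 4 < 12$ the ball contains no cycles and is therefore a tree rooted at $x$. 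The path is then a simple path of length $8$ in this tree between two vertices $L_1, L_5$ each of depth at most $3$; but in any rooted tree, two vertices of depths $d_1, d_2$ sit at distance at most $d_1 + d_2$ (attained when their least common ancestor is the root), so the distance here is at most $3 + 3 = 6$, contradicting the length $8$.

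Once both cases are eliminated, only the middle case of Lemma \ref{lem:5supportedInt} remains, and it states directly that $V$ intersects $\T$ in a subhexagon of order $(1,q)$.
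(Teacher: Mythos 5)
Your proposal is correct, and its skeleton is necessarily the same as the paper's: the paper states Corollary \ref{cor:pathIn5IsSub} with no proof, as an immediate consequence of Lemma \ref{lem:5supportedInt}, so the whole content lies in eliminating the first and third cases, which you do explicitly. Your elimination of the case \(\Lt(V)=\Lt_{\le 2}(M)\) is the expected triangle/collinearity argument. For the case \(|\Lt(V)|=q^3+q^2+q+1\), however, you take a genuinely different and heavier route than the one the lemma itself offers: since each \(x_i\) lies on two lines of \(\Lt(V)\), flatness and (Pl) make it an ideal point of \(V\), and the lemma states that all ideal points of \(V\) are coplanar; hence \(L_2,L_3,L_4\) each contain two of the points \(x_1,\dots,x_4\), so they lie in that plane and pairwise meet in three distinct points, forming a triangle in \(\T\) -- an immediate contradiction. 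Your girth/tree argument reaches the same conclusion using only \(\Lt(V)\subseteq\Lt_{\le 3}(x)\), which is a mild robustness advantage, but it contains one imprecision you should repair: ``girth \(12\) and \(2\cdot 4<12\), hence \(B(x,4)\) contains no cycles'' is not a valid principle for arbitrary graphs (a \(6\)-cycle together with a centre joined to two antipodal vertices has girth \(5>2\cdot 2\), yet its radius-\(2\) ball contains cycles). The claim is true here either because the incidence graph is bipartite -- adjacent vertices have distances from \(x\) differing by exactly \(1\), so a distance-maximal vertex \(v\) of a cycle inside \(B(x,4)\) would yield two distinct \(x\)--\(v\) paths of length at most \(4\) and hence a cycle of length at most \(8<12\) -- or because the correct general threshold \(2\cdot 4+2<12\) is also met; one of these justifications must be said. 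With that supplied, the remaining steps (simplicity of the path, depths at most \(3\) of \(L_1,L_5\), and the tree-distance bound) are sound, and your proof is complete.
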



Before we describe \(5\)-dimensional \(\Lt\)-supported subspaces such that all lines of \(\Lt\) are disjoint, we first turn our attention to 6-dimensional \(\Lt\)-supported subspaces. Similar to Lemma \ref{lem:hexToSub}, a crucial part will be Lemma \ref{lem:hepToSC}. To prove this result, we take similar steps as in Lemma \ref{lem:xL5} and Lemma \ref{lem:LandHex}.

\begin{lemma}\label{lem:xL6}
Let \(W\) be a 6-dimensional subspace of \(\PG\) containing an \(\Lt\)-supported plane \(\pi\) and  a line \(L\) of \(\Lt\) which does not intersect \(\pi\). Then, there are at most \(q+1\) ideal points of \(W\) on \(L\). 
\end{lemma}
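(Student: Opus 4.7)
The plan is to adapt the proof of Lemma~\ref{lem:xL5} one dimension higher. Suppose, for contradiction, that $L$ carries at least $q+2$ ideal points of $W$. Choosing three of them $p_1, p_2, p_3$ and a line of $\Lt$ through each distinct from $L$, Lemma~\ref{lem:ThreeLinesIntersectingACommonLine} yields a $4$-dimensional $\Lt$-supported subspace $U:=\langle L_1,L_2,L_3\rangle\subseteq W$. Since $L$ already contains three ideal points of $U$, Lemma~\ref{lem:4supported} forces $U$ into Case~(1), with exactly $q+1$ ideal points, all on $L$, and with every line of $\Lt(U)$ meeting $L$; hence $U\subseteq L^\polarity$ by Lemma~\ref{lem:lineTangent}(\ref{lem:lineTangent:lines}).

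To step up to dimension $5$, pick an ideal point $p$ of $W$ on $L$ that is not one of the $q+1$ ideal points of $U$; then some line $L_p\in\Lt$ through $p$ lies in $W$ but not in $U$, and $L_p\subseteq L^\polarity$ since it meets $L$. The subspace $V':=\langle U,L_p\rangle$ therefore has dimension $5$, is contained in $L^\polarity$, and equals $L^\polarity$ since $\dim L^\polarity=5$; in particular $L^\polarity\subseteq W$.

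The contradiction comes from the plane $\pi$. Because $L\cap\pi=\emptyset$ and all lines of $\Lt$ through $x$ lie in $\pi$, no line of $\Lt$ through $x$ can meet $L$, so $\delta(x,L)=5$; Lemma~\ref{lem:lineTangent}(\ref{lem:lineTangent:points}) then gives $x\notin L^\polarity$, and hence $\pi\not\subseteq L^\polarity$. Inside $W$ the dimension formula forces $\ell:=\pi\cap L^\polarity$ to be a line of $\pi$ avoiding $x$. Since $\pi\subseteq x^\triality$, Corollary~\ref{cor:trialityBasic}(\ref{cor:trialityBasic:points}) gives $\Pt(\pi)\subseteq\Pt_{\le 2}(x)$, and $\ell$ meets each of the $q+1$ lines of $\Lt$ through $x$ in $\pi$ in exactly one point, producing $q+1$ points in $\Pt(\ell)\subseteq\Pt_{\le 2}(x)\cap\Pt_{\le 3}(L)$. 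But $\delta(x,L)=5$ forces any such point to lie on the unique geodesic from $x$ to its closest point $y$ on $L$ at distance $2$ from $x$, which is the single point $z=x\bowtie y$. Thus $q+1\le 1$, a contradiction.

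The main obstacle is establishing $V'=L^\polarity$ in the second step; once this identification is in place, the geodesic-uniqueness contradiction parallels Lemma~\ref{lem:xL5} and closes the argument.
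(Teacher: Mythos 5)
Your proof is correct and follows essentially the same route as the paper's: both arguments show that more than \(q+1\) ideal points of \(W\) on \(L\) would force a \(5\)-dimensional subspace of \(W\) inside \(L^\polarity\), then intersect it with \(\pi\) to produce \(q+1\) points of \(\Pt_{\le 2}(x)\cap\Pt_{\le 3}(L)\), contradicting the uniqueness coming from \(\delta(x,L)=5\) (Corollary \ref{cor:UniqueClosest}). The only difference is cosmetic: you build the \(5\)-space explicitly via Lemma \ref{lem:ThreeLinesIntersectingACommonLine} and identify it with \(L^\polarity\), whereas the paper obtains it directly from the bound on ideal points in \(4\)-spaces (Lemma \ref{lem:4supported}) and only needs the containment \(V\subseteq L^\polarity\).
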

\begin{proof}

Assume there are more than \(q+1\) ideal points of \(W\) on \(L\). Lemma \ref{lem:4supported} shows that there can be at most \(q+1\) ideal points on a line in a 4 dimensional subspace. Hence, there must be a 5-dimensional subspace \(V\subseteq W\) through \(L\) which, by Lemma \ref{lem:lineTangent}(\ref{lem:lineTangent:lines}), is also a subspace of \(L^\polarity\). 
Let \(x\) be the unique ideal point of \(\pi\).

The line \(L\) does not intersect \(\pi\) and therefore \(x\) is at distance \(5\) of \(L\) and not contained in \(V\subseteq L^\polarity\) (Lemma \ref{lem:lineTangent}(\ref{lem:lineTangent:points})). Since \(W\) has dimension 6, the plane \(\pi\) intersects \(V\) in a line. Since this line does not contain \(x\), we find \(q+1\) points in \(\Pt(V)\cap\Pt(\pi)\). 

However, since \(L\) does not intersect \(\pi\), we have \(\delta(L,x) = 5\) and hence there is a unique point in \(\Pt_{\le 3}(L)\cap \Pt_{\le 2}(x)\) (Corollary \ref{cor:UniqueClosest}). Lemma \ref{lem:lineTangent}(\ref{lem:lineTangent:points})
and Corollary \ref{cor:trialityBasic}(\ref{cor:trialityBasic:points}) imply that
\(\Pt_{\le 3}(L) \cap \Pt_{\le 2}(x) \supseteq \Pt(V)\cap\Pt(\pi)\). We have found a contradiction.
\end{proof}

\begin{corollary}\label{cor:Hepq+1}
Let \(W\) be a 6-dimensional subspace of \(\PG\) containing a heptagon \(\mathscr{H} = \{L_1,L_2,L_3,L_4,L_5,L_6,L_7\}\) of \(\T\). 
Then, for each of lines \(L_i\) of \(\mathscr{H}\) there are exactly \(q+1\) ideal points of \(W\) on \(L_i\).
\end{corollary}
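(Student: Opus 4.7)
The plan is to combine Lemma \ref{lem:xL6} for an upper bound of $q+1$ with Corollary \ref{cor:3thenqp1subpoints} for a matching lower bound, after producing a third ideal point on $L_i$ via Corollary \ref{cor:pathIn5IsSub}.

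For the upper bound I would apply Lemma \ref{lem:xL6} to $L_i$ and the $\Lt$-supported plane $\pi = \langle L_{i+2}, L_{i+3}\rangle$ of $W$. The key step is to show $L_i\cap\pi = \emptyset$. Since $p_i = L_i\cap L_{i+1}$ and $p_{i+1} = L_{i+1}\cap L_{i+2}$ both lie in $\langle L_i, L_{i+2}\rangle$, the line $L_{i+1}$ lies there as well, so $\langle L_i, L_{i+1}, L_{i+2}, L_{i+3}\rangle = \langle L_i, L_{i+2}, L_{i+3}\rangle$. A direct inductive dimension count along the heptagon, in which each new heptagon line meets the current span in exactly one point and adds exactly one dimension, shows this span has dimension $4$. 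Hence $\dim(L_i\cap\pi) = 1 + 2 - 4 = -1$, so $L_i\cap\pi = \emptyset$, and Lemma \ref{lem:xL6} gives at most $q+1$ ideal points of $W$ on $L_i$.

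For the lower bound I would exhibit three ideal points on $L_i$ and invoke Corollary \ref{cor:3thenqp1subpoints}. By flatness, the heptagon vertices $p_{i-1}$ and $p_i$ are ideal in $W$: the planes $\langle L_{i-1}, L_i\rangle$ and $\langle L_i, L_{i+1}\rangle$ lie in $W$ and contain all $\Lt$-lines through $p_{i-1}$ and $p_i$ respectively. For a third, consider $V_0 = \langle L_{i-4}, L_{i-3}, L_{i-2}, L_{i-1}, L_i\rangle$, a $5$-dimensional $\Lt$-supported subspace of $W$ containing the path of five heptagon lines $L_{i-4},\ldots,L_i$. Corollary \ref{cor:pathIn5IsSub} shows that $V_0$ intersects $\T$ in a subhexagon of order $(1,q)$, so $L_i$ carries exactly two ideal points of $V_0$. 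One of them is $p_{i-1}$; the other, call it $q_0$, cannot equal $p_i$, because $L_{i+1}\not\subseteq V_0$ (six consecutive heptagon lines would already span $W$), which keeps the plane $\langle L_i, L_{i+1}\rangle$ outside $V_0$ and so prevents $p_i$ from being ideal in $V_0$. Being ideal in $V_0\subseteq W$ forces $q_0$ to be ideal in $W$ as well, yielding a third ideal point of $W$ on $L_i$ distinct from $p_{i-1}$ and $p_i$. Corollary \ref{cor:3thenqp1subpoints} then gives at least $q+1$ ideal points on $L_i$, and combining the two bounds proves the corollary.

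The main obstacle is the careful dimension bookkeeping in the span of consecutive heptagon lines, used both to justify $\langle L_i, L_{i+1}, L_{i+2}, L_{i+3}\rangle = \langle L_i, L_{i+2}, L_{i+3}\rangle$ for the upper bound and to verify that $V_0$ is a genuine $5$-space containing a path of five lines for the lower bound. Once the heptagon span computation is made precise, the rest is an assembly of already-proved results.
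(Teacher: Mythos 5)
Your proof has the same skeleton as the paper's: the upper bound comes from Lemma \ref{lem:xL6}, and the lower bound comes from producing a third ideal point on \(L_i\) and invoking Corollary \ref{cor:3thenqp1subpoints}. The difference is how the third ideal point is found. The paper does it in two lines: take \(x = L_4\cap L_5\), let \(y\) be the unique point of \(L_1\) at distance \(4\) from \(x\) (Corollary \ref{cor:UniqueClosest}); \(y\) is not a heptagon vertex since those are at distance \(6\) from \(x\), and \(y\) is ideal in \(W\) because \(x(x\bowtie y)\subseteq W\) by (Pl), hence \(x\bowtie y\in W\) and \(y(x\bowtie y)\subseteq W\). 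No dimension computations are needed. Your detour through \(V_0=\langle L_{i-4},\dots,L_i\rangle\) and Corollary \ref{cor:pathIn5IsSub} can be made to work, but it is considerably heavier, and it is exactly where your argument has a genuine gap.

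The gap is in the dimension bookkeeping that you yourself flag as the main obstacle. The assertion that ``each new heptagon line meets the current span in exactly one point'' is not a direct count: at each step one must exclude that the new line is \emph{contained} in the current span, and this requires geometric input of increasing strength --- no triangles for the third line, the structure of solids (Lemma \ref{lem:3supported}) for the fourth, and the structure of \(\Lt\)-supported \(4\)-spaces (Lemma \ref{lem:4supported}) for the fifth, where one has to check that neither of the two configurations admits a path of five lines (in the \(q^2+q+1\) case all lines are pairwise at distance at most \(4\), while \(L_{i-4}\) and \(L_i\) are opposite; in the \(q^2+2q+1\) case the ideal points form a star in the collinearity graph, which carries no path of four distinct vertices). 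More seriously, your justification of \(L_{i+1}\not\subseteq V_0\), namely that ``six consecutive heptagon lines would already span \(W\)'', is circular if it is meant as the next step of the same induction: that step \emph{is} the statement \(L_{i+1}\not\subseteq V_0\) being proved. The non-circular repair is geometric rather than dimensional: if \(L_{i+1}\subseteq V_0\), then the seventh heptagon line meets two lines of \(V_0\) in distinct points and so also lies in \(V_0\); hence the whole heptagon lies in \(V_0\). But by Corollary \ref{cor:pathIn5IsSub}, \(V_0\) intersects \(\T\) in a weak subhexagon of order \((1,q)\), whose lines carry only two points of the subgeometry, so by Remark \ref{rem:ThickGeneralisedHexagon} it contains no heptagon --- a contradiction. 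With that repair (and the Lemma \ref{lem:4supported} check for the fifth line) your argument goes through, but as written the key dimension claims are asserted rather than proved.
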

\begin{proof}
Because of symmetry it suffices to show the required property for \(L_1\). Let \(x\) be the intersection point of \(L_4\) and \(L_5\).
By Corollary \ref{cor:UniqueClosest}, there is a unique point \(y\) on \(L_1\) at distance 4 from \(x\). Clearly \(y\) is not also on \(L_2\) or \(L_7\) since \(L_2\cap L_1\) and \(L_7\cap L_1\) are both at distance \(6\) from \(x\). Let \(z = x \bowtie y\) be the unique point collinear with both \(x\) and \(y\).  Property (Pl) (Lemma \ref{lem:pl}) shows that \(xz\) is contained in \(W\). It follows that \(z\) and therefore also \(yz\) is contained in \(W\). Hence, we have found \(3\) different ideal points on \(L_1\). By Corollary \ref{cor:3thenqp1subpoints} there must be at least \(q+1\) ideal points on \(L_1\).
Lemma \ref{lem:xL6} completes the proof.
\end{proof}

\begin{lemma}\label{lem:LandHep}
Let \(W\) be a 6-dimensional subspace of \(\PG\) containing a heptagon \(\mathscr{H} = \{L_1,L_2,L_3,L_4,L_5,L_6,L_7\}\) of \(\T\). Let \(M\) be a line in \(\Lt(V)\) intersecting one of the lines of \(\mathscr{H}\). 

Then, there exists a heptagon \(\mathscr{H}'\) of \(\T\) through \(M\) in \(V\).
\end{lemma}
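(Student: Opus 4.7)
The plan is to adapt the hexagon construction of Lemma~\ref{lem:LandHex} to the heptagon setting (reading $V=W$ throughout). We may assume $M\notin\mathscr{H}$. Write $p_i:=L_i\cap L_{i+1}$ (indices mod~$7$) for the seven vertices of $\mathscr{H}$, and in particular set $p_4:=L_4\cap L_5$. By the symmetry of a heptagon together with Corollary~\ref{cor:Hepq+1} we may relabel so that $M$ meets $\mathscr{H}$ at a point $x\in L_1$, where either $x=p_1$ or $x\in L_1\setminus\{p_1,p_7\}$ (the case $x=p_7$ is absorbed by a reflective relabelling of the heptagon). In both of these situations $\delta(x,p_4)=6$ in $\T$, because the only point of $L_1$ at distance less than $6$ from $p_4$ is $p_7$: indeed $L_1$ and $\{L_4,L_5\}$ are disjoint (else a shorter $k$-gon would appear in $\mathscr{H}$), so $\delta(L_1,p_4)=5$, and $p_7$ is the unique closest point at distance $4$ by Corollary~\ref{cor:UniqueClosest}.

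Since $M$ passes through $x$ and $\delta(x,p_4)=6$, we get $\delta(M,p_4)=5$. Corollary~\ref{cor:UniqueClosest} then gives a unique $x'\in M$ with $\delta(x',p_4)=4$, and we set $y':=x'\bowtie p_4$. I then propose
\[
\mathscr{H}':=(M,\; x'y',\; y'p_4,\; L_5,\; L_6,\; L_7,\; L_1),
\]
with cyclic intersection vertices $x',y',p_4,p_5,p_6,p_7,x$. Containment of $\mathscr{H}'$ in $W$ is exactly the flatness argument used for the hexagon: since $p_4$ lies on two lines $L_4,L_5\in\Lt$, Lemma~\ref{lem:Flat} combined with (Pl) (Lemma~\ref{lem:pl}) forces every line of $\Lt$ through $p_4$, in particular $y'p_4$, to lie in the plane $\langle L_4,L_5\rangle\subseteq W$; hence $y'\in W$, and then $x'y'\subseteq W$ because $x'\in M\subseteq W$.

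The main obstacle is to show that $\mathscr{H}'$ really is a heptagon of $\T$, rather than collapsing into a smaller $k$-gon. The cases $k\in\{2,3,4,5\}$ are immediate from $\T$ being a generalised hexagon, so only $k=6$ is delicate: we must rule out that two non-adjacent lines of $\mathscr{H}'$ share a common point. Pairs entirely contained in $\{L_1,L_5,L_6,L_7\}$ are safe because $\mathscr{H}$ is already a heptagon, and any meeting of $M$ with one of $L_5,L_6,L_7$ would produce a short cycle in $\mathscr{H}\cup\{M\}$, forcing $M\in\mathscr{H}$ contrary to assumption. The remaining pairs involve one of the newly built lines $x'y'$ or $y'p_4$, and I expect them to be excluded by the uniqueness of $x'$ as the closest point of $M$ to $p_4$ together with the opposite relation $\delta(x,p_4)=6$, exactly as in the corresponding step of Lemma~\ref{lem:LandHex}. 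This distance-rigidity check is the one place where care is needed, but it is routine once the setup above is in place.
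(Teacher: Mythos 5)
Your generic construction coincides with the one the paper uses in its easy cases, but your proof rests on a distance claim that is false, and the ``degenerate'' cases you defer as routine are exactly where the paper's proof does its real work. Writing \(p_i=L_i\cap L_{i+1}\) as you do, you assert that \(p_7\) is the unique point of \(L_1\) at distance \(4\) from \(p_4\), hence that \(\delta(x,p_4)=6\) for every admissible \(x\). In fact \(\delta(p_4,p_7)=6\): a path of length \(4\) from \(p_4\) to \(p_7\), together with the path \(p_4,L_5,p_5,L_6,p_6,L_7,p_7\) of length \(6\), would close into a cycle of length at most \(10\) in the incidence graph, contradicting girth \(12\); the same holds for \(p_1\). (The paper states exactly this inside the proof of Corollary~\ref{cor:Hepq+1}.) Consequently the unique point \(w\in L_1\) at distance \(4\) from \(p_4\) lies in \(L_1\setminus\{p_1,p_7\}\), i.e.\ precisely in the range where you claim \(\delta(x,p_4)=6\). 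This is fatal rather than cosmetic, because lines of \(\Lt(W)\) through \(w\) genuinely occur: the proof of Corollary~\ref{cor:Hepq+1} shows that \(w\) is an \emph{ideal} point of \(W\) (that is how the third ideal point on \(L_1\) is produced there), so all \(q+1\) lines of \(\Lt\) through \(w\) lie in \(W\). Take \(M\) to be the line through \(w\) and \(v:=w\bowtie p_4\). Then \(\delta(M,p_4)=3\), every point of \(M\setminus\{v\}\) is at distance \(4\) from \(p_4\) (so your \(x'\) is not unique), and for any choice of \(x'\) one gets \(x'\bowtie p_4=v\in M\), hence \(x'y'=M\) and your \(\mathscr{H}'\) collapses to the hexagon \((M,vp_4,L_5,L_6,L_7,L_1)\). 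This is Case~1.b of the paper's proof, which is resolved only by building an entirely different heptagon through the vertex \(L_5\cap L_6\) and a point of \(L_2\).

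The second degeneracy you dismiss, \(y'\in L_5\) (so that \(y'p_4=L_5\)), is also unavoidable, and your appeal to ``the corresponding step of Lemma~\ref{lem:LandHex}'' is exactly where the analogy breaks. In the hexagon lemma such a collapse is killed by girth: there, \(y'\in L_5\) would force the cycle \(y',\,x'y',\,x',\,M,\,x,\,L_1,\,L_1\cap L_6,\,L_6,\,L_5\cap L_6,\,L_5,\,y'\) of length \(10\), a pentagon, which cannot exist. In the heptagon setting the analogous configuration closes into a cycle of length \(12\), i.e.\ an ordinary hexagon of \(\T\), which is perfectly legal; so no distance-rigidity argument can exclude it. The paper treats it as a live case: when \(x\neq p_1\) it escapes by reversing the labelling of \(\mathscr{H}\) (the ``without loss of generality'' in its Case~1.a), an option unavailable when \(x=p_1\); in that situation (its Case~2.b) it must invoke Corollary~\ref{cor:Hepq+1} to obtain an ideal point of \(W\) on \(L_6\) and route a new heptagon through \(L_2\). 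So the four-case structure of the paper's proof is forced, and the steps you label as routine checks are precisely the substance of the lemma, including the only places where the \(6\)-dimensional ambient hypothesis enters via Corollary~\ref{cor:Hepq+1}.
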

\begin{proof}
We may assume that \(M\) intersects \(L_1\) and \(M\not\in \{L_1,L_2,L_7\}\).
Let \(x = L_1 \cap M\).
Symmetry allows us to assume that \(x\) is not on \(L_7\).
By Corollary \ref{cor:UniqueClosest}, there is at least one point \(y\) on \(M\) at distance 4 from the intersection point \(z\) of \(L_4\) and \(L_5\). Let \(c\) be the unique point collinear with \(y\) and \(z\).
We split into cases based on \(x\) and \(c\):
\begin{enumerate}
	\item \(x \not\in L_2\)
	\begin{enumerate}
		\item  \(c\not \in M\)
		\item  \(c \in M\)
	\end{enumerate}
	\item \(x \in L_2\)
	\begin{enumerate}
		\item  \(c\not \in L_5\)
		\item  \(c \in L_5\)
	\end{enumerate}
\end{enumerate}

\paragraph{Case 1.a: \(x \not\in L_2\), \(c\not \in M\)}
Since \(c \neq z\) but collinear with \(z\), without loss of generality we may assume that \(c\not\in L_5\). We have constructed the heptagon \[\{M,yc,cz,L_5,L_6,L_7,L_1\}.\]
Property (Pl) (Lemma \ref{lem:pl}) guarantees that \(c\) and therefore this heptagon is contained in \(V\). Note that it is possible that \(cz = L_4\).

\begin{figure}[h]
	\begin{center}
		\resizebox{0.25\textwidth}{!}{
			\begin{tikzpicture}[outer sep=3, inner sep=3]
				\foreach \P in {
					(0:2.5) coordinate (p1),
					(1*360/7:2.5) coordinate (p2), 
					(2*360/7:2.5) coordinate (p3), 
					(3*360/7:2.5) coordinate (p4), 
					(4*360/7:2.5) coordinate (p5) node[anchor=east]{\(z\)} , 
					(5*360/7:2.5) coordinate (p6), 
					(6*360/7:2.5) coordinate (p7), 
				}	
				\filldraw \P circle (3pt);
				\coordinate (x) at ($(p1)!0.5!(p2)$);
				\filldraw (x) circle (3pt) node[anchor=south east ]{\(x\)};
				\filldraw (5.5*360/7:1) coordinate (c) circle (3pt) node[anchor=west]{\(c\)} ;
				\filldraw (0,0) coordinate (y) circle (3pt) node[anchor=south east]{\(y\)} ;
				
				\draw[thick]
				(p1) -- node[anchor= west]{\(L_1\)} 
				(p2) --node[anchor=south]{\(L_2\)} 
				(p3) --node[anchor=south east]{\(L_3\)} 
				(p4) --node[anchor=east]{\(L_4\)} 
				(p5) --node[anchor=north east]{\(L_5\)} 
				(p6) --node[anchor=north]{\(L_6\)} 
				(p7) --node[anchor=west]{\(L_7\)} 
				(p1)
				(x) --node[anchor=south]{\(M\)}  (0,0)
				(0,0) -- (c)
				(c) -- (p5);
				;
		\end{tikzpicture}}
	\end{center}	
	\vspace{-1em}
	\caption{Case 1.a}
\end{figure}
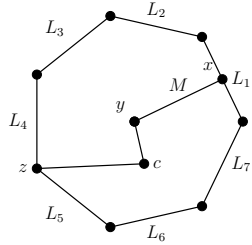

\paragraph{Case 1.b: \(x \not\in L_2\), \(c \in M\)}
Note that \(c\) cannot be on \(L_5\) since otherwise we would find a pentagon through the lines \(L_1, L_7, L_6, L_5\) and \(M\).
Let \(x'\) be the intersection point of \(L_5\) and \(L_6\).
Let \(z'\) be the unique point on \(L_2\) at distance 4 from \(x'\) (Corollary \ref{cor:UniqueClosest}) and let \(c' = x'\bowtie z'\) be the unique point collinear with \(x'\) and \(z'\). We can see that \(z'\neq L_1 \cap L_2\) since otherwise we would find a pentagon. Similarly, \(c'\) is not on \(L_5\).

We have constructed the heptagon \[\{M,cz,L_5,x'c',c'z',L_2,L_1\}.\]
All lines through \(x'\) are contained in \(V\) and therefore also \(x'c'\). Similarly, \(cz\) is a line in \(V\). It follows that this heptagon is contained in \(V\).

\begin{figure}[h]
	\begin{center}
		\resizebox{0.25\textwidth}{!}{
			\begin{tikzpicture}[outer sep=3, inner sep=3]
				\foreach \P in {
					(0:2.5) coordinate (p1),
					(1*360/7:2.5) coordinate (p2), 
					(2*360/7:2.5) coordinate (p3), 
					(3*360/7:2.5) coordinate (p4), 
					(4*360/7:2.5) coordinate (p5) node[anchor=east]{\(z\)}, 
					(5*360/7:2.5) coordinate (p6) node[anchor=north]{\(x'\)}, 
					(6*360/7:2.5) coordinate (p7) 
				}	
				\filldraw \P circle (3pt);
				
				\coordinate (x) at ($(p1)!0.5!(p2)$);
				\filldraw (x) circle (3pt) node[anchor=south east]{\(x\)};
				\filldraw (5.5*360/7:1) coordinate (c) circle (3pt) node[anchor=west]{\(c\)} ;
				\filldraw (-0.5,0.5) coordinate (cp) circle (3pt) node[anchor=south east]{\(c'\)} ;
				\coordinate (xp) at ($(p2)!0.5!(p3)$);
				\filldraw (xp) circle (3pt) node[anchor=north]{\(z'\)};
				
				\draw[thick]
				(p1) --node[anchor=west]{\(L_1\)} 
				(p2) --node[anchor=south]{\(L_2\)} 
				(p3) --node[anchor=south east]{\(L_3\)} 
				(p4) --node[anchor=east]{\(L_4\)} 
				(p5) --node[anchor=north east]{\(L_5\)} 
				(p6) --node[anchor=north]{\(L_6\)} 
				(p7) --node[anchor=west]{\(L_7\)} 
				(p1)
				(x) --node[anchor=south]{\(M\)}  (c)
				(c) -- (p5)
				(p6) -- (cp)
				(cp) -- (xp)
				;
			\end{tikzpicture}
		}
	\end{center}	
	\vspace{-1em}
	\caption{Case 1.b}
\end{figure}
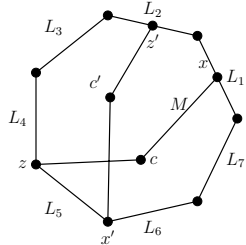

\paragraph{Case 2.a: \(x \in L_2\), \(c\not \in L_5\)}
We have constructed the heptagon \[\{M,yc,cz,L_5,L_6,L_7,L_1\}.\]
Property (Pl) (Lemma \ref{lem:pl}) guarantees that \(c\) and \(y\) are contained in \(V\). Hence, this heptagon is contained in \(V\).
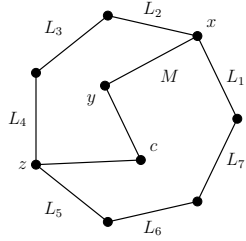
\begin{figure}[h]
	\begin{center}
		\resizebox{0.25\textwidth}{!}{
			\begin{tikzpicture}[outer sep=3, inner sep=3]
				\foreach \P in {
					(0:2.5) coordinate (p1),
					(1*360/7:2.5) coordinate (p2) node[anchor=south west]{\(x\)}, 
					(2*360/7:2.5) coordinate (p3), 
					(3*360/7:2.5) coordinate (p4), 
					(4*360/7:2.5) coordinate (p5) node[anchor=east]{\(z\)}, 
					(5*360/7:2.5) coordinate (p6), 
					(6*360/7:2.5) coordinate (p7) 
				}	
				\filldraw \P circle (3pt);
				
				\coordinate (c) at (5.5*360/7:1);
				\filldraw (c) circle (3pt) node[anchor=south west]{\(c\)};
				\coordinate (y) at (2.5*360/7:1);
				\filldraw (y) circle (3pt) node[anchor=north east]{\(y\)};
				
				\draw[thick]
				(p1) --node[anchor=west]{\(L_1\)} 
				(p2) --node[anchor=south]{\(L_2\)} 
				(p3) --node[anchor=south east]{\(L_3\)} 
				(p4) --node[anchor=east]{\(L_4\)} 
				(p5) --node[anchor=north east]{\(L_5\)} 
				(p6) --node[anchor=north]{\(L_6\)} 
				(p7) --node[anchor=west]{\(L_7\)} 
				(p1)
				(p5) -- (c)
				(c) -- (y)
				(y) --node[anchor=north west]{\(M\)}  (p2)
				;
			\end{tikzpicture}
		}
	\end{center}	
	\caption{Case 2.a}
\end{figure}

\paragraph{Case 2.b: \(x \in L_2\), \(c \in L_5\)}
By Corollary \ref{cor:Hepq+1} there exists an ideal point \(x'\) on \(L_6\) not on \(L_5\) and not on \(L_7\). Let \(z'\) be the unique point on \(L_2\) at distance 4 from \(x'\) (Corollary \ref{cor:UniqueClosest}) and let \(y' = x'\bowtie z'\) be the unique point collinear with \(x'\) and \(z'\). Note that it is possible that \(z'\) is on \(L_3\).
We have constructed the heptagon \[\{M,yc,L_5,L_6,x'y',y'z',L_2\}.\]

All lines through \(x'\) are contained in \(V\) and therefore also \(x'y'\), showing that this heptagon is contained in \(V\).

\begin{figure}[h]
	\begin{center}
		\resizebox{0.25\textwidth}{!}{
			\begin{tikzpicture}[outer sep=3, inner sep=3]
				\foreach \P in {
					(0:2.5) coordinate (p1),
					(1*360/7:2.5) coordinate (p2) node[anchor=west]{\(x\)}, 
					(2*360/7:2.5) coordinate (p3), 
					(3*360/7:2.5) coordinate (p4), 
					(4*360/7:2.5) coordinate (p5) node[anchor=east]{\(z\)}, 
					(5*360/7:2.5) coordinate (p6), 
					(6*360/7:2.5) coordinate (p7) 
				}	
				\filldraw \P circle (3pt);
				\coordinate (c) at ($(p5)!0.5!(p6)$);
				\filldraw (c) circle (3pt) node[anchor=south west]{\(c\)};
				\coordinate (y) at (3*360/7:1);
				\filldraw (y) circle (3pt) node[anchor=north east]{\(y\)};
				\coordinate (xp) at ($(p2)!0.5!(p3)$);
				\filldraw (xp) circle (3pt) node[anchor=north east]{\(z'\)};
				\coordinate (yp) at (6.5*360/7:1.5);
				\filldraw (yp) circle (3pt) node[anchor= west]{\(y'\)};
				\coordinate (cp) at ($(p6)!0.5!(p7)$);
				\filldraw (cp) circle (3pt) node[anchor= south]{\(x'\)};
				
				\draw[thick]
				(p1) --node[anchor=west]{\(L_1\)} 
				(p2) --node[anchor=south]{\(L_2\)} 
				(p3) --node[anchor=south east]{\(L_3\)} 
				(p4) --node[anchor=east]{\(L_4\)} 
				(p5) --node[anchor=north east]{\(L_5\)} 
				(p6) --node[anchor=north]{\(L_6\)} 
				(p7) --node[anchor=west]{\(L_7\)} 
				(p1)
				(c) -- (y)
				(y) --node[anchor=north ]{\(M\)}  (p2)
				(cp) -- (yp) -- (xp)
				;
			\end{tikzpicture}
		}
	\end{center}
	\caption{Case 2.b}	
\end{figure}
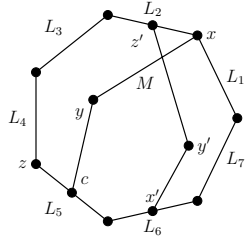

\end{proof}

The following proof is similar to the proof of Lemma \ref{lem:hexToSub} but since it deviates at a few important points, we include it here.

\begin{lemma}\label{lem:hepToSC}
Let \(W\) be a 6-dimensional subspace of \(\PG\) containing a heptagon \(\mathscr{H} = (L_1,L_2,L_3,L_4,L_5,L_6,L_7)\) of \(\T\). 
Then, \(W\) intersects \(\T\) in a split Cayley hexagon of order \((q,q)\). This split Cayley hexagon is regularly embedded in a subspace \(\mathsf{PG}(6,q)\) of \(W\). 
\end{lemma}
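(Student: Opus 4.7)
The plan is to follow the strategy of Lemma \ref{lem:hexToSub} closely, replacing the hexagon there by the heptagon \(\mathscr{H}\) and the \((1,q)\)-subhexagon by a split Cayley subhexagon of order \((q,q)\).

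First I would show that every line \(L\) in \(\Lt(W)\) carries exactly \(q+1\) ideal points of \(W\). For the seven lines of \(\mathscr{H}\) this is Corollary \ref{cor:Hepq+1}. For an arbitrary line \(M\) in \(\Lt(W)\), the plan is to produce a heptagon of \(\T\) through \(M\) contained in \(W\) via Lemma \ref{lem:LandHep}, and then apply Corollary \ref{cor:Hepq+1}. Since Lemma \ref{lem:LandHep} requires \(M\) to meet a line of some heptagon already known to sit in \(W\), I would propagate outward from a fixed vertex \(x = L_1\cap L_2\) of \(\mathscr{H}\) by distance levels: first treat the lines through \(x\), then the lines at distance \(3\) from \(x\), and finally the lines at distance \(5\), exactly as in the proof of Lemma \ref{lem:hexToSub}.

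Next I would count. With \(q+1\) ideal points on each line of \(\Lt(W)\) and \(q+1\) lines of \(\Lt(W)\) through each ideal point (property (Pt)), the distance-from-\(x\) enumeration yields \(q+1\) lines at distance \(1\), \(q^2(q+1)\) lines at distance \(3\), and \(q^4(q+1)\) lines at distance \(5\), so
\[|\Lt(W)| = (q+1)(1+q^2+q^4),\]
matching Corollary \ref{cor:Size} for order \((q,q)\). Because \(\T\) has no \(k\)-gons for \(2\le k<6\), the same holds in the incidence structure \(\Gamma'\) formed by \(\Lt(W)\) together with the ideal points of \(W\). Combined with the uniform incidence numbers above, Corollary \ref{cor:incidenceGraph} identifies \(\Gamma'\) as a weak generalised hexagon of order \((q,q)\), and the presence of the heptagon \(\mathscr{H}\) makes it thick.

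Finally I would invoke Theorem \ref{thm:KeyTheorem}(1). The embedding of \(\Gamma'\) in \(W = \mathsf{PG}(6,q^3)\) inherits flatness from Lemma \ref{lem:Flat}, and is polarised by Lemma \ref{lem:pointTangent}(\ref{lem:pointTangent:points}): for any point \(y\) of \(\Gamma'\), the set of points of \(\Gamma'\) not opposite \(y\) lies in \(y^\polarity\cap W\), which after a brief dimension check is a hyperplane of \(W\). Hence \(\Gamma'\) is regularly embedded in \(W\), and Theorem \ref{thm:KeyTheorem}(1) provides a subspace \(\mathsf{PG}(6,q)\) of \(W\) over \(\mathrm{GF}(q)\subseteq\mathrm{GF}(q^3)\) in which \(\Gamma'\) is naturally embedded, with \(\Gamma'\) classical. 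Since the only classical generalised hexagon of order \((q,q)\) is the split Cayley hexagon \(\SCHex\), the claim follows. The main obstacle I anticipate is the first step: ensuring that Lemma \ref{lem:LandHep} can be iterated far enough to reach every line of \(\Lt(W)\), and that the distance-level enumeration neither double-counts nor omits any lines; a secondary delicate point is verifying regularity of the embedding of \(\Gamma'\) carefully enough that Theorem \ref{thm:KeyTheorem}(1) applies as a black box.
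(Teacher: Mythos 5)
Your proposal follows essentially the same route as the paper's proof: propagate Lemma \ref{lem:LandHep} together with Corollary \ref{cor:Hepq+1} outward from \(x = L_1\cap L_2\) by distance levels to get exactly \(q+1\) ideal points on every line of \(\Lt(W)\), count \(|\Lt(W)| = (q+1)(q^4+q^2+1)\), observe the resulting geometry is a thick generalised hexagon of order \((q,q)\) whose embedding in \(W\) is flat and polarised (via \(y^\polarity\cap W\) being a proper subspace of \(W\)), and apply Theorem \ref{thm:KeyTheorem}(1). The only cosmetic difference is the last step, where the paper invokes Theorem \ref{thm:KeyTheorem}(2) to identify the hexagon as \(\SCHex\), while you appeal to the classification of classical hexagons of order \((q,q)\); both are fine.
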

\begin{proof}
We follow similar steps as in Lemma \ref{lem:hexToSub}.

We first show that for every line \(L\) in \(\Lt(W)\), there are exactly \(q+1\) ideal points of \(W\) on \(L\).
We do this constructively and start with all lines through \(L_1\cap L_2 =: x\). 
Let \(M_{(1)}\) be such a line. By Lemma \ref{lem:LandHep}, \(M_{(1)}\) is contained in a heptagon in \(W\).
By Corollary \ref{cor:Hepq+1}, the line \(M_{(1)}\) has exactly \(q+1\) ideal points in \(W\). 

Next we go to all lines of \(\Lt_W\) at distance \(3\) from \(x\). Let \(M_{(3)}\) be such a line and let \(y\) be the unique point on \(M_{(3)}\) at distance 2 from \(x\). As a line through \(x\), the line \(xy\) has already been proven to be in a heptagon \(\mathscr{H}'\). Hence, we can again use Lemma \ref{lem:LandHep} and Corollary \ref{cor:Hepq+1}.

Finally, all lines at distance \(5\) from \(x\) in \(W\) again follow the same pattern since we have now also shown that all ideal points at distance \(4\) from \(x\) are on a heptagon.

We now count the lines in \(\Lt(W)\). There are \(q+1\) lines through \(x\) which each contain \(q\) ideal point of \(W\) at distance \(2\) of \(x\). Hence there are \((q+1)q^2\) lines of \(\Lt_3(x)\) in \(W\) and \((q+1)q^3\) points of \(\Pt_4(x)\) which are ideal in \(W\). By going one step further we get \((q+1)q^4\) lines of \(\Lt_5(x)\) in \(W\). By Lemma \ref{lem:pointTangent}(\ref{lem:pointTangent:lines}), these must be all lines of \(\Lt(W)\) and therefore:
\[
|\Lt(V)| = (q+1)q^4 + (q+1)q^2 + (q+1) = (q+1)(q^4+q^2+1).
\]
Since \(\Lt\) contains no \(k\)-gons for \(k\in\{2,3,4,5\}\) we inherit this property in \(\Lt(W)\) and hence we can easily see that \(\Lt(W)\) (and the ideal points of \(W\)) has the structure of a generalised hexagon \(\Gamma\) of order \((q,q)\).

Since the embedding of \(\T\) is flat in \(\PG\) (see Lemma \ref{lem:Flat}), the embedding of \(\Gamma\) is clearly flat in \(W\). The embedding is also lax (the points of the embedded generalised hexagon on a line are collinear in \(W\)).
We now show that embedding is polarised (for any point \(z\) of \(\Gamma\) the set of points not opposite \(z\) in \(\Gamma\) does not span \(W\)).
By the definition \(\Gamma\), the point \(z\) is also a point of \(\T\) and by Lemma \ref{lem:pointTangent}(\ref{lem:pointTangent:points}), the set of all points not opposite \(z\) in \(\T\) are in the hyperplane \(z^\polarity\) of \(\PG\). Since \(W\) contains a heptagon and \(z^\polarity\) does not, \(W\neq z^\polarity\) and \(W\cap z^\polarity\) has dimension at most 5. Hence all the points of \(\Gamma\) which are not opposite \(z\), which are contained in \(W\cap z^\polarity\), cannot span \(W\).
The embedding is clearly polarised.

Theorem \ref{thm:KeyTheorem}(1) implies that \(\Gamma\) is naturally embedded in a subspace \(\mathsf{PG}(6,q)\) of \(W\) and by Theorem \ref{thm:KeyTheorem}(2), \(\Gamma\) must be a split Cayley hexagon.
\end{proof}

We will later see that the requirement for the following statement is always satisfied, see Corollary \ref{cor:6supported}.
\begin{lemma}\label{lem:6supportedInt} 
Let \(W\) be a \(6\)-dimensional \(\Lt\)-supported subspace of \(\PG\) such that

\(W\) contains two intersecting lines of \(\Lt\).

Then, one of the following is true:
\begin{itemize}
	\item \(|\Lt(W)| = q^5+q^4+q+1\).
	More specifically, there is a point \(x\) in \(W\) such that \(\Lt(W) = \Lt_{\le 3}(x)\).
	The ideal points of \(W\) are precisely the points \(\Pt_{\le 2}(x)\).
	\item \(|\Lt(W)| = q^5+q^4+q^3+q^2+q+1\).
	More specifically, all lines of \(\Lt(W)\) (and the ideal points of \(W\)) form a split Cayley subhexagon. This split Cayley hexagon is regularly embedded in a subspace \(\mathsf{PG}(6,q)\) of \(W\). 
\end{itemize}
In either case, \(W\) does not contain any isolated points.
\end{lemma}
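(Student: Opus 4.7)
The plan is to mirror the proof of Lemma \ref{lem:5supportedInt}, with Lemmas \ref{lem:LandHep} and \ref{lem:hepToSC} playing the role that Lemmas \ref{lem:LandHex} and \ref{lem:hexToSub} played there. The decisive dichotomy is whether or not \(W\) contains a heptagon of \(\T\). In the positive case, Lemma \ref{lem:hepToSC} immediately yields the second outcome: \(W\) intersects \(\T\) in a split Cayley subhexagon \(\SCHex\) regularly embedded in a subspace \(\mathsf{PG}(6,q)\) of \(W\), and counting the lines of the naturally embedded \(\SCHex\) gives \((q+1)(q^4+q^2+1)\) lines. Since \(\SCHex\) is an ideal subhexagon of \(\T\), every point of \(\Pt(W)\) on a line of \(\SCHex\) carries all of its \(q+1\) \(\T\)-lines inside \(W\), which excludes isolated points at subhexagon points; stray points of \(\Pt(W)\) can be ruled out using Lemma \ref{lem:pointTangent}(\ref{lem:pointTangent:noIsoSub}) together with the coverage of \(\Pt_{\le 2}(x)\) for \(x\) in the subhexagon.

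Suppose from now on that \(W\) contains no heptagon. Using Lemma \ref{lem:SupSubspace} I would write \(W=\langle U,L\rangle\) with \(U\) an \(\Lt\)-supported subspace of dimension \(4\) or \(5\), and examine any \(\Lt\)-supported \(5\)-subspace \(V\subseteq W\) via Lemma \ref{lem:5supportedInt}. The subhexagon-of-order-\((1,q)\) case for \(V\) is excluded: such a \(V\) already contains a hexagon of \(\T\), and any further line of \(\Lt(W)\setminus\Lt(V)\) (which must exist by \(\Lt\)-supportedness) combines with that hexagon to close a heptagon in \(W\), essentially via the same path-closing argument as in Lemma \ref{lem:LandHex}. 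By assumption, the two intersecting lines of \(\Lt(W)\) concur at some point \(x\in\Pt\), and property (Pl) together with flatness places all \(q+1\) lines of \(\Lt\) through \(x\) inside \(W\).

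The main step is to show \(\Lt(W)\subseteq\Lt_{\le 3}(x)\). Suppose, for a contradiction, that some \(M\in\Lt(W)\) is opposite \(x\), so that \(\delta(x,M)=5\). Corollary \ref{cor:UniqueClosest} provides the unique point \(y\in M\) with \(\delta(x,y)=4\) and the common neighbour \(z:=x\bowtie y\); property (Pl) forces \(xz\) and \(yz\) to lie in \(W\). Together with \(M\) and a suitably chosen line of \(\Lt\) through \(x\), I would close up a \(7\)-cycle by mimicking the case analysis of Lemma \ref{lem:LandHep}, producing a heptagon in \(W\) and contradicting the standing assumption. Therefore \(\Lt(W)\subseteq\Lt_{\le 3}(x)\). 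Since \(\Lt_{\le 3}(x)\) spans the hyperplane \(x^\polarity\) and both \(W\) and \(x^\polarity\) are \(6\)-dimensional, the \(\Lt\)-supportedness of \(W\) forces \(W=x^\polarity\) and \(\Lt(W)=\Lt_{\le 3}(x)\). The line count is \((q+1)(q^4+1)=q^5+q^4+q+1\), and Lemma \ref{lem:pointTangent}(\ref{lem:pointTangent:noIso}) rules out isolated points.

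The hard part will be the heptagon-forcing construction in the third step. Every line and point used when closing the \(7\)-cycle must verifiably lie in \(W\); this is exactly where the \(\Lt\)-supported hypothesis, flatness, property (Pl) and Lemma \ref{lem:pointTangent}(\ref{lem:pointTangent:noIsoSub}) come together, but controlling the geometry of \(z\) relative to the \(q+1\) lines through \(x\) still requires the kind of multi-case bookkeeping that appeared in Lemma \ref{lem:LandHep}.
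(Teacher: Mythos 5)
Your global plan (dichotomy on whether \(W\) contains a heptagon, Lemma \ref{lem:hepToSC} in the positive case) is close in spirit to the paper, but the decisive step of your no-heptagon branch is false, not merely incomplete. You fix \(x\) as the concurrence point of an \emph{arbitrary} pair of intersecting lines of \(\Lt(W)\) and claim that any \(M\in\Lt(W)\) with \(\delta(x,M)=5\) forces a heptagon inside \(W\). This implication fails: the first outcome of the very lemma you are proving is realised by tangent hyperplanes \(W=x_0^\polarity\), and inside such a \(W\) your hypotheses can hold with no heptagon present. Concretely, take \(x_0\in\Pt\), let \(W=x_0^\polarity\) (a \(6\)-dimensional \(\Lt\)-supported subspace with \(\Lt(W)=\Lt_{\le 3}(x_0)\), by Lemma \ref{lem:pointTangent}), and pick \(x\) collinear with \(x_0\) in \(\T\). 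Then \(x\) is ideal in \(W\), so \(x\) is the intersection of two lines of \(\Lt(W)\); moreover \(\Lt(W)\) contains lines opposite \(x\) (any line at distance \(3\) from \(x_0\) meeting a line through \(x_0\) other than \(x_0x\) is at distance \(5\) from \(x\), by the absence of triangles and quadrangles). Yet \(W\) contains no heptagon --- a fact the paper itself uses in the proof of Lemma \ref{lem:hepToSC}, and which follows because every point at distance \(4\) from \(x_0\) lies on exactly \emph{one} line of \(\Lt_{\le 3}(x_0)\), so every heptagon point would have to lie in \(\Pt_{\le 2}(x_0)\), contradicting that a heptagon contains opposite points. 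In this \(W\) we have \(\Lt(W)=\Lt_{\le 3}(x_0)\not\subseteq\Lt_{\le 3}(x)\), so your ``main step'' is false for the point you chose, and no amount of case analysis can rescue it: closing a \(7\)-cycle needs ideal points away from \(x\), which you do not have (paths emanating from \(x\) dead-end at the distance-\(4\) points above). Mimicking Lemma \ref{lem:LandHep} is not available either, since its proof presupposes an already existing heptagon, via Corollary \ref{cor:Hepq+1}, precisely to supply those extra ideal points.

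The paper avoids this trap by setting up the dichotomy with the quantifiers the other way around: either there \emph{exists} a point \(x\) such that all points of \(W\) on lines of \(\Lt(W)\) lie in \(x^\polarity\) (then \(\Lt\)-supportedness forces \(W=x^\polarity\), the first bullet), or \emph{every} point of \(W\) admits a line of \(\Lt(W)\) at distance \(5\) from it. That universal property is used twice: once at an ideal point \(x_1\) (producing \(L\) and a second ideal point \(x_2\)), and again at a \emph{third} point \(x_3\) of the line \(x_1x_2\) (producing \(M\) and closing a hexagon \(\mathscr{H}\)); only then does \(\Lt\)-supportedness supply a line \(N\not\subseteq\langle\mathscr{H}\rangle\), from which a heptagon is extracted by a further two-case analysis (whose hardest subcase ends in a regulus contradiction rather than a heptagon). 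A single existential statement at a single, arbitrarily chosen point --- which is all your argument uses --- cannot substitute for this. Separately, your isolated-point argument in the heptagon case also misfires: Lemma \ref{lem:pointTangent}(\ref{lem:pointTangent:noIsoSub}) applies only to subspaces of \(x^\polarity\), and \(W\) is contained in no such hyperplane (it contains a heptagon, \(x^\polarity\) does not); the paper uses Corollary \ref{cor:SCHBlocksInTT} for this purpose instead.
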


\begin{proof}
We write \(\Lt_W\) as an abbreviation of the set of lines \(\Lt(W)\). 
Let \(x\) be a point of \(W\). If all points of \(W\) on lines of \(\Lt_W\) are contained in \(x^\polarity\), then \(W = x^\polarity\) and we find \(q^5+q^4+q+1\) lines of \(\Lt\) in \(W\) and no isolated points (see Lemma \ref{lem:pointTangent}).
Hence, we may assume that for any point \(x\) in \(W\) there exists another point \(x'\) in \(W\) on a line of \(\Lt_W\) such that \(x\) and \(x'\) are opposite. Consequently for every point in \(W\) there is a line of \(\Lt(W)\) at distance 5 from it. 

We want to use this observation to construct a heptagon in \(W\) and apply Lemma \ref{lem:hepToSC}. To accomplish this goal, we first construct a hexagon as an intermediary step.

The assumption of this lemma implies that \(\Lt_W\) contains two lines intersecting in a point \(x_1\). Since the embedding is flat, this is equivalent with \(x_1\) being an ideal point of \(W\). Let \(L\) be a line of \(\Lt_W\) at distance 5 from \(x_1\). All lines of \(\Lt\) through \(x_1\) are contained in \(\Lt_W\). In particular, the unique point \(x_2\) collinear with \(x_1\) and at distance 3 from \(L\) (Corollary \ref{cor:UniqueClosest}) is contained in \(W\). The unique line through \(x_2\) intersecting \(L\) contains two points of \(W\) and is therefore also contained in \(W\). It follows that all lines of \(\Lt\) through \(x_2\) are contained in \(\Lt_W\). That is, \(x_2\) is an ideal point of \(W\). Observe that also the unique point, which we will call \(x_3\), on \(L\) collinear to \(x_2\) is ideal in \(W\).
Again, there exists a line \(M\) at distance 5 from \(x_2\) in \(\Lt_W\).

By Corollary \ref{cor:UniqueClosest}, both \(x_1\) and \(x_3\) determine points \(x_1'\) and \(x_3'\) on \(M\) at closest distance. If \(\delta(x_1,x_1')=2\), then \(\delta(x_3,x_3')\neq 2\), otherwise we would find a pentagon in \(\T\). Hence, at least one of the ideal points $x_1,x_3$ is at distance $5$ from \(M\) and we may assume that without loss of generality \(\delta(x_1,x_1') = 4\). 
Let \(c_1 := x_1\bowtie x_1'\) and \(c_2 := x_2\bowtie x_2'\). Since \(x_1c_1\) and \(x_2c_2\) are in \(\Lt_W\) we have constructed a hexagon \(\mathscr{H} = \{x_1x_2,x_2c_2,c_2x_2',M,x_1'c_1,c_1x_1\}\) in \(\Lt_W\).

The subspace \(\langle \mathscr{H} \rangle\) is 5-dimensional as it is spanned by two planes, \(\langle x_1,x_2,c_2\rangle\) and \(\langle x_1',x_2',c_1 \rangle\), and no 4-dimensional spaces contain hexagons (Corollary \ref{cor:noHexIn4}).
As \(W\) is \(\Lt\)-supported, there exists a line \(N\) of \(\Lt_W\) not in \(\langle \mathscr{H} \rangle\). All lines intersecting two lines of the hexagon \(\mathscr{H}\) are contained in \(\langle \mathscr{H} \rangle\) so considering symmetry we may split into two cases:

Case 1: \(N\) intersects \(x_1x_2\) in a point \(y\) different from \(x_1\) and \(x_2\).

Case 2: \(N\) does not intersect any line of the hexagon.

We now prove that in both cases, \(W\) contains a heptagon. Lemma \ref{lem:hepToSC} then shows that \(W\) intersects \(\T\) in a split Cayley hexagon and Corollary \ref{cor:SCHBlocksInTT} implies that there can be no isolated points. 

\paragraph{Case 1: \(N\) intersects \(x_1x_2\) in a point \(y\) different from \(x_1\) and \(x_2\).}
Let \(y'\) be one of the points collinear with \(x_1'\), at distance \(4\) of \(y\) and not on \(c_1x_1'\) or \(x_1'x_2'\). Clearly \(y'\) is contained in \(W\) since all lines of \(\Lt\) through \(x_1'\) are contained in \(W\). 
Similarly, \(c_3 := y'\bowtie y\) is contained in \(W\) since \(N\) and \(x_1x_2\) through \(y\) are contained in \(W\).
We have constructed the following heptagon in \(W\):
\[\{x_1x_2,x_2c_2,c_2x_2',x_2'x_1',x_1'y',y'c_3,c_3y\}.\]

\begin{figure}[h]
	\begin{center}
		\resizebox{0.3\textwidth}{!}{
			\begin{tikzpicture}[outer sep=3, inner sep=3]
				\foreach \P in {
					(0:3) coordinate (p1) node[anchor=west]{\(c_1\)},
					(1*60:3) coordinate (p2) node[anchor=south]{\(x_1\)}, 
					(2*60:3) coordinate (p3) node[anchor=south]{\(x_2\)}, 
					(3*60:3) coordinate (p4) node[anchor=east]{\(c_2\)}, 
					(4*60:3) coordinate (p5) node[anchor=north]{\(x_2'\)}, 
					(5*60:3) coordinate (p6) node[anchor=north]{\(x_1'\)}, 
					($(p2)!0.5!(p3)$) coordinate (y) node[anchor=south]{\(y\)},
					(5*60:1) coordinate (yp) node[anchor=south east]{\(y'\)},
					(1,1) coordinate (c3) node[anchor=west]{\(c_3\)}
				}
				\filldraw \P circle (3pt);
				
				\draw[thick]
				(p1) -- (p2) -- (p3) -- (p4) -- (p5) -- (p6) -- cycle
				;
				\draw[thick]
				(p6) -- (yp) -- (c3) -- (y) --node[anchor=east]{\(N\)} (-1,0);
				;
				
			\end{tikzpicture}
		}
	\end{center}
	\vspace{-1em}
	\caption{Case 1.}	
\end{figure}

\paragraph{Case 2: \(N\) does not intersect any line of the hexagon.}
Without loss of generality, we may assume that \(c_1\) and \(c_2\) are at distance 5 from \(N\) since at most two of the points \(x_1,x_2,c_1,c_2,x_1'\) and \(x_2'\) can be at distance 3 from \(N\). 

By Corollary \ref{cor:UniqueClosest}, both \(c_1\) and \(c_2\) determine points \(y_1\) and \(y_2\) respectively, on \(N\) such that \(\delta(c_1,y_1) = 4 = \delta(c_2,y_2)\).

Let \(z_1 := x_1\bowtie y_1\) and \(z_2 := x_2\bowtie c_2\). Clearly \(z_1\) and \(z_2\) are contained in \(W\) by property (Pl) (Lemma \ref{lem:pl}). Now consider the unique point \(z_1'\) on \(c_1z_1\) at distance 4 from \(c_2\). We let \(v\) be the unique point collinear with \(c_2\) and \(z_1'\). Property (Pl) (Lemma \ref{lem:pl}) again implies that \(v\) is a point of \(W\).
If \(z_1'\neq z_1\) then we have constructed the following heptagon in \(W\) (and we are done):
\[\{c_2v, vz_1',z_1'z_1, z_1y_1,N,y_2z_2,z_2c_2\}.\]

\begin{figure}[h]
	\begin{center}
		\resizebox{0.3\textwidth}{!}{
			\begin{tikzpicture}[outer sep=3, inner sep=3]
				\foreach \P in {
					(0:3) coordinate (p1) node[anchor=west]{\(c_1\)},
					(1*60:3) coordinate (p2) node[anchor=south]{\(x_1\)}, 
					(2*60:3) coordinate (p3) node[anchor=south]{\(x_2\)}, 
					(3*60:3) coordinate (p4) node[anchor=east]{\(c_2\)}, 
					(4*60:3) coordinate (p5) node[anchor=north]{\(x_2'\)}, 
					(5*60:3) coordinate (p6) node[anchor=north]{\(x_1'\)}, 
					(1,0) coordinate (y1) node[anchor=west]{\(y_1\)},
					(-1,0) coordinate (y2) node[anchor=east]{\(y_2\)},
					(1,1.8) coordinate (z1) node[anchor=south]{\(z_1\)},
					(-1,-1.8) coordinate (z2) node[anchor=north]{\(z_2\)},
					(2,0.9) coordinate (z1p) node[anchor=north]{\(z_1'\)},
					(-2,0.9) coordinate (v) node[anchor=south]{\(v\)}
				}	
				\filldraw \P circle (3pt);
				
				\draw[thick]
				(p1) -- (p2) -- (p3) -- (p4) -- (p5) -- (p6) -- cycle
				;
				\draw[thick]
				(p1) -- (z1) -- (y1) -- node[anchor=south]{\(N\)} (y2) -- (z2) -- (p4)
				;
				\draw[thick]
				(z1p) -- (v) -- (p4)
				;
				
			\end{tikzpicture}
		}
	\end{center}
	\vspace{-1em}
	\caption{Case 2 if \(z_1'\neq z_1\).}	
\end{figure}
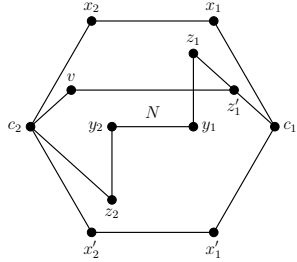

We may therefore assume that \(z_1'=z_1\).
By symmetry, we may also assume that \(z_2\) is the unique point on \(c_2z_2\) at distance 4 from \(c_1\) since otherwise we find a similar heptagon through \(c_1\) and \(z_2\).

Let \(v'\) be the unique point collinear with both \(c_1\) and \(z_2\). Both the lines \(c_2v\) and \(c_1v'\) are in \(\langle\mathscr{H} \rangle\) and belong to \(T(z_1,z_2)\). Hence, all lines of \(T(z_1,z_2) = R(c_2v,c_1v')\) are in \(\langle\mathscr{H} \rangle\) (Lemma \ref{lem:3supported}). Since \(N\) also belongs to \(T(z_1,z_2)\) this contradicts the assumption that \(N\) was not in \(\langle\mathscr{H} \rangle\).

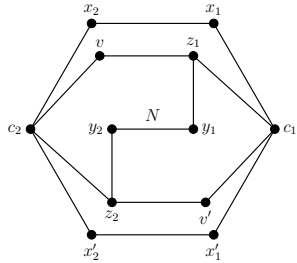
\begin{figure}[h]
	\begin{center}
		\resizebox{0.3\textwidth}{!}{
			\begin{tikzpicture}[outer sep=3, inner sep=3]
				\foreach \P in {
					(0:3) coordinate (p1) node[anchor=west]{\(c_1\)},
					(1*60:3) coordinate (p2) node[anchor=south]{\(x_1\)}, 
					(2*60:3) coordinate (p3) node[anchor=south]{\(x_2\)}, 
					(3*60:3) coordinate (p4) node[anchor=east]{\(c_2\)}, 
					(4*60:3) coordinate (p5) node[anchor=north]{\(x_2'\)}, 
					(5*60:3) coordinate (p6) node[anchor=north]{\(x_1'\)}, 
					(1,0) coordinate (y1) node[anchor=west]{\(y_1\)},
					(-1,0) coordinate (y2) node[anchor=east]{\(y_2\)},
					(1,1.8) coordinate (z1) node[anchor=south]{\(z_1\)},
					(-1,-1.8) coordinate (z2) node[anchor=north]{\(z_2\)},
					(-1.3,1.8) coordinate (v) node[anchor=south]{\(v\)},
					(1.3,-1.8) coordinate (vp) node[anchor=north]{\(v'\)}
				}	
				\filldraw \P circle (3pt);
				
				\draw[thick]
				(p1) -- (p2) -- (p3) -- (p4) -- (p5) -- (p6) -- cycle
				;
				\draw[thick]
				(p1) -- (z1) -- (y1) -- node[anchor=south]{\(N\)} (y2) -- (z2) -- (p4)
				;
				\draw[thick]
				(z1) -- (v) -- (p4)
				(z2) -- (vp) -- (p1)
				;
				
			\end{tikzpicture}
		}
	\end{center}
	\vspace{-1em}
	\caption{Case 2 if \(z_1' = z_1\) and \(\delta(z_2,c_1) = 4\).}	
\end{figure}

\end{proof}

\begin{corollary}\label{cor:6HexIsSC}
Let \(W\) be a \(6\)-dimensional \(\Lt\)-supported subspace of \(\PG\) which contains a hexagon.
Then, \(W\) intersects \(\T\) in a split Cayley subhexagon.
\end{corollary}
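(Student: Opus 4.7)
My plan is to appeal to Lemma \ref{lem:6supportedInt} and rule out its first case. Since consecutive sides of the hexagon \(\mathscr{H}\) meet at its vertices, \(W\) contains two intersecting lines of \(\Lt\), so that lemma applies: we are either in the first case, with \(\Lt(W) = \Lt_{\le 3}(x)\) for some point \(x\in\Pt\) and ideal points of \(W\) equal to \(\Pt_{\le 2}(x)\), or in the second case, in which \(\Lt(W)\) together with the ideal points of \(W\) already forms a regularly embedded split Cayley subhexagon. Because the second case is precisely the conclusion of the corollary, the whole task reduces to excluding the first case.

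To exclude it, I would pass to the \(5\)-dimensional span \(V := \langle \mathscr{H}\rangle\). This span is indeed \(5\)-dimensional: it is covered by two planes through opposite vertices of \(\mathscr{H}\), and Corollary \ref{cor:noHexIn4} prohibits dimension \(4\). By Lemma \ref{lem:hexToSub}, \(V\) intersects \(\T\) in a subhexagon \(\Gamma'\) of order \((1,q)\). By Remark \ref{rem:IdealAndFull}, \(\Gamma'\) is an ideal subhexagon of \(\T\), so every point of \(\Gamma'\) carries its full pencil of \(q+1\) lines of \(\T\) inside \(\Gamma' \subseteq V \subseteq W\). In particular, the six vertices of \(\mathscr{H}\), which are points of \(\Gamma'\), are all ideal points of \(W\); under the first-case hypothesis they must therefore lie in \(\Pt_{\le 2}(x)\).

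The contradiction then comes from a short distance argument. Any two opposite vertices of \(\mathscr{H}\), such as \(L_1\cap L_2\) and \(L_4\cap L_5\), lie at distance \(6\) in the incidence graph of \(\T\), because the \(12\)-cycle formed by \(\mathscr{H}\) is geodesic by the girth-\(12\) property (Corollary \ref{cor:incidenceGraph}). On the other hand, any two points of \(\Pt_{\le 2}(x)\) are at distance at most \(2+2 = 4\) from each other by the triangle inequality. This contradicts the previous paragraph and rules out the first case of Lemma \ref{lem:6supportedInt}, leaving the second case, which is the desired conclusion.

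I expect the main conceptual step to be the two-level use of ideality: the vertices of \(\mathscr{H}\) are not obviously ideal in \(W\) just from being incident with two lines of \(\Lt(W)\), and one must first recognise them as points of the ideal subhexagon \(\Gamma' \subseteq V\) before lifting ``ideal in \(\Gamma'\)'' to ``ideal in \(W\)'' along the inclusions \(\Gamma' \subseteq V \subseteq W\). Once this is in place, the girth-\(12\) distance contradiction is immediate, and no additional casework is needed.
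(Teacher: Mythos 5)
Your proof is correct and matches the paper's approach: the corollary appears in the paper as an immediate consequence of Lemma \ref{lem:6supportedInt} (the hexagon supplies the required pair of intersecting lines, and only the second case of that lemma can occur), which is exactly your reduction. Your explicit exclusion of the first case --- promoting the hexagon's vertices to ideal points of \(W\) via Lemma \ref{lem:hexToSub} and the ideality of the order-\((1,q)\) subhexagon, then noting that ideal points would have to lie in \(\Pt_{\le 2}(x)\), where no two points can be opposite --- is a valid filling-in of the step the paper leaves implicit.
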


We now investigate the \(\Lt\)-supported subspaces containing only disjoint lines.
\begin{lemma}\label{lem:HermInSCH}
Let \(U\) be a 5-dimensional \(\Lt\)-supported subspace of \(\PG\) with the property that no two lines of \(\Lt(U)\) intersect. Then, there exists a 6-dimensional subspace \(V\) of \(\PG\) such that \(U\) is contained in \(V\) and \(V\) intersects \(\T\) in a split Cayley hexagon.
\end{lemma}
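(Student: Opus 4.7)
The plan is to build a $6$-dimensional subspace $V\supseteq U$ that contains a heptagon of $\T$; Lemma \ref{lem:hepToSC} then immediately yields the desired split Cayley hexagon. The first step is to observe that any two lines of $\Lt(U)$ are opposite in $\T$: if $L,M\in\Lt(U)$ are disjoint, then $\langle L,M\rangle$ is an $\Lt$-supported solid containing no intersecting lines of $\Lt$, so Case~1 of Lemma \ref{lem:3supported} is excluded and $L,M$ are opposite with $R(L,M)\subseteq\Lt(U)$.

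Next, fix opposite lines $L,M\in\Lt(U)$, set $\Sigma=\langle L,M\rangle$, and take a line $N\in\Lt(U)$ not in $\Sigma$ (which exists because $U$ is $\Lt$-supported and $\dim U=5>\dim\Sigma$). By the previous step $N$ is opposite every line of $R(L,M)$, so Lemma \ref{lem:LineOppositeRegulus} gives $\delta(x,N)=5$ for every $x\in T(L,M)$. Choose such an $x$; by Corollary \ref{cor:UniqueClosest} there is a unique point $y\in N$ with $\delta(x,y)=4$, and set $c=x\bowtie y$. Since the line $cy$ meets $N$ at $y$ but is not $N$ itself, $cy$ cannot belong to $\Lt(U)$ (all lines of $\Lt(U)$ are pairwise disjoint), so $cy\not\subseteq U$ and hence $c\notin U$. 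The space $V:=\langle U,c\rangle$ therefore has projective dimension $6$.

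The crux is then exhibiting a heptagon of $\T$ inside $V$. Picking a second trace point $x'\in T(L,M)$ and repeating the construction gives $y'\in N$ and $c'=x'\bowtie y'$ together with new lines $x'c',c'y'\in\Lt$. Using property (Pl) and flatness (Lemma \ref{lem:Flat}) one checks that the intermediate points of the path stay inside $V$, and then concatenates: the line through $x$ meeting $L$, the line $xc$, the line $cy$, a stretch of $N$ from $y$ to $y'$, the line $y'c'$, the line $c'x'$, and finally a line through $x'$ meeting $L$ at a point of $L$ distinct from the first contact with $L$ --- a distinctness that can be arranged by varying $x'$ inside the $(q^3+1)$-point trace $T(L,M)$. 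The main obstacle is verifying that $c'$ (and therefore the entire heptagon) actually lies in $V$: a priori $\langle U,c,c'\rangle$ could jump to dimension $7$, and ruling this out will most likely require a coordinate or triality argument in the spirit of Lemma \ref{lem:ThreeLinesIntersectingACommonLine}, exploiting the rigidity imposed by the pairwise-opposite structure of $\Lt(U)$. Once such a heptagon is secured inside $V$, Lemma \ref{lem:hepToSC} closes the argument and provides the regular embedding of the resulting split Cayley hexagon in a $\mathsf{PG}(6,q)$-subspace of $V$.
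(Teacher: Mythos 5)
Your opening steps are sound and agree with the paper's (pairwise-opposite lines, \(R(L,M)\subseteq\Lt(U)\) via Lemma \ref{lem:3supported}, distance \(5\) via Lemma \ref{lem:LineOppositeRegulus}), but the proof stops exactly where the real difficulty of the lemma lies: you define \(V=\langle U,c\rangle\) and then concede that you cannot show \(c'\), and hence your polygon, lies in \(V\), deferring to an unspecified ``coordinate or triality argument''. That missing containment is the entire content of the lemma, so the proposal is incomplete. There are two further concrete defects. First, the configuration you describe does not close into a heptagon: distinct points \(x,x'\) of \(T(L,M)\) are mutually opposite (Corollary \ref{cor:TraceOpposite}), so their contact points on \(L\) are necessarily distinct, and your seven listed lines only form a closed circuit after adjoining \(L\) itself --- an octagon, to which Lemma \ref{lem:hepToSC} does not apply. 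Second, the trace points themselves lie outside \(U\) (if \(x\in U\) then \(xx_L\subseteq U\) would be a line of \(\Lt(U)\) meeting \(L\)), so even \(x\in V\) and \(x'\in V\) require proof; the first can be salvaged by flatness once \(c\in V\) is known (the plane \(\langle x_L,x_M,c\rangle\) is the flatness plane of \(x\) and lies in \(V\)), but the argument for \(x'\) again hinges on the unproven \(c'\in V\).

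The paper sidesteps all of this by never aiming at a heptagon. It spans \(U\) by three mutually opposite lines \(L_1,L_2,L_3\), sets \(y=x_1\bowtie x_2\) with \(x_i\in L_i\), gets \(\delta(y,L_3)=5\) from Lemma \ref{lem:LineOppositeRegulus}, and lets \(M\) be the line through \(y\) on the shortest path to \(L_3\). The \(5\)-space \(W=\langle L_1,M,L_3\rangle\) then contains a path of five lines \emph{by construction} --- every line of the path is spanned by points of \(L_1\), \(M\) and \(L_3\), so no containment needs a separate argument --- and Corollary \ref{cor:pathIn5IsSub} gives a subhexagon of order \((1,q)\) in \(W\). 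Finally \(V=\langle W,L_2\rangle\) is a \(6\)-dimensional \(\Lt\)-supported subspace containing \(U\) and a hexagon, and Corollary \ref{cor:6HexIsSC} (whose proof, via Lemma \ref{lem:6supportedInt}, already performs the hexagon-to-heptagon upgrade inside the \(6\)-space) concludes. If you want to repair your argument, the cleanest fix is precisely this change of target: choose \(V\) as a span of lines of \(\Lt\) so that the needed incidences live in \(V\) automatically, and produce a hexagon rather than a heptagon.
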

\begin{proof}
By Lemma \ref{lem:SupSubspace} and Lemma \ref{lem:3supported}, \(U\) contains three mutually opposite lines \(L_1,L_2\) and \(L_3\) such that \(\langle L_1,L_2,L_3 \rangle = U\). 
Let \(x_1\) be a point on \(L_1\) and let \(x_2\) be the unique point on \(L_2\) at distance \(4\) from \(x\).

Let \(y = x_1\bowtie x_2\). Clearly \(R(L_1,L_2)\) is contained in \(U\) and hence we know that \(L_3\) is opposite all lines of \(R(L_1,L_2)\). Lemma \ref{lem:LineOppositeRegulus} implies that \(L_3\) is at distance \(5\) from \(y\).
Let \(M\) be the unique line through \(y\) at distance 4 from \(L_3\) (Corollary \ref{cor:UniqueClosest}).

The subspace \(\langle L_1, M, L_3 \rangle =: W\) is clearly 5-dimensional and, since it contains a path of 5 lines, must intersect \(\T\) in a subhexagon \(\mathcal{H}\) of order \((1,q)\) (Corollary \ref{cor:pathIn5IsSub}).

The line \(L_2\) cannot be contained in \(W\) since otherwise \(U\) would equal \(W\) and contain intersecting lines of \(\Lt\). We do know that \(L_2\) intersects the line \(yx_2\) which, as a line through \(y\), is contained in \(W\) (property (Pl)). Therefore \(\langle W, L_2 \rangle =: V\) is a 6-dimensional \(\Lt\)-supported subspace. Since \(W\) contains the subhexagon \(\mathcal{H}\), Corollary \ref{cor:6HexIsSC} implies that \(V\) intersects \(\T\) in a split Cayley Hexagon.
\end{proof}

In any split Cayley hexagon one can construct what is called a \emph{hermitian spread}, which is a set of \(q^3+1\) mutually opposite lines, for more details see \cite{Thas1980}.

Thas and Van Maldeghem mention in \cite{ThasVanMaldeghem2008} that there are three possibilities for the intersection of a 5-dimensional subspace and a regularly embedded split Cayley hexagon. 
In particular, if \(|\Lt(W)| = q^3+1\) for some 5-dimensional subspace \(W\) of \(\mathsf{PG}(6,q)\), than this intersection is precisely a hermitian spread.
In this same paper, the authors also prove explicitly (Lemma 2) that the required properties which can be carried over imply the following:
Every 5-dimensional subspace of \(\mathsf{PG}(6,q)\) contains either \(q^3+1\), \(q^3+q^2+q+1\) or \((q+1)(q^2+q+1)\) lines of a regularly embedded split Cayley hexagon.

\begin{corollary}\label{cor:Opp5Max}
Let \(V\) be a 5-dimensional \(\Lt\)-supported subspace of \(\PG\). If no two lines of \(\Lt\) intersect then \(|\Lt(V)| = q^3+1\), all lines of \(\Lt(V)\) are opposite in \(T\) and these lines form a hermitian spread in a split Cayley subhexagon.
\end{corollary}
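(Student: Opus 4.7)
The plan is to leverage Lemma \ref{lem:HermInSCH}, which places \(V\) inside a \(6\)-dimensional subspace \(W\) that intersects \(\T\) in a split Cayley subhexagon \(\SCH\), and then invoke the characterisation (quoted just above the corollary from \cite{ThasVanMaldeghem2008}) of the possible sizes of \(\Lt(V)\) when \(V\) is a \(5\)-space meeting a regularly embedded split Cayley hexagon.

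First I would observe that since \(V \subseteq W\) and \(W \cap \T\) is exactly the split Cayley subhexagon \(\SCH\), every line of \(\Lt(V)\) is a line of \(\SCH\). The cited result then gives \(|\Lt(V)|\in\{q^3+1,\ q^3+q^2+q+1,\ (q+1)(q^2+q+1)\}\). I would rule out the last two values using the hypothesis that no two lines of \(\Lt(V)\) intersect. The value \((q+1)(q^2+q+1)\) corresponds to \(V\) intersecting \(\SCH\) in a subhexagon of order \((1,q)\), which manifestly contains pairs of intersecting lines. The value \(q^3+q^2+q+1\) corresponds to the configuration described in the first case of Lemma \ref{lem:5supportedInt}: a concurrent pencil of \(q+1\) lines of \(\Lt\) through a common point together with their distance-\(3\) extensions, which again contains intersecting lines. (Alternatively, since \(q^3+q^2+q+1>q^3+1\) exceeds the maximum size \(st^2+1=q^3+1\) of a set of mutually opposite lines in \(\SCH\) given by Lemma \ref{lem:MaximumSetOfOppositeLines}, such a set cannot be pairwise opposite in \(\SCH\), and a pair that is not opposite in a generalised hexagon of order \((q,q)\) shares either a point or a common neighbour, with the ``common neighbour'' case forcing two of the \(L_i\) to lie in a plane and hence to meet.) Either way, only \(|\Lt(V)|=q^3+1\) is consistent with the disjointness hypothesis, and the cited Thas--Van Maldeghem result then identifies \(\Lt(V)\) with a hermitian spread of \(\SCH\).

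Finally, to conclude that the lines of \(\Lt(V)\) are mutually opposite in \(\T\) (and not merely in \(\SCH\)), I would apply Corollary \ref{cor:DistanceInSub}: if two lines \(L,M\in\Lt(V)\) were not opposite in \(\T\), then their unique shortest path in \(\T\) would already lie in the subhexagon \(\SCH\), contradicting the fact that \(L\) and \(M\) are opposite in \(\SCH\) as members of a hermitian spread.

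The only real obstacle is eliminating the intermediate value \(q^3+q^2+q+1\); but this is forced either by the explicit geometric description of this case already recorded in Lemma \ref{lem:5supportedInt} (where a pencil of concurrent lines through the distinguished point \(x\) appears) or by the opposite-line bound of Lemma \ref{lem:MaximumSetOfOppositeLines} applied inside \(\SCH\). The remaining steps are direct appeals to the quoted results.
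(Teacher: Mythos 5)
Your proposal is correct and follows essentially the same route as the paper: Lemma \ref{lem:HermInSCH} places \(V\) in a \(6\)-space \(W\) meeting \(\T\) in a split Cayley subhexagon, and then the Thas--Van Maldeghem trichotomy together with the no-intersections hypothesis (equivalently, the bound of Lemma \ref{lem:MaximumSetOfOppositeLines}) forces \(|\Lt(V)|=q^3+1\) and the hermitian spread; the paper's proof is exactly this, stated tersely, and your final step lifting oppositeness from the subhexagon to \(\T\) via Corollary \ref{cor:DistanceInSub} is a detail the paper leaves implicit but which is needed and correctly handled.

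One caveat: your parenthetical alternative for eliminating \(q^3+q^2+q+1\) contains a false claim. Two lines of \(\T\) at distance \(4\) (a non-opposite, non-concurrent pair sharing a common transversal) are \emph{not} coplanar in the embedding --- since the embedding is full, coplanar lines of \(\Lt\) would meet in a point of \(\T\), so a distance-\(4\) pair necessarily spans a solid. The correct way to dispose of such a pair is Lemma \ref{lem:3supported}: the solid spanned by two disjoint lines of \(\Lt\) either consists of \(q+1\) mutually opposite lines (a regulus) or contains \(2q+1\) lines of \(\Lt\) among which there are intersecting pairs; a distance-\(4\) pair forces the latter, and those intersecting lines lie in \(V\), contradicting the hypothesis. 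Since this is only offered as an alternative and your primary argument (via the geometric descriptions of the two larger intersection types, both of which contain pencils of concurrent lines) stands, the proof as a whole is sound.
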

\begin{proof}
By Lemma \ref{lem:HermInSCH}, the subspace \(V\) is contained in a subspace \(W\) which intersects \(\T\) in a split Cayley subhexagon. The result now follows from Lemma \ref{lem:MaximumSetOfOppositeLines} and the description in \cite{ThasVanMaldeghem2008}.
\end{proof}

\begin{lemma}\label{lem:5supported}
\textbf{(5d)} Let \(V\) be a \(5\)-dimensional \(\Lt\)-supported subspace of \(\PG\).
Then, one of the following is true:
\begin{itemize}
	\item \(|\Lt(V)| = q^3+1\).
	More specifically, all lines of \(\Lt(V)\) are opposite in \(T\) and form a hermitian spread in a split Cayley subhexagon.
	There are no ideal points in this case.
	
	\item \(|\Lt(V)| = q^3+q^2+q+1\).
	
	More specifically, there is a point \(x\) in \(V\) such that all lines of \(\Lt(V)\) are contained in \(\Lt_{\le 3}(x)\) and each line through \(x\) intersects \(q^2+q\) of the other lines.
	All ideal points of \(V\) are on lines of \(\Lt(V)\) though \(x\) and therefore coplanar.
	Each of the lines through \(x\) contains precisely \(q+1\) ideal points of \(V\).
	
	\item \(|\Lt(V)| = q^3+2q^2+2q+1\).
	
	More specifically, all lines of \(\Lt(V)\) (together with the ideal points of \(V\)), form a subhexagon of order \((1,q)\).
	
	\item \(|\Lt(V)| = q^4+q+1\).
	
	More specifically, there exists a line \(M\) in \(\Lt(V)\) such that \(\Lt(V) = \Lt_{\le 2}(L)\).
	This means there are \(q^3+1\) ideal points in \(V\). They are precisely all the points on \(M\).
\end{itemize}
In any case, there are no isolated points in \(V\).
\end{lemma}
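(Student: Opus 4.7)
The plan is to split on whether $\Lt(V)$ contains two intersecting lines, since this exactly distinguishes the new case (1) from the three cases already handled in Lemma \ref{lem:5supportedInt}. Concretely, if there exist $L,M\in\Lt(V)$ with a common point, I apply Lemma \ref{lem:5supportedInt} word-for-word: it delivers the three possibilities $|\Lt(V)|\in\{q^3+q^2+q+1,\,q^3+2q^2+2q+1,\,q^4+q+1\}$ together with the structural descriptions (dominant point $x$ and coplanar ideal points, a $(1,q)$-subhexagon, or a line $M$ with $\Lt(V)=\Lt_{\le 2}(M)$). Each subcase inside that proof also certifies the absence of isolated points, by placing $V$ inside $M^\polarity$, appealing to Corollary \ref{cor:SCHBlocksInTT} via the $(1,q)$-subhexagon, or placing $V$ inside $L^\polarity$ for a suitable line $L$ and invoking Lemma \ref{lem:lineTangent}(4).

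If instead no two lines of $\Lt(V)$ intersect, Corollary \ref{cor:Opp5Max} fires immediately: $|\Lt(V)|=q^3+1$, the lines are pairwise opposite in $\T$, and together they form a hermitian spread of a split Cayley subhexagon $\SCHex$. This handles all of the counting and structural assertions of the new case (1) in a single step.

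The one remaining obligation, and the main obstacle, is the absence of isolated points in the hermitian-spread case. The plan here is to use Lemma \ref{lem:HermInSCH} to embed $V$ inside a 6-space $W$ meeting $\T$ in a split Cayley subhexagon, and then to show that every point $y\in V\cap\Pt$ already lies on a spread line. The approach is: pick two pairwise opposite spread lines $L,M\in\Lt(V)$; by Lemma \ref{lem:3supported} (case 2), the solid $\langle L,M\rangle\subseteq V$ contains the full regulus $R(L,M)$ and has no isolated points. If I can argue that any given $y\in V\cap\Pt$ is contained in $\langle L_i,L_j\rangle$ for some pair of spread lines $L_i,L_j$, then Remark \ref{rem:IsolatedAndIdealPoints} transports non-isolation from the solid up to $V$, and we are done. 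The technical heart is producing such a pair $(L_i,L_j)$: the idea is to use Corollary \ref{cor:UniqueClosest} to find the (unique) spread line $L$ closest to $y$ inside the ambient split Cayley subhexagon, choose a second spread line $M$ whose trace $T(L,M)$ receives the projection of $y$ along its $(q+1)$ lines of $\Lt$, and then track the line of $\Lt$ through $y$ meeting $R(L,M)\subseteq V$ to conclude $y\in\langle L,M\rangle$.

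The bookkeeping in the intersecting-line branch is essentially a citation exercise, but the no-isolated-points claim in the spread case is the delicate step, because unlike the other cases $V$ is no longer forced to sit inside the polar subspace of a single line or point of $\T$, so the generic mechanism of Lemmas \ref{lem:pointTangent} and \ref{lem:lineTangent} does not apply and one must argue via the local geometry of the regulus inside $V$.
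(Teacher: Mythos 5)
Your decomposition is the same as the paper's: split on whether \(\Lt(V)\) contains two intersecting lines, quote Lemma \ref{lem:5supportedInt} in the first branch and Corollary \ref{cor:Opp5Max} in the second, and observe that only the absence of isolated points remains to be proven. Two of your mechanisms in the intersecting branch need repair, though. For \(|\Lt(V)|=q^3+q^2+q+1\) you propose placing \(V\) inside \(L^\polarity\) and invoking Lemma \ref{lem:lineTangent}(4); this cannot work, since \(\dim L^\polarity=5=\dim V\) would force \(V=L^\polarity\) and hence \(|\Lt(V)|=q^4+q+1\). The correct tool is \(V\subseteq x^\polarity\) together with \(\Pt_{\le 2}(x)\subseteq V\) and Lemma \ref{lem:pointTangent}(4). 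For \(|\Lt(V)|=q^3+2q^2+2q+1\), a blocking statement for the \((1,q)\)-subhexagon against all of \(\Lt\) is not available (the hypothesis \(k^2t=s\) of Lemma \ref{lem:IdealSubhexagonBlocksLines} fails for \(k=1\), \(t=q\), \(s=q^3\)); as in the paper, one must first extend \(V\) to a \(6\)-space \(W\) meeting \(\T\) in a split Cayley hexagon (Corollary \ref{cor:6HexIsSC}), get that \(W\) has no isolated points from Lemma \ref{lem:6supportedInt}, and only then block inside that split Cayley hexagon.

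The genuine gap is in the hermitian-spread case, exactly where you locate the difficulty. Your intermediate claim --- every \(y\in V\cap\Pt\) lies in a solid \(\langle L_i,L_j\rangle\) spanned by two spread lines --- is true only a posteriori, and your construction of the pair cannot establish it for the hypothetical isolated point one must exclude. If \(y\) lies on no spread line, then (inside the ambient split Cayley subhexagon) its nearest spread lines are at distance \(3\), so \(y\) belongs to the trace \(T(L,M)\) of any two such lines \(L,M\); but by Lemma \ref{lem:3supported} every point of \(\Pt\) in \(\langle L,M\rangle\) lies on a line of \(R(L,M)\), whereas trace points are at distance \(3\) from all regulus lines, so \(T(L,M)\cap\langle L,M\rangle=\emptyset\): the membership \(y\in\langle L,M\rangle\) you aim for is false for precisely the points at issue. (Also, ``the unique spread line closest to \(y\)'' is not well defined: there are \(q+1\) spread lines at distance \(3\) from such a \(y\), and Corollary \ref{cor:UniqueClosest} gives a unique closest point on a given line, not a unique closest line of a spread.) Moreover, your construction treats \(y\) as a point of the ambient split Cayley subhexagon, which already presupposes that \(W\) has no isolated points, i.e. Lemma \ref{lem:6supportedInt} --- the very tool your ``local regulus geometry'' was meant to bypass. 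The working argument, which is the paper's, runs: \(W\supseteq V\) from Lemma \ref{lem:HermInSCH} has no isolated points (Lemma \ref{lem:6supportedInt}); hence any \(y\in V\cap\Pt\) lies on some \(N\in\Lt(W)\); by Lemma \ref{lem:SpreadBlocksLines}, \(N\) meets a spread line \(S\subseteq V\); so either \(y\in S\), or \(N\) has two points in \(V\) and hence \(N\subseteq V\), and in both cases \(y\) is not isolated in \(V\). The salvageable core of your regulus idea is exactly this two-points-in-\(V\) step, not membership of \(y\) in a solid.
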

\begin{proof}
In view of Corollary \ref{cor:Opp5Max} and Lemma \ref{lem:5supportedInt}, the only thing left to prove is that in none of the cases the space \(V\) contains isolated points. 
This follows from extending each different case to a 6-dimensional \(\Lt\)-supported subspace containing them. 

We first consider the case where \(|\Lt(V)| = q^3+1\). We have already shown that in this case there exists a \(6\)-dimensional subspace \(W\) which contains \(V\) and intersects \(\T\) in a split Cayley hexagon (see Lemma \ref{lem:HermInSCH}). From Lemma \ref{lem:6supportedInt}, we get that \(W\) does not contain any isolated points. By Lemma \ref{lem:SpreadBlocksLines}, all lines of \(\Lt(W)\) intersect the \(q^3+1\) lines in \(\Lt(V)\). This proves the claim that \(V\) does not contain any isolated points. 

Next, consider the case where \(|\Lt(V)| = q^3+2q^2+2q+1\). If we consider any line \(N\) of \(T\) not in \(V\) which intersects one of the lines of the subhexagon of order \((1,q)\) in a point not of the subhexagon, then \(\langle N, V\rangle =: W\) must be an \(\Lt\)-supported 6-dimensional subspace intersecting \(\T\) in a split Cayley hexagon (Corollary \ref{cor:6HexIsSC}). From Lemma \ref{lem:6supportedInt}, we again get that \(W\) does not contain any isolated points. By Corollary \ref{cor:IdealSubhexagonOfSCH}, all lines of \(\Lt(W)\) intersect the lines in \(\Lt(V)\). This proves the claim that \(V\) does not contain any isolated points.

For the other two cases, \(|\Lt(V)| = q^3+q^2+q+1\) and \(|\Lt(V)| = q^4+q+1\), we can similarly use Lemma \ref{lem:pointTangent}(\ref{lem:pointTangent:noIsoSub}) and \ref{lem:lineTangent}(\ref{lem:lineTangent:noIsoSub}) respectively.
\end{proof}               

\begin{lemma}\label{lem:HermNoOpp}
Let \(V\) be a 5-dimensional \(\Lt\)-supported subspace of \(\PG\) with the property that no two lines of \(\Lt(V)\) intersect. Then, there are no lines in \(\Lt\) opposite all lines of \(\Lt(V)\).
\end{lemma}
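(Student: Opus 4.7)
The plan is to leverage the very strong structural information already in hand for \(5\)-dimensional \(\Lt\)-supported subspaces where no two lines of \(\Lt\) meet. By Corollary \ref{cor:Opp5Max} together with Lemma \ref{lem:HermInSCH}, \(\Lt(V)\) consists of \(q^3+1\) mutually opposite lines forming a hermitian spread \(\mathcal{S}\) inside a split Cayley subhexagon \(\SCHex\) of \(\T\). Any argument about lines of \(\Lt\) opposite every element of \(\Lt(V)\) should therefore be reducible to an incidence argument inside \(\SCHex\).

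I would proceed by contradiction: suppose \(N \in \Lt\) is opposite every line of \(\mathcal{S} = \Lt(V)\). By Corollary \ref{cor:SCHBlocksInTT}, the line \(N\) meets \(\SCHex\); pick a line \(M\) of \(\SCHex\) and a point \(y\) with \(y = N \cap M\). If \(M\) itself belonged to \(\mathcal{S}\), then \(N\) would meet an element of \(\mathcal{S}\), contradicting oppositeness; so \(M \notin \mathcal{S}\). Lemma \ref{lem:SpreadBlocksLines} then supplies a line \(L \in \mathcal{S}\) meeting \(M\), say at a point \(x\); this \(L\) is forced to be unique, because if two distinct lines of \(\mathcal{S}\) both met \(M\) they would be at distance at most \(4\) in \(\T\), contradicting their mutual oppositeness.

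The finishing step is a short distance computation. If \(x = y\) then \(L\) and \(N\) share the point \(y\), so \(\delta(L,N) \le 2\); if \(x \ne y\) then \(L-x-M-y-N\) is a path of length \(4\) in the incidence graph, so \(\delta(L,N) \le 4\). Either way \(L\) and \(N\) fail to be opposite, contradicting the choice of \(N\). I do not anticipate a genuine obstacle here: once the hermitian spread structure of \(\Lt(V)\) inside \(\SCHex\) is in place, the argument is a one-line incidence computation, and the only subtlety is recording the uniqueness of \(L\), which is forced by the mutual oppositeness of the spread.
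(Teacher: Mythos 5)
Your proposal is correct and follows essentially the same route as the paper: both use Lemma \ref{lem:HermInSCH} to place \(\Lt(V)\) as a spread of \(q^3+1\) mutually opposite lines inside a split Cayley subhexagon, then combine Corollary \ref{cor:SCHBlocksInTT} (any line of \(\Lt\) meets that subhexagon) with Lemma \ref{lem:SpreadBlocksLines} (any line of the subhexagon meets the spread) to conclude every line of \(\Lt\) is at distance at most \(4\) from some line of \(\Lt(V)\). Your explicit case split \(x=y\) versus \(x\neq y\) and the uniqueness remark are harmless extra detail that the paper leaves implicit.
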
      
\begin{proof}
We have shown in Lemma \ref{lem:HermInSCH} that \(V\) is contained in a 6-dimensional subspace \(W\) which intersects \(\T\) in a split Cayley hexagon. By Lemma \ref{lem:SpreadBlocksLines} every line of this split Cayley hexagon intersects some line of \(\Lt(V)\). 
By Corollary \ref{cor:SCHBlocksInTT}, every line of \(\Lt\) intersects a line of \(\Lt(W)\). 
Combined, this implies that for any line of \(\Lt\) there is a line at distance 4 from it in \(\Lt(V)\).
\end{proof}

\begin{corollary}\label{cor:6supported}
\textbf{(6d)} Let \(W\) be a \(6\)-dimensional \(\Lt\)-supported subspace of \(\PG\).
Then, one of the following is true:
\begin{itemize}
	\item \(|\Lt(W)| = q^5+q^4+q+1\).
	More specifically, there is a point \(x\) in \(V\) such that \(\Lt(V) = \Lt_{\le 3}(x)\).
	The ideal points of \(W\) are precisely the points \(\Pt_{\le 2}(x)\).
	\item \(|\Lt(W)| = q^5+q^4+q^3+q^2+q+1\).
	More specifically, all lines of \(\Lt(W)\) (and the ideal points of \(W\)) form a split Cayley subhexagon. This split Cayley hexagon is regularly embedded in a subspace \(\mathsf{PG}(6,q)\) of \(W\). 
\end{itemize}
In either case, \(W\) does not contain any isolated points.
\end{corollary}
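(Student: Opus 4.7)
My plan is to reduce Corollary \ref{cor:6supported} to Lemma \ref{lem:6supportedInt}, whose statement is identical except for the additional hypothesis that \(W\) contains two intersecting lines of \(\Lt\). Thus the only work is to show that this hypothesis is automatic: every \(6\)-dimensional \(\Lt\)-supported subspace \(W\) contains two intersecting lines of \(\Lt\). I would argue this by contradiction, assuming that every pair of lines of \(\Lt(W)\) is disjoint in \(\PG\).

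By Lemma \ref{lem:SupSubspace}, we may write \(W = \langle U, L \rangle\) for some line \(L \in \Lt\) and some \(\Lt\)-supported subspace \(U \subseteq W\) of dimension \(4\) or \(5\). Since \(\dim W = 6\), the case \(\dim U = 4\) forces \(L\) to be disjoint from \(U\), and the case \(\dim U = 5\) forces \(L\) to meet \(U\) in a single point. I would then split into these two subcases.

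The case \(\dim U = 4\) is handled directly by Lemma \ref{lem:4supported}: in either structural outcome for an \(\Lt\)-supported \(4\)-space, the lines of \(\Lt(U)\) all pass through a common point, or at least share one such pencil. In particular \(\Lt(U)\) contains two intersecting lines, and these give two intersecting lines of \(\Lt(W)\), contradicting the standing assumption.

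The case \(\dim U = 5\) is where I expect the main work. Under the assumption that no two lines of \(\Lt(W)\) intersect, the \(5\)-space \(U\) in particular contains only pairwise disjoint lines of \(\Lt\), so by Corollary \ref{cor:Opp5Max}, \(\Lt(U)\) is a hermitian spread of \(q^3+1\) mutually opposite lines of \(\T\) sitting inside some split Cayley subhexagon. By Lemma \ref{lem:HermNoOpp}, the line \(L\) cannot be opposite every line of this spread, so there exists an \(M \in \Lt(U)\) with \(\delta(L,M) < 6\). Since \(L\) is assumed not to intersect \(M\), we must have \(\delta(L,M) = 4\), and applying Corollary \ref{cor:UniqueClosest} to the unique point of \(L\) closest to \(M\) produces a line \(N \in \Lt\) meeting both \(L\) and \(M\). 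As \(N\) contains a point of \(L \subseteq W\) and a point of \(M \subseteq U \subseteq W\), the line \(N\) lies inside \(W\), giving a line of \(\Lt(W)\) that meets \(L\). This contradicts the assumption and completes the reduction, after which Lemma \ref{lem:6supportedInt} delivers the full conclusion of the corollary, including the absence of isolated points.
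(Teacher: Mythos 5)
Your proof is correct and takes essentially the same approach as the paper: both reduce to Lemma \ref{lem:6supportedInt} by using Lemma \ref{lem:SupSubspace} to split into a $4$-dimensional case (settled by Lemma \ref{lem:4supported}) and a $5$-dimensional case (settled by Corollary \ref{cor:Opp5Max} together with Lemma \ref{lem:HermNoOpp}). The only cosmetic difference is the final step: where the paper extracts intersecting lines from the two non-opposite, non-intersecting lines by applying Lemma \ref{lem:3supported} to the solid they span, you directly exhibit the connecting line realising distance $4$ and observe that it joins two points of \(W\) and hence lies in \(\Lt(W)\) --- both are equally valid.
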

\begin{proof}
We use Lemma \ref{lem:SupSubspace}.
Either there is an \(\Lt\)-supported \(4\)-space and then Lemma \ref{lem:4supported} gives intersecting lines or there is an \(\Lt\)-supported \(5\)-space. If this \(\Lt\)-supported \(5\)-space does not contain intersecting lines, then Lemma \ref{lem:HermNoOpp} shows that \(W\) does contain non-opposite lines.

By Lemma \ref{lem:3supported}, the solid spanned by non-opposite non-intersecting lines contains two intersecting lines of \(\Lt\).

This shows that the condition of having two intersecting lines in Lemma \ref{lem:6supportedInt} is always satisfied. 
\end{proof}

\section{A line set \(\Lines\) satisfying (Pt), (Pl) and (Sd)}\label{sec:LineSet}

For this section we assume that \(\Lines\) is a set of lines in \(\PG\) which satisfies the properties (Pt), (Pl) and (Sd).

We first observe that the embedding is flat (the same argument appears in \cite[Lemma 1]{ThasVanMaldeghem2008}.)
\begin{lemma}\label{Lem:Flat}
	The \(q+1\) lines of \(\Lines\) through a fixed point \(x\) are coplanar.
\end{lemma}
\begin{proof}
	Suppose that there are three lines \(L_1\), \(L_2\) and \(L_3\) in \(\Lines\) which meet in a point \(x\) and such that \(\langle L_1, L_2,L_3\rangle\) is a solid. By (Pl), each of the planes \(\langle L_1,L_2\rangle\),  \(\langle L_1,L_3\rangle\) and  \(\langle L_2,L_3\rangle\) contains exactly \(q+1\) members of \(\Lines\). It follows that \(\langle L_1, L_2,L_3\rangle\) contains at least \(3q\) elements of \(\Lines\) which contradicts the property (Sd).
\end{proof}

\begin{lemma}\label{lem:noTriangles}
No three lines of \(\Lines\) form a triangle.
\end{lemma}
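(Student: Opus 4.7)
The plan is to argue by contradiction. Suppose three lines $L_1, L_2, L_3 \in \Lines$ pairwise meet in three distinct points, so that they span a common plane $\pi$. Since $\pi$ contains at least three lines of $\Lines$, property (Pl) forces $|\Lines(\pi)| = q+1$. At the vertex $x := L_1 \cap L_2$, property (Pt) gives $q+1$ lines of $\Lines$ through $x$; since $L_3 \subset \pi$ does not pass through $x$, at most $q$ of them can lie in $\pi$, so there must exist a line $M \in \Lines$ with $x \in M$ and $M \not\subset \pi$. Form the solid $\Sigma := \langle \pi, M \rangle$. It contains the $q+1$ lines of $\Lines$ in $\pi$ together with $M$, hence at least $q+2$ lines of $\Lines$; thus (Sd) pins $|\Lines(\Sigma)| = 2q+1$.

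The key construction is to introduce inside $\Sigma$ the two auxiliary planes $\pi_1 := \langle L_1, M \rangle$ and $\pi_2 := \langle L_2, M \rangle$. Each is indeed a plane because it is spanned by two distinct lines meeting at $x$; they are distinct because $\langle L_1, L_2, M \rangle = \Sigma$ is three-dimensional (as $M \not\subset \pi = \langle L_1, L_2 \rangle$), and therefore $\pi_1 \cap \pi_2 = M$. Both $\pi_1$ and $\pi_2$ carry at least two lines of $\Lines$, so by (Pl) each contains exactly $q+1$ lines of $\Lines$.

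I would then close via an inclusion-exclusion count. Since $\pi \cap \pi_i = L_i$ and the only line of $\Lines$ contained in the single line $L_i$ is $L_i$ itself, we have $|\Lines(\pi) \cup \Lines(\pi_i)| = (q+1) + (q+1) - 1 = 2q+1 = |\Lines(\Sigma)|$ for $i = 1, 2$. Hence $\Lines(\Sigma) = \Lines(\pi) \cup \Lines(\pi_1) = \Lines(\pi) \cup \Lines(\pi_2)$, so any line of $\Lines$ in $\Sigma \setminus \pi$ must lie in both $\pi_1$ and $\pi_2$, i.e., in the line $M$, and therefore must equal $M$. This forces at most one line of $\Lines$ in $\Sigma \setminus \pi$, contradicting the required count $(2q+1) - (q+1) = q \geq 2$.

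I do not anticipate any significant obstacle: the argument becomes essentially routine once the pair of auxiliary planes $\pi_1, \pi_2$ has been identified. The only subtle points are verifying that these planes are distinct and meet exactly along $M$, both of which reduce immediately to the fact that $M \not\subset \pi$.
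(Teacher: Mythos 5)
Your proof is correct: every step checks out (the triangle spans a plane \(\pi\), (Pl) gives \(|\Lines(\pi)|=q+1\), (Pt) produces a line \(M\not\subset\pi\) through a vertex, and the inclusion--exclusion in the solid \(\Sigma=\langle\pi,M\rangle\) yields the contradiction \(q\le 1\)). However, it is not the route the paper claims to take. The paper disposes of this lemma in one line, asserting that it ``follows directly from property (Pl) and (Pt)'' --- no solid, and in particular no use of (Sd). Your argument, by contrast, uses (Sd) essentially: it is what caps \(|\Lines(\Sigma)|\) at \(2q+1\) and makes the count close. Since (Sd) is a standing hypothesis of the section in which the lemma sits, this is perfectly legitimate, and arguably your version is the more solid one: from (Pl) and (Pt) alone one only gets propagation (every plane on two lines of \(\Lines\) acquires \(q+1\) of them, every covered point acquires \(q+1\)), and it is not evident how that alone terminates in a contradiction, so the paper's one-liner is doing some unstated work. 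Two small remarks on your write-up: the endgame can be shortened, since \(\pi\), \(\pi_1\), \(\pi_2\) pairwise share exactly one line of \(\Lines\) (namely \(L_1\), \(L_2\), \(M\)), so \(\Sigma\) already contains at least \((q+1)+q+(q-1)=3q>2q+1\) lines, contradicting (Sd) without the union argument; and your final inequality needs \(q\ge 2\), which is implicit throughout the paper (thickness of the hexagon) but worth flagging.
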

\begin{proof}
This follows directly from the embedding being flat (Lemma \ref{Lem:Flat}) and properties (Pl) and (Pt).
\end{proof}

\begin{lemma}\label{lem:NoQuad}
No four lines of \(\Lines\) form a quadrangle.
\end{lemma}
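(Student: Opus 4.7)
The plan is to suppose, for contradiction, that lines $L_1, L_2, L_3, L_4 \in \Lines$ form a quadrangle with distinct vertices $p_i := L_i \cap L_{i+1}$ (indices taken modulo $4$). The first step I would make is to observe, via Lemma \ref{lem:noTriangles}, that the ``non-adjacent'' pairs $\{L_1, L_3\}$ and $\{L_2, L_4\}$ must be skew: if, say, $L_1 \cap L_3 = \{d\}$, then one checks that $d \notin \{p_1, p_2\}$ (else two of the vertices would coincide), and $\{L_1, L_2, L_3\}$ would be a triangle with vertices $p_1, p_2, d$. Consequently $\Sigma := \langle L_1, L_3 \rangle$ is a solid, and $L_2, L_4$ both lie in $\Sigma$ since each of them joins a point of $L_1$ to a point of $L_3$.

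Next I would work inside $\Sigma$, using the bound $|\Lines(\Sigma)| \leq 2q+1$ from (Sd) as the driving input. Consider the two ``adjacent-side'' planes $\tau_{12} := \langle L_1, L_2 \rangle$ and $\tau_{34} := \langle L_3, L_4 \rangle$; by (Pl) each contains exactly $q+1$ lines of $\Lines$. Both planes contain the points $p_2$ and $p_4$, so they share the line $\ell := p_2 p_4$, and they are distinct (else all four lines would be coplanar, forcing $L_1$ to meet $L_3$). The (Sd) ceiling then forces $\ell \in \Lines$, because otherwise $\Lines(\tau_{12}) \cup \Lines(\tau_{34})$ would contain $2(q+1) = 2q+2$ distinct lines of $\Lines$ inside $\Sigma$; equality gives $\Lines(\Sigma) = \Lines(\tau_{12}) \cup \Lines(\tau_{34})$. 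Now bring in the ``diagonal'' plane $\tau_{14} := \langle L_1, L_4 \rangle$. It is a third plane of $\Sigma$, distinct from $\tau_{12}$ (as $p_3 \notin \tau_{12}$) and from $\tau_{34}$ (as $p_1 \notin \tau_{34}$), so by (Pl) it contains $q+1 \geq 3$ lines of $\Lines$. But every such line must belong to $\Lines(\tau_{12}) \cup \Lines(\tau_{34})$, hence lie inside either $\tau_{14} \cap \tau_{12} = L_1$ or $\tau_{14} \cap \tau_{34} = L_4$; this yields $|\Lines(\tau_{14})| \leq 2$, the desired contradiction.

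The main obstacle I anticipate is really only the very first step: one might worry about a planar quadrangle separately, but Lemma \ref{lem:noTriangles} already rules it out, since any four coplanar lines without a triangle must be concurrent and therefore cannot have four distinct intersection points. Once the configuration is reduced to a solid, the three-plane comparison inside $\Sigma$ is a clean counting argument, driven by the sharp $2q+1$ ceiling from (Sd) combined with the exact $q+1$ count for planes from (Pl).
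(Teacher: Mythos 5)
Your proof is correct, and it rests on the same two pillars as the paper's --- the exact count of \(q+1\) lines per plane from (Pl) and the \(2q+1\) ceiling for solids from (Sd), applied inside the solid spanned by the quadrangle --- but your choice of planes makes the route genuinely longer. The paper takes the two planes spanned by \emph{adjacent} sides, \(\langle L_1,L_2\rangle\) and \(\langle L_2,L_3\rangle\): since these share the side \(L_2\) itself, their line sets already account for \(2q+1\) distinct elements of \(\Lines\) in the solid, and the fourth side \(L_4\) (which lies in the solid, joining a point of \(L_1\) to a point of \(L_3\)) is an immediate \((2q+2)\)-nd line, contradicting (Sd) in one step with no case analysis. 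You instead take the planes on \emph{opposite} side-pairs, \(\langle L_1,L_2\rangle\) and \(\langle L_3,L_4\rangle\), which share only the diagonal \(\ell = p_2p_4\); this forces you to split on whether \(\ell\in\Lines\) and then to invoke a third plane \(\langle L_1,L_4\rangle\) to finish. Everything you write checks out (including the unstated facts that \(L_3\cap\tau_{12}=\{p_2\}\), so \(p_3\notin\tau_{12}\), and that the final step needs \(q+1\ge 3\), i.e.\ \(q\ge 2\), which holds as \(q\) is a prime power), so this is a valid alternative; it even produces the extra structural fact that \(\Lines(\Sigma)\) would have to be the union of two pencils through \(p_1\) and \(p_3\). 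As a side remark, once you know \(\ell\in\Lines\) you could finish faster: by Lemma \ref{lem:noTriangles} all lines of \(\Lines\) in \(\tau_{12}\) are concurrent at \(p_1\), so \(\ell\) would pass through \(p_1\) and \(p_2\), forcing \(\ell=L_2\) and hence \(p_4\in L_2\cap L_1=\{p_1\}\), collapsing two vertices --- but the third-plane argument you give works just as well.
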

\begin{proof}
Assume that the lines \(L,M,N\) and \(O\) form a quadrangle. By property (Pl) the plane through \(L\) and \(M\) must have \(q+1\) lines. Similarly, the plane through \(M\) and \(N\) must have \(q+1\) lines. This gives us \(2q+1\) different lines of \(\Lines\) in the solid spanned by \(L\) and \(N\). By property (Sd) this solid can have no other lines which contradicts the existence of \(O\).
\end{proof}

\begin{lemma}\label{lem:ThreeLines}
Let \(L,M\) and \(N\) be three lines of \(\Lines\) such that \(M\) intersects both \(L\) and \(N\) but \(L\) and \(N\) do not intersect. Then, the solid \(\Sigma = \langle L,N \rangle\) contains \(2q+1\) lines of \(\Lines\). 
\end{lemma}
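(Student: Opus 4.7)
The plan is to get the solid $\Sigma$ down to its planar components and use the concurrency forced by (Pl) together with the no-triangle property (Lemma \ref{lem:noTriangles}).

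First, let $x := L \cap M$ and $y := M \cap N$. Since $L$ and $N$ do not intersect, we have $x \neq y$; otherwise all three lines would pass through $x = y$ and in particular $L$ and $N$ would meet. Since $L$ and $N$ are two disjoint lines, $\Sigma = \langle L, N \rangle$ has dimension $3$. Set $\pi_1 := \langle L, M \rangle$ and $\pi_2 := \langle M, N \rangle$; these are planes sharing the line $M$, and both are contained in $\Sigma$. Because $\dim \pi_1 + \dim \pi_2 - \dim(\pi_1 \cap \pi_2) = 2+2-1 = 3$, we in fact have $\Sigma = \langle \pi_1, \pi_2\rangle$ and $\pi_1 \cap \pi_2 = M$.

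Next, I would apply (Pl) to each of $\pi_1$ and $\pi_2$. Since each plane already contains two lines of $\Lines$, it must contain exactly $q+1$ lines of $\Lines$. The key observation is that these $q+1$ lines are concurrent: any two lines in a projective plane meet, so if the lines were not all concurrent we could select three whose pairwise intersections are not all equal, yielding a triangle and contradicting Lemma \ref{lem:noTriangles}. Thus the $q+1$ lines of $\Lines$ in $\pi_1$ all pass through $x$ (the intersection of $L$ and $M$), and the $q+1$ lines of $\Lines$ in $\pi_2$ all pass through $y$.

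Combining, the lines of $\Lines$ in $\pi_1 \cup \pi_2$ consist of $M$, together with $q$ further lines through $x$ lying in $\pi_1 \setminus \pi_2$ and $q$ further lines through $y$ lying in $\pi_2 \setminus \pi_1$. These are pairwise distinct, giving at least $1 + q + q = 2q+1$ elements of $\Lines$ inside $\Sigma$. By (Sd), the solid $\Sigma$ contains either $0$, $1$, $q+1$ or $2q+1$ lines of $\Lines$, and since we have already exhibited $2q+1$, we conclude $|\Lines(\Sigma)| = 2q+1$.

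There is no real obstacle; the only point that deserves a line of justification is the concurrency of the $q+1$ lines in each of the planes $\pi_1$ and $\pi_2$, which is what forces the count to stop at $2q+1$ rather than grow further, and this follows immediately from (Pl) and the absence of triangles established in Lemma \ref{lem:noTriangles}.
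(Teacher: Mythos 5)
Your proof is correct and follows essentially the same route as the paper: decompose \(\Sigma\) into the planes \(\langle L,M\rangle\) and \(\langle M,N\rangle\), use (Pl) to get \(q+1\) concurrent lines in each (necessarily through \(x\) and \(y\) respectively), count \(2q+1\) distinct lines, and cap the total with (Sd). The only cosmetic difference is that you justify the concurrency by invoking Lemma \ref{lem:noTriangles}, whereas the paper cites (Pt) directly; since that lemma itself follows from (Pl) and (Pt), the two arguments coincide.
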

\begin{proof}
By property (Pl), the plane through \(L\) and \(M\) in \(\Sigma\) contains \(q+1\) lines in \(\Lines\) which by (Pt) and Lemma \ref{Lem:Flat}, all meet in the point \(x = L\cap M\). Similarly, the plane through \(M\) and \(N\) in \(\Sigma\) contains \(q+1\) lines in \(\Lines\) which all meet in the point \(y = M\cap N\).
Property (Sd) implies that \(\Sigma\) cannot contain any other lines of \(\Lines\).
\end{proof}

\begin{lemma}\label{lem:4dNoPenta}
If \(\Lines\) satisfies property (4d'), then no five lines form a pentagon.
\end{lemma}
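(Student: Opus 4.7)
The plan is to argue by contradiction. Suppose five lines $L_1,\dots,L_5$ of $\Lines$ form a pentagon, with vertices $x_i := L_i \cap L_{i+1}$ (indices mod $5$). I first want to show that the pentagon is confined to a $4$-space. Building the span incrementally: $\langle L_1,L_2\rangle$ is a plane; since $L_3$ meets this plane only at $x_2$ (else $L_3$ would meet $L_1$, violating non-adjacency in the pentagon), we get a solid $\langle L_1,L_2,L_3\rangle$; similarly, since $L_4$ meets this solid only at $x_3$, we obtain a $4$-space $W:=\langle L_1,L_2,L_3,L_4\rangle$. Finally, $L_5$ meets $W$ in the two distinct points $x_4\in L_4$ and $x_5\in L_1$, forcing $L_5\subseteq W$.

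Next, I would invoke (Pt), (Pl) and Lemma~\ref{lem:noTriangles} to pin down a strong flatness condition at each vertex: the plane $\pi_{x_i}:=\langle L_i,L_{i+1}\rangle$ already contains two lines of $\Lines$, so by (Pl) it contains exactly $q+1$ such lines, and the absence of triangles forces all of them through a common point, which must be $x_i$. Since (Pt) says there are exactly $q+1$ lines of $\Lines$ through $x_i$, these are precisely the $q+1$ lines of $\Lines$ in $\pi_{x_i}$. Consequently every line of $\Lines$ through a pentagon vertex lies in $W$, and counting with the obvious double-counting ($L_i$ lies in both $\pi_{x_{i-1}}$ and $\pi_{x_i}$) gives $5(q+1)-5=5q$ distinct lines of $\Lines$ inside $W$.

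Applying Lemma~\ref{lem:ThreeLines} to each consecutive triple $(L_{i-1},L_i,L_{i+1})$ would then give $5$ solids $\Sigma_i\subseteq W$, each containing exactly $2q+1$ lines of $\Lines$ (the $q+1$ in $\pi_{x_{i-1}}$ together with the $q+1$ in $\pi_{x_i}$). The plan is to leverage these $5$ solids, viewed as hyperplanes of $W$, together with (Sd) and the solid counts coming from pairs of extra lines through different vertices, to push the total number of lines of $\Lines$ in $W$ strictly above $q^2+2q+1$, contradicting (4d').

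The main obstacle is the final counting step. For $q=2$ the vertex count $5q=10$ already exceeds $(q+1)^2=9$, so the contradiction with (4d') is immediate. For $q\geq 3$ one has $5q\leq (q+1)^2$, so the bare bones count is not enough: one must extract further lines of $\Lines$ living in $W$. The natural source is the extra lines through a vertex $x_i$: for each such extra $M\in \pi_{x_i}\setminus\{L_i,L_{i+1}\}$ one applies Lemma~\ref{lem:ThreeLines} to triples like $(M,L_{i+1},L_{i+2})$ or $(M,L_i,L_{i-1})$ and tracks how the resulting $2q+1$-line solids of $W$ overlap, producing incidences between extras at different vertices through (Pt). Making this bookkeeping give at least $q^2-3q+2$ additional lines in $W$ is the crux of the argument.
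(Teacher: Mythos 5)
Your setup is correct (the pentagon spans a $4$-space, each vertex plane carries exactly $q+1$ lines of $\Lines$, Lemma~\ref{lem:ThreeLines} applies to consecutive triples), but the proof has a genuine gap, and you name it yourself: for $q\geq 3$ the count of $5q$ lines through the vertices falls short of $q^2+2q+1$, and the ``bookkeeping'' that is supposed to extract roughly $q^2$ further lines of $\Lines$ inside $W$ is never carried out. As stated, tracking how the $2q+1$-line solids through extras at different vertices overlap is not a proof but a plan, and it is not clear it can be pushed through in this form: the solids you propose to use share whole vertex planes of the pentagon, so their line sets overlap in $q+1$ lines at a time, and the naive inclusion-exclusion does not obviously clear the (4d') bound.

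The paper closes exactly this gap with a cleaner choice of solids. Fix the vertex $z=L_4\cap L_5$ and let $\Lines_\pi$ be the $q+1$ lines of $\Lines$ in the plane $\pi=\langle L_4,L_5\rangle$, all through $z$ by (Pt) and the absence of triangles. Each $M\in\Lines_\pi$ is disjoint from $L_2$ (a common point would create a quadrangle $M,L_2,L_3,L_4$ or a triangle), so $\langle L_2,M\rangle$ is a solid containing two lines of $\Lines$, hence at least $q+1$ lines by (Sd); for $M=L_4$ and $M=L_5$ Lemma~\ref{lem:ThreeLines} even gives $2q+1$. The crucial point is that any two of these $q+1$ solids meet exactly in the plane $\langle L_2,z\rangle$, which contains no line of $\Lines$ other than $L_2$, so the line counts add almost disjointly: the $4$-space $\langle L_1,\dots,L_5\rangle$ contains at least $2\cdot 2q+(q-1)q+1=q^2+3q+1$ lines of $\Lines$, contradicting (4d') for every $q$. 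So the missing idea is to build the pencil of solids on \emph{one} vertex pencil $\Lines_\pi$ paired against the opposite side $L_2$, rather than distributing the count over all five vertices; that is what converts (Sd) into the needed quadratic count.
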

\begin{proof}
Assume that there exists a pentagon \(\mathscr{P} := \{L_1,L_2,L_3,L_4,L_5\}\) in \(\Lines\). It follows from Lemma \ref{lem:noTriangles} that \(L_1,L_2\) and \(L_3\) span a solid \(\Sigma\).

We consider the solid through \(L_1,L_2\) and \(L_3\) and call this solid \(\Sigma\). 
If \(L_4\) or \(L_5\) were also in \(\Sigma\), we easily find a contradiction as in the proof of Lemma \ref{lem:NoQuad} so we may assume this is not the case. Let \(z\) be the intersection of \(L_4\) and \(L_5\). We have that \(z\not\in \Sigma\).
Therefore, the plane \(\pi\) though \(L_4\) and \(L_5\) meets \(\Sigma\) in a line \(M'\).
As this plane \(\pi\) already contains two lines in \(\Lines\), property (Pl) implies it must contain \(q+1\) lines in \(\Lines\). 
We denote this set of \(q+1\) lines by \(\Lines_\pi\). Note that all lines of \(\Lines_\pi\) contain \(z\) by (Pt) and Lemma \ref{Lem:Flat}.

We now consider all the solids spanned by \(L_2\) and a line in \(\Lines_\pi\).
Each two of these solids meet in the plane \(\pi'\) spanned by \(L_2\) and \(z\).
This plane \(\pi'\) only contains the line \(L_2\) of \(\Lines\) because it is in \(\langle L_2,L_4 \rangle\).

Clearly, each of these solids contains two, and thus, by (Sd), at least \(q+1\) lines of \(\Lines\).
For the solids generated by \(L_2\) and \(L_4\) and by \(L_2\) and \(L_5\) we can do even better, as Lemma \ref{lem:ThreeLines} implies that these solids must contain at least \(2q+1\) lines of \(\Lines\).
We find the following lower bound for the number of lines of \(\Lines\) in the 4-dimensional space spanned by \(\Sigma\) and \(z\):
\[\sum_{M \in \Lines_\pi} |(\Lines(\langle L_2, M \rangle)| - 1) + 1 \ge 2\cdot2q + (q-1)\cdot q + 1 = q^2+3q+1.\]

This contradicts property (4d').
\end{proof}

\section{Conclusion}

\begin{proof}[Proof of Theorem \ref{thm:Main1:SatisfiesProperties}]
	In Section \ref{sec:EmbeddedTProp}, we have shown that the lines \(\Lt\) of a naturally embedded twisted triality hexagon \(\TT\) in \(\PG\) satisfy the properties (Pt), (Pl), (Sd), (4d), (4d'), (5d), (6d) and (To).
\end{proof}

\begin{proof}[Proof of Theorem \ref{thm:Main1.5:4dIsTwistedTriality}]
	In Section \ref{sec:LineSet}, we have shown that any set of lines satisfying (Pt), (Pl) and (Sd) contain no triangles and no quadrangles.
	In Lemma \ref{lem:4dNoPenta}, we have shown that if this set also satisfies property (4d') we can have no pentagons.
	Using property (Pt) and the fact that the lines are full, we now count the number of lines in \(\Lines\).
	Starting from a point \(x\), there are \(q+1\) lines of \(\Lines\) through \(x\). Since there are no triangles, we find through each of the points on these lines, \(q\) more lines of \(\Lines\). This gives us \((q+1)q^4\) lines of \(\Lines\) at distance \(3\) from \(x\). Since there can be no quadrangles, we can also easily count the number of points on lines of \(\Lines\) at distance \(4\) from \(x\) to be \((q+1)q^7\). Finally since there are no pentagons, the number of lines of \(\Lines\) at distance \(5\) from \(x\) must be \((q+1)q^8\).
	This gives us a lower bound of 
	\[(q+1) + (q+1)q^4 + (q+1)q^8 = q^9+q^8+q^5+q^4+q+1\]
	for the number of lines in \(\Lines\).
	Property (To) implies we precisely meet this bound. It follows that all lines of \(\Lines\) are at distance at most \(5\) from \(x\). Since \(x\) was arbitrary, it follows that the maximal distance between any line of \(\Lines\) and any point on a line of \(\Lines\) is at most \(5\). 
	
	This implies that the maximum length of a path between two vertices in the incidence graph is \(6\). 
	Furthermore, by considering a point \(x\) and a line \(L\) at distance 5, we find a path \(x,L_1,y,L_2,z,L\). Let \(M\) be a line through \(x\), different from \(L_1\) and let \(x'\) be a point on \(M\) different from \(x\). Then, there is a path \(x',L_1',y',L_2',z',L\). Since there are no cycles of length 10 in this graph, the cycle \(x,L_1,y,L_2,z,L,z',L_2',y',L_1',x',M\) has length 12. Since we have an order \((q^3,q)\), Corollary \ref{cor:incidenceGraph} shows that it is a generalised hexagon.
	The order \((q^3,q)\) also indicates that this generalised hexagon is thick (see Remark \ref{rem:ThickGeneralisedHexagon}).
	
	This generalised hexagon is clearly fully embedded, since we started from a set of lines of \(\PG\), and is flatly embedded (Lemma \ref{Lem:Flat}). 

	We now use Theorem \ref{thm:KeyTheorem}(2).
	
	We conclude that the embedded hexagon is a regular embedding of a twisted triality hexagon \(\TT\) in \(\PG\).
\end{proof}

\paragraph{Acknowledgement}
The authors would like to thank the anonymous reviewer for their helpful suggestions.

\end{document}